\documentclass[11pt, oneside]{article}

\topmargin-.5in
\textheight9in
\oddsidemargin0in
\textwidth6.5in

\usepackage{epsfig}
\usepackage{tikz}
\usepackage{url}
\usepackage{amsmath,amstext,amssymb,amsfonts,amsthm}
\usepackage{mathrsfs}
\usepackage{xcolor}
\usepackage{mathtools}
\usepackage{graphicx}

\usetikzlibrary{fadings,shapes,arrows.meta,positioning}
\usepackage{hhline}

\usepackage{booktabs}
\usepackage{multicol}
\usepackage{multirow}
\usepackage[labelfont=bf]{caption}
\usepackage{subcaption}
\usepackage{siunitx}
\usepackage[title]{appendix}

\usepackage[authormarkup=none]{changes}
\definechangesauthor[name=reviewer0, color=red]{reviewer0}
\definechangesauthor[name=reviewer1, color=blue]{reviewer1}
\definechangesauthor[name=reviewer2, color=brown]{reviewer2}


\newtheorem{thm}{Theorem}[section]

\newtheorem{rem}{Remark}[section]

\newtheorem{exmp}{Example}

\newtheorem{coro}{Corollary}[section]
\numberwithin{equation}{section}
\numberwithin{figure}{section}

\newcommand{\jl}{[\![}
\newcommand{\jr}{]\!]}

\newcommand{\dd}{\mathrm{d}}
\newcommand{\ii}{\mathrm{i}}
\newcommand{\quand}{\quad \text{and} \quad}
\title{Stability and Time-Step Constraints of Exponential Time Differencing Runge--Kutta Discontinuous Galerkin Methods for Advection-Diffusion Equations}
\author{Ziyao Xu\footnote{Department of Applied and Computational Mathematics and Statistics,
University of Notre Dame, Notre Dame, IN 46556, USA. E-mail: zxu25@nd.edu
}, Zheng Sun\footnote{Department of Mathemtatics, The University of Alabama, Tuscaloosa, AL 35487, USA. E-mail: zsun30@ua.edu. Research of Z. Sun
is partially supported by NSF DMS-2208391.}, 
and Yong-Tao Zhang\footnote{Department of Applied and Computational Mathematics and Statistics,
University of Notre Dame, Notre Dame, IN 46556, USA. E-mail: yzhang10@nd.edu. Research of Y.-T. Zhang is partially supported by Simons Foundation MPS-TSM-00007854.
}}
\date{}

\begin{document}

\maketitle

\textbf{Abstract.} 
In this paper, we investigate the stability and time-step constraints for solving advection-diffusion equations using exponential time differencing (ETD) Runge–Kutta (RK) methods in time and discontinuous Galerkin (DG) methods in space. 
We demonstrate that the resulting fully discrete scheme is stable when the time-step size is upper bounded by a constant. 
More specifically, when central fluxes are used for the advection term, the schemes are stable under the time-step constraint $\tau \leq \tau_0 {d}/{a^2}$, while when upwind fluxes are used, the schemes are stable if $\tau \leq \max\left\{\tau_0 {d}/{a^2}, c_0 h/a\right\}$.
Here, $\tau$ is the time-step size, $h$ is the spatial mesh size, and $a$ and $d$ are constants for the advection and diffusion coefficients, respectively. 
The constant $c_0$ is the CFL constant for the explicit RK method for the purely advection equation, and $\tau_0$ is a constant that depends on the order of the ETD-RK method. 
These stability conditions are consistent with those of the implicit-explicit RKDG method. 
The time-step constraints are rigorously proved for the lowest-order case and are validated through Fourier analysis for higher-order cases. 
Notably, the constant $\tau_0$ in the fully discrete ETD-RKDG schemes appears to be determined by the stability condition of their semidiscrete (continuous in space, discrete in time) ETD-RK counterparts and is insensitive to the polynomial degree and the specific choice of the DG method.
Numerical examples, including problems with nonlinear convection in one and two dimensions, are provided to validate our findings.

\bigskip
\bigskip

\noindent{\bf Key Words:}
Exponential time differencing Runge--Kutta methods;
Discontinuous Galerkin methods;
Stability;
Advection-diffusion equations

\section{Introduction}

In this paper, we study the stability and time-step constraints of discontinuous Galerkin (DG) methods combined with exponential time differencing (ETD) Runge--Kutta (RK) methods for the linear advection-diffusion equation:
\begin{equation}\label{eq:AdvDiffEq}
u_t+au_x=du_{xx}
\end{equation}
with the periodic boundary condition.
Here, $a, d$ are constants representing advection and diffusion coefficients, respectively. We require $d > 0$ and, without loss of generality, assume $a>0$. 

The DG method is a class of finite element methods that employs discontinuous piecewise polynomial spaces. It was first proposed by Reed and Hill in the 1970s for solving steady-state transport equations \cite{reed1973triangular} and was later extended to other types of equations, including hyperbolic conservation laws \cite{cockburn2001runge}, elliptic equations \cite{arnold2002unified}, and equations involving higher-order derivatives \cite{cheng2008discontinuous,xu2010local}. The DG method offers several advantages, such as high-order accuracy, preservation of local conservation, flexibility in handling complex geometries, ease of implementing $h$-$p$ adaptivity, and high parallel efficiency. These features have made the method widely applicable across various fields \cite{cockburn2012discontinuous}. There are many different versions of the DG method, varying in formulation and choice of numerical flux. For the advection term, we consider DG methods with upwind numerical flux and central flux in this paper. For the diffusion term, we use the local DG (LDG) method \cite{shu2009discontinuous} and the interior penalty DG (IPDG) method for discretization \cite{riviere2008discontinuous}.

When solving time-dependent and convection-dominated problems, the DG method is typically coupled with explicit RK methods for time marching. For purely convection problems, the resulting fully discrete schemes are generally stable under the standard Courant--Friedrichs--Lewy (CFL) condition \(\tau \leq C h / a\) \cite{cockburn2001runge,sun2019strong,xu20192}, where $\tau$ is the time-step size, $h$ is the spatial mesh size, and $a$ is the wave speed. However, for problems with strong second-order diffusion terms, explicit RK methods suffer from a restrictive time-step constraint $\tau \leq C h^2 / d$ and are therefore less favorable. To circumvent this time-step restriction while avoiding a fully nonlinear implicit scheme, a popular approach is to use implicit-explicit (IMEX) RK methods \cite{ascher1997implicit,kennedy2003additive} for time marching, treating the convection term explicitly and the diffusion term implicitly. 
It has been proven that, for advection-diffusion equations, the fully discrete method is stable when the time-step size is bounded above by a constant $\tau\leq C{d}/{a^2}$
\cite{wang2015stability,wang2023uniform}. Here, \(C\) is a constant independent of \(a\), \(d\), and \(h\), but it may depend on the specific IMEX method, the polynomial degree \(k\), and the mesh regularity parameter, etc. 
We refer to \cite{tan2021stability} for similar discussions involving finite difference spatial discretization and to \cite{hunter2024stability,wang2024analysis} for studies on IMEX methods applied to advection-dispersion equations.

An alternative to IMEX time-marching methods is the use of exponential integrators. These methods employ an exponential integrating factor to handle the dominant linear stiff term and remove the severe time-step size restriction. We refer to \cite{hochbruck2010exponential} for a general review, to \cite{luan2013exponential,luan2014explicit,isherwood2018strong,huang2018bound,liao2024average} for several later works, and to \cite{jiang2016krylov,lu2016krylov,xu2024high} for applications to advection-diffusion equations. In this paper, we are particularly interested in the ETD methods and especially the ETD-RK methods. The ETD methods were first introduced for computational electrodynamics and were then systematically developed in \cite{Beylk,cox2002exponential}, with their stability analyzed for ordinary differential equations \cite{cox2002exponential}, diffusion-reaction equations \cite{du2004stability}, and gradient flow problems \cite{fu2022energy,cao2024exponential}. These methods offer advantages such as relatively small numerical errors, good steady-state preservation properties, and the ability to preserve the maximum principle, among others. Due to these advantages, ETD methods have been widely applied in various fields, particularly in phase field models and gradient flow problems, for preserving the energy decay law \cite{ju2018energy,fu2022energy,fu2024higher,cao2024exponential} and the maximum bound principle \cite{du2019maximum,du2021maximum}.

In this paper, we use the Fourier method, also known as the von Neumann analysis, to analyze the ETD-RKDG methods for the advection-diffusion equation \eqref{eq:AdvDiffEq}. 
We find that a similar time-step constraint applies to the ETD-RKDG method as to the IMEX-RKDG method. In particular, using central flux for the advection term, we find that the ETD-RKDG methods are stable under the time-step constraint
\begin{equation}\label{eq:CFLcentral}
\tau \leq \tau_0 \frac{d}{a^2},
\end{equation}
where \(\tau_0 > 0\) is a constant that depends on the order of the {ETD-RK} temporal discretization. 
The values of \(\tau_0\) for the ETD-RK1 to ETD-RK4 methods are provided either analytically or numerically (accurate to two decimal places) in Table \ref{tab:tau_0}. 
If we instead choose upwind flux for the advection term, the usual CFL conditions for advection equations also apply:
\begin{equation}\label{eq:CFLupwind}
\tau \leq \max\left\{\tau_0 \frac{d}{a^2}, c_0 \frac{h}{a}\right\},
\end{equation}
where $c_0 > 0$ is the standard CFL constant for explicit RKDG methods applied to the purely advection equation \cite{cockburn2001runge}. The time-step constraints \eqref{eq:CFLcentral} and \eqref{eq:CFLupwind} are rigorously proved for the lowest-order case and are verified numerically for higher-order cases with the Fourier method. 
More importantly, for the methods tested in this paper, \emph{the constant \(\tau_0\) appears to be determined by the semidiscrete (continuous in space, discrete in time) ETD-RK formulations discussed in Section \ref{sec:semi-ETD} and is therefore insensitive to the DG spatial discretization.}  
Similar behavior is observed for the time-step constraints of the IMEX-RK methods, as detailed in Appendix \ref{app:IMEX}. 
Notably, the insensitivity of \(\tau_0\) to the polynomial degree stands in sharp contrast to explicit RKDG methods for purely advection equations, where the CFL constant \(c_0\) decreases significantly with increasing polynomial order \cite{cockburn2001runge}.

The rest of the paper is organized as follows.
We begin by presenting the preliminaries on the DG methods, the ETD-RK methods, and the dimensionless form of \eqref{eq:AdvDiffEq} in Section \ref{sec:Preliminaries}.
In Section \ref{sec:LowestOrder}, we use the Fourier method to analyze the stability of the lowest-order discretization for the advection-diffusion equation. These results provide a prototype for the time-step constraints needed for stability.
In Section \ref{sec:HigherOrder}, we study the stability of high-order ETD-RK time-stepping methods combined with high-order DG discretizations.
Numerical experiments are presented in Section \ref{sec:Numerics} to validate the analysis and highlight the stability advantages of ETD-RK methods in time marching.
Finally, we conclude with closing remarks in Section \ref{sec:Remarks}.
Appendix A presents a similar discussion on IMEX-RKDG methods.

\section{Preliminaries}\label{sec:Preliminaries}

In this section, we first provide a brief review of the DG and ETD-RK methods. Then we derive the dimensionless form of the linear advection-diffusion equation, which would simplify the analyses on stability and time-step constraints in later sections.  

\subsection{DG methods}\label{sec:Preliminaries:DG}

Consider the computational domain $\Omega=[0,2\pi]$ with partition $0=x_{\frac12}<x_{\frac32}<\cdots<x_{N+\frac12}=2\pi$.
We denote by $I_j=(x_{j-\frac12},x_{j+\frac12})$ the $j$th cell, with the cell center $x_j=\frac12(x_{j-\frac12}+x_{j+\frac12})$ and the cell length $\Delta x_{j}=x_{j+\frac12}-x_{j-\frac12}$.
If the grid is uniform, which is assumed throughout the analysis, we have $x_{j+\frac12}=jh$ for $j=0,1,\ldots, N$, where $h=\frac{2\pi}{N}$ is the grid size.

The DG space $\mathbb{V}_h^k$ is define on the grid as follows:
\begin{equation*}
\mathbb{V}_{h}^{k}=\{v_h\in L^2(\Omega):v_h|_{I_j}\in\mathbb{P}^k(I_j), 1\leq j\leq N\},
\end{equation*}
where $\mathbb{P}^k(I_j)$ is the space of polynomials of degree at most $k$ on the interval $I_j$.
To facilitate the DG formulation, we adopt the notations \cite{riviere2008discontinuous}
\begin{align*}
    \{v_h\}_{j+\frac12}=\frac 12 (v_{h,j+\frac12}^{-}+v_{h,j+\frac12}^{+}), \quad \jl v_h\jr_{j+\frac12}=v_{h,j+\frac12}^{-}-v_{h,j+\frac12}^{+}, \quad \text{ for } v_h\in \mathbb{V}_{h}^{k},
\end{align*}
where $v_{h,j+\frac12}^{\pm}=\lim_{\gamma\rightarrow 0^+}v_h(x_{j+\frac12}\pm\gamma)$ represents the left or right limit of $v_h$ at $x_{j+\frac12}$.

\begin{itemize}
\item For the advection equation:
\begin{equation*}
u_t+au_x=0, \quad x\in\Omega,
\end{equation*}
the semidiscrete DG formulation is given as follows \cite{shu2009discontinuous}: Find $u_h(t)\in \mathbb{V}_{h}^{k}$, such that,
\begin{equation}\label{eq:DG}
\int_{I_j}(u_h)_t v_h\dd x-\int_{I_j}au_h(v_h)_x\dd x+a\hat{u}_{j+\frac12}v_{h,j+\frac12}^{-}-a\hat{u}_{j-\frac12}v_{h,j-\frac12}^{+}=0\quad \forall v_h\in\mathbb{P}^k(I_j),
\end{equation}
for $j=1,2,\ldots,N$, where $\hat{u}_{j+\frac12}$ is the numerical flux defined on cell interfaces.
In this paper, we consider both the central flux
\begin{equation}\label{eq:CentralFlux}
\hat{u}_{j+\frac12}=\{u_h\}_{j+\frac12}
\end{equation}
and the upwind flux
\begin{equation}\label{eq:UpwindFlux}
\hat{u}_{j+\frac12}=u_{h,j+\frac12}^{-}.
\end{equation}

We denote the resulting linear system by $\mathbf{u}_t+A\mathbf{u}=\mathbf{0}$, where $\mathbf{u}$ is the vector of degrees of freedom (DoFs) of the numerical solution $u_h$, which will be specified in later sections.

\item For the diffusion equation:
\begin{equation*}
u_t=du_{xx},\quad x\in\Omega,
\end{equation*}
the semidiscrete LDG method is formulated as follows \cite{shu2009discontinuous}: Find $u_h, p_h\in \mathbb{V}_{h}^{k}$, such that,
\begin{subequations}\label{eq:LDG}
\begin{align}
\int_{I_j}(u_h)_tv_h\dd x=&\sqrt{d}\left(-\int_{I_j}p_h(v_h)_x\dd x+\hat{p}_{j+\frac12}v_{h,j+\frac12}^{-}-\hat{p}_{j-\frac12}v_{h,j-\frac12}^{+}\right)\quad \forall v_h\in\mathbb{P}^{k}(I_j),\\
\int_{I_j}p_hq_h\dd x=&\sqrt{d}\left(-\int_{I_j}u_h(q_h)_x\dd x+\check{u}_{j+\frac12}q_{h,j+\frac12}^{-} -\check{u}_{j-\frac12}q_{h,j-\frac12}^{+}\right)\quad \forall q_h\in\mathbb{P}^{k}(I_j),
\end{align}
\end{subequations}
for $j=1,2\ldots,N$, where $\check{u}_{j+\frac12}$ and $\hat{p}_{j+\frac12}$ are numerical fluxes defined on cell interfaces. 
For example, the alternating flux is given by
\begin{equation}\label{eq:LDGAltFlux}
\check{u}_{j+\frac12}=u_{h,j+\frac12}^{-},\quad \hat{p}_{j+\frac12}=p_{h,j+\frac12}^{+},
\end{equation}
and the central flux is defined as
\begin{equation*}
\check{u}_{j+\frac12}=\{u_h\}_{j+\frac12},\quad\hat{p}_{j+\frac12}=\{p_h\}_{j+\frac12}.
\end{equation*}
The semidiscrete IPDG methods is formulated as follows: Find $u_h\in \mathbb{V}_{h}^{k}$, such that,
\begin{equation}\label{eq:IPDG}
\begin{split}
\sum_{j=1}^{N}\int_{I_j}(u_h)_t v_h\dd x = & -\sum_{j=1}^{N}\int_{I_j} d (u_h)_x (v_h)_x \dd x+\sum_{j=1}^{N}d\{(u_h)_x\}_{j+\frac12}\jl v_h\jr_{j+\frac12}\\
&+\epsilon\sum_{j=1}^{N}d \jl u_h\jr_{j+\frac12}\{(v_h)_x\}_{j+\frac12} - \sum_{j=1}^{N}\frac{\sigma}{h}\jl u_h\jr_{j+\frac12}\jl v_h\jr_{j+\frac12} \ \forall v_h\in \mathbb{V}_{h}^{k}, 
\end{split}
\end{equation}
where $\epsilon=1,-1,0$ corresponds to the symmetric interior penalty Galerkin (SIPG), nonsymmetric interior penalty Galerkin (NIPG), and incomplete interior penalty Galerkin (IIPG) methods, respectively, and $\sigma>0$ is the penalty parameter needed in SIPG and IIPG \cite{riviere2008discontinuous}.

We denote the resulting linear system by $\mathbf{u}_t=D\mathbf{u}$, where $\mathbf{u}$ is the vector of DoFs of the numerical solution $u_h$. 
Note that lifting techniques are required in the LDG method to eliminate the DoFs of the auxiliary variable $p$ \cite{arnold2002unified}.

\item For the advection-diffusion equation \eqref{eq:AdvDiffEq}, we combine the discretization for the advection and diffusion terms to obtain the linear system:
\begin{equation*}
\mathbf{u}_t+A\mathbf{u}=D\mathbf{u}.
\end{equation*}

\end{itemize}

\subsection{ETD-RK methods}\label{sec:Preliminaries:ETD}
We consider the ODE system in the following form: 
\begin{equation}\label{eq:ODE}
\mathbf{u}_t=D\mathbf{u}+F(\mathbf{u}).
\end{equation}
Here $D$ represents the linear stiff term from the discretization of the diffusion term. $F$ is the non-stiff term from the discretization of the advection term, which, for the problem \eqref{eq:AdvDiffEq} we consider, is given by $F(\mathbf{u})=-A\mathbf{u}$.
Since we also test nonlinear convection-diffusion equations in the numerical section, we use $F(\mathbf{u})$ for generality.

Multiplying \eqref{eq:ODE} with the integrating factor $e^{-Dt}$ to absorb the stiff term, and then integrating over time from $t^n$ to $t^{n+1}$, we obtain its Volterra integral equation
\begin{equation}\label{eq:ETD_ODEs2}
\mathbf{u}(t^{n+1})=e^{\tau D}\mathbf{u}(t^n)+\int_{0}^{\tau}e^{(\tau-s)D} F(\mathbf{u}(t^n+s)) \dd s,
\end{equation}
where $\tau=t^{n+1}-t^{n}$ is the time-step size. This representation of the exact solution is also called
\emph{variation-of-constants formula} \cite{hochbruck2010exponential}.

The first-order ETD method (ETD-RK1) is obtained by the approximation $F(\mathbf{u}(t^n+s))\approx F(\mathbf{u}^n)$ in the integrand of \eqref{eq:ETD_ODEs2}: 
\begin{equation}\label{eq:ETDRK1}
\begin{split}
\mathbf{u}^{n+1}=&e^{\tau D}\mathbf{u}^{n} + D^{-1}\left(e^{\tau D}-I\right)F(\mathbf{u}^n),
\end{split}
\end{equation}
where $\mathbf{u}^n$ (or $\mathbf{u}^{n+1}$) is the approximation of the solution $\mathbf{u}(t^n)$ (or $\mathbf{u}(t^{n+1})$).

Following a similar approach to traditional RK methods, the higher-order ETD-RK methods are formulated as follows \cite{cox2002exponential}: 
\begin{itemize}
   \item ETD-RK2 (second-order):
   \begin{equation}\label{eq:ETDRK2}
   \begin{split}
       \mathbf{a}^n=&\mathbf{u}^n+\tau \varphi_1 (\tau D) (D\mathbf{u}^{n}+F(\mathbf{u}^n)),\\
       \mathbf{u}^{n+1}=&\mathbf{a}^n+\tau\varphi_{2}(\tau D)(-F(\mathbf{u}^n)+F(\mathbf{a}^n)),
   \end{split}
   \end{equation}
    \item ETD-RK3 (third-order):
    \begin{equation}\label{eq:ETDRK3}
    \begin{split}
        \mathbf{a}^n=&\mathbf{u}^n+\frac{\tau}{2}\varphi_1(\frac{\tau}{2}D)(D\mathbf{u}^n+F(\mathbf{u}^n)),\\
        \mathbf{b}^n=&\mathbf{u}^n+\tau\varphi_{1}(\tau D)(D\mathbf{u}^n-F(\mathbf{u}^n)+2F(\mathbf{a}^n)),\\
        \mathbf{u}^{n+1}=&\mathbf{u}^n+\tau\varphi_{1}(\tau D)(D\mathbf{u}^n+F(\mathbf{u}^n))\\
        &+\tau\varphi_{2}(\tau D)(-3F(\mathbf{u}^n)+4F(\mathbf{a}^n)-F(\mathbf{b}^n))\\
        &+\tau\varphi_{3}(\tau D)(4F(\mathbf{u}^n)-8F(\mathbf{a}^n)+4F(\mathbf{b}^n)),
    \end{split}
    \end{equation}
    \item ETD-RK4 (fourth-order):
    \begin{equation}\label{eq:ETDRK4}
    \begin{split}
        \mathbf{a}^n=&\mathbf{u}^n+\frac{\tau}{2}\varphi_{1}(\frac{\tau}{2}D)(D\mathbf{u}^n+F(\mathbf{u}^n)),\\
        \mathbf{b}^n=&\mathbf{u}^{n}+\frac{\tau}{2}\varphi_{1}(\frac{\tau}{2} D)(D\mathbf{u}^n+F(\mathbf{a}^n)),\\
        \mathbf{c}^n=&\mathbf{a}^n+\frac{\tau}{2}\varphi_{1}(\frac{\tau}{2} D)(D\mathbf{a}^n-F(\mathbf{u}^n)+2F(\mathbf{b}^n)),\\
        \mathbf{u}^{n+1}=&\mathbf{u}^n+\tau\varphi_{1}(\tau D)(D\mathbf{u}^n+F(\mathbf{u}^n))\\
        &+\tau\varphi_{2}(\tau D)(-3F(\mathbf{u}^{n})+2F(\mathbf{a}^n)+2F(\mathbf{b}^n)-F(\mathbf{c}^n))\\
        &+\tau\varphi_{3}(\tau D)(4F(\mathbf{u}^n)-4F(\mathbf{a}^n)-4F(\mathbf{b}^n)+4F(\mathbf{c}^n)),
    \end{split}
    \end{equation}
\end{itemize}
where the $\varphi$-functions are defined as \cite{hochbruck2010exponential}:
\begin{equation*}
\varphi_1(z)=\frac{e^z-1}{z},\quad \varphi_2(z)=\frac{e^z-1-z}{z^2},\quad \varphi_3(z)=\frac{e^{z}-1-z-\frac12 z^2}{z^3}.
\end{equation*}

\subsection{Dimensionless form}\label{sec:dimless}
The nondimensionalized analysis can significantly simplify the study of time-step constraints in later sections.
By the change of variables
\begin{equation*}
t'=\frac{a^2}{d}t\quand
x'=\frac{a}{d}x,
\end{equation*}
the advection-diffusion equation \eqref{eq:AdvDiffEq} can be transformed into its dimensionless form:
\begin{equation}\label{eq:AdvDiffEq_Dimless}
u_{t'}+u_{x'}=u_{x'x'}.
\end{equation}
If a numerical scheme for \eqref{eq:AdvDiffEq_Dimless} is stable under the time-step constraints
\begin{equation*}
\tau'\leq\tau_0\quad \text{or} \quad \tau'\leq c_0 h',
\end{equation*}
where $\tau'$ and $h'$ are dimensionless temporal and spatial grid sizes, then the scheme for the original equation \eqref{eq:AdvDiffEq} is stable under the constraints 
\begin{equation*}
\tau\leq\tau_0\frac{d}{a^2} \quad\text{or}\quad \tau\leq c_0\frac{h}{a},
\end{equation*}
respectively.
For simplicity, we will conduct all analysis using the dimensionless form \eqref{eq:AdvDiffEq_Dimless} and obtain the time-step constraints for the physical variables through the variable transformations.
However, the analysis can also be applied directly to the general coefficient equation \eqref{eq:AdvDiffEq}.
For conciseness, we will omit the prime notation from the dimensionless variables in the analysis.

\section{Stability of the lowest-order case}\label{sec:LowestOrder}
In this section, we assume uniform mesh partitions, which allows us to use the Fourier method for stability analysis. The lowest-order ($\mathbb{P}^{0}$ elements) DG discretization coincides with commonly used finite difference schemes. 
With the choice of central flux \eqref{eq:CentralFlux} in the DG method \eqref{eq:DG}, and alternating flux \eqref{eq:LDGAltFlux} in the LDG method \eqref{eq:LDG} or $\sigma=d$ in the IPDG method \eqref{eq:IPDG}, the semidiscrete scheme for \eqref{eq:AdvDiffEq_Dimless} is given by
\begin{equation}\label{eq:FD_central}
\frac{\dd u_{j}}{\dd t} + \frac{u_{j+1}-u_{j-1}}{2h} = \frac{u_{j+1}-2u_{j}+u_{j-1}}{h^2}.
\end{equation}
Alternatively, if we choose upwind flux \eqref{eq:UpwindFlux} in the DG method \eqref{eq:DG}, the semidiscrete scheme for \eqref{eq:AdvDiffEq_Dimless} becomes
\begin{equation}\label{eq:FD_upwind}
\frac{\dd u_j}{\dd t}+\frac{u_{j}-u_{j-1}}{h}=\frac{u_{j+1}-2u_j+u_{j-1}}{h^2}.
\end{equation}
In this section, we analyze the time-step constraints of \eqref{eq:FD_central} and \eqref{eq:FD_upwind} when discretized with the lowest-order ETD-RK method \eqref{eq:ETDRK1}.
The results provide a prototype for the time-step constraints needed for stability of ETD-RKDG methods.

\subsection{Central scheme with ETD-RK1 discretization}\label{sec:LowestOrder_central}

It is well-known that when discretized with the forward Euler method, the central difference scheme for the advection equation is unstable under the usual CFL conditions.
However, we shall show that the ETD-RK1 method for \eqref{eq:FD_central}, which treats the advection term explicitly, is stable under the time-step constraint $\tau\leq2$.

Let $\mathbf{u}^m = \begin{bmatrix}u_1^m, u_2^m, \cdots, u_{N}^m\end{bmatrix}^T$, $m = n,n+1$. 
Applying \eqref{eq:ETDRK1} for \eqref{eq:FD_central}, we obtain
\begin{equation}\label{eq:ETDRK1-FD}
\mathbf{u}^{n+1}=e^{\tau D}\mathbf{u}^n-D^{-1}\left(e^{\tau D}-I\right)A\mathbf{u}^n,
\end{equation}
where
\begin{equation}\label{eq:ETD-CentralFD-D}
D=\frac{1}{h^2}\begin{bmatrix}
-2 & 1 & 0 & 0 & \cdots & 0 & 1\\
1 & -2 & 1 & 0 & \cdots & 0 & 0\\
0 & 1 & -2 & 1 & \cdots & 0 & 0 \\
& & & \ddots & & & \\
1 & 0 & 0 & 0 & \cdots & 1 & -2\\
\end{bmatrix}
\end{equation}
and 
\begin{equation}\label{eq:ETD-CentralFD-A}
A=\frac{1}{2h}\begin{bmatrix}
0 & 1 & 0 & 0 & \cdots & 0 & -1\\
-1 & 0 & 1 & 0 & \cdots & 0 & 0\\
0 & -1 & 0 & 1 & \cdots & 0 & 0 \\
& & & \ddots & & & \\
1 & 0 & 0 & 0 & \cdots & -1 & 0\\
\end{bmatrix}.
\end{equation}

We analyze the stability of the scheme \eqref{eq:ETDRK1-FD}-\eqref{eq:ETD-CentralFD-A} using the Fourier method.
Consider a Fourier mode $u^{m}_{j}=\widehat{u}^m e^{\ii \omega jh}$, where $\ii=\sqrt{-1}$, $j=1,\ldots,N$, $m = n,n+1$, and $\omega=-\frac{N}{2}+1,\ldots,\frac{N}{2}$. In this case, we have 
\begin{equation}\label{eq:Fansatz}
    \mathbf{u}^m = \widehat{u}^m \begin{bmatrix}e^{\ii \omega h}, e^{\ii 2\omega h},\cdots,e^{\ii N\omega h}\end{bmatrix}^T,\quad  m = n, n+1.
\end{equation}
It is straightforward to verify that
\begin{equation*}
D\mathbf{u}^n=-\frac{4}{h^2}\sin^2(\frac{\omega h}{2})\mathbf{u}^n \quand A\mathbf{u}^n=\frac{2\ii}{h}\sin(\frac{\omega h}{2})\cos(\frac{\omega h}{2})\mathbf{u}^n,
\end{equation*}
meaning that $\mathbf{u}^n$ is an eigenvector of both $A$ and $D$. Recall that $D\mathbf{u}^n = \lambda \mathbf{u}^n$ implies $e^{\tau D}\mathbf{u}^n = e^{\tau \lambda}\mathbf{u}^n$ and $D^{-1}(e^{\tau D}-I)\mathbf{u}^n = \lambda^{-1}(e^{\tau \lambda}-1)\mathbf{u}^n$. 
Therefore, the scheme \eqref{eq:ETDRK1-FD} with the Fourier ansatz \eqref{eq:Fansatz} can be written as: 
\begin{equation}\label{eq:G_factor}
\mathbf{u}^{n+1}=\widehat{G}(\tau,h,\omega)\mathbf{u}^n.
\end{equation}
Here 
\begin{equation*}
\widehat{G}(\tau,h,\omega)=e^{-\frac{4\tau}{h^2}\sin^2(\frac{\omega h}{2})}+\ii\frac{h}{2}\cot(\frac{\omega h}{2})\left(e^{-\frac{4\tau}{h^2}\sin^2(\frac{\omega h}{2})}-1\right)
\end{equation*}
is the growth factor of the Fourier mode. 
Equivalently, we have
\begin{equation*}
\widehat{u}^{n+1}=\widehat{G}(\tau,h,\omega)\widehat{u}^n. 
\end{equation*}
As a sufficient condition, the scheme is stable if $|\widehat{G}(\tau,h,\omega)|\leq 1$ for all $\omega$.

The stability and time-step constraint are stated in the following theorem. The algebra in the proof is tedious but straightforward, and can easily be verified using symbolic computation software. Here, we present only the key computational results in the proof, omitting the intermediate steps.
\begin{thm}\label{thm:1}
The scheme \eqref{eq:ETDRK1-FD}-\eqref{eq:ETD-CentralFD-A} is stable under the time-step constraint $\tau\leq2$, with the growth factor $|\widehat{G}(\tau,h,\omega)|\leq 1$. 
\end{thm}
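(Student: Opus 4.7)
The plan is to show $|\widehat{G}(\tau,h,\omega)|\leq 1$ for every Fourier wavenumber $\omega$ under the condition $\tau\leq 2$, working directly from the closed-form expression just derived. To streamline the algebra I would first introduce the shorthand $s=\sin(\omega h/2)$, $c=\cos(\omega h/2)$, $\beta=4\tau s^2/h^2$, $\mu=e^{-\beta}\in(0,1]$, and $\alpha=(h/2)\cot(\omega h/2)=hc/(2s)$, so that $\widehat{G}=\mu+\ii\alpha(\mu-1)$ is a sum of a real and a purely imaginary part and
\[|\widehat{G}|^2=\mu^2+\alpha^2(\mu-1)^2.\]
Two degenerate cases are disposed of immediately: when $s=0$, both the exponential and the advection contributions collapse to give $|\widehat{G}|=1$ in the limit; when $c=0$, the advection term vanishes outright and $|\widehat{G}|=\mu\leq 1$. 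It therefore suffices to treat the regime $s,c\neq 0$.

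Next I would rewrite the stability requirement $|\widehat{G}|^2\leq 1$ as
\[(\mu-1)\bigl[(\mu+1)+\alpha^2(\mu-1)\bigr]\leq 0.\]
Since $\mu\leq 1$ makes the first factor nonpositive, the problem reduces to the algebraic inequality
\[\alpha^2(1-\mu)\leq 1+\mu,\qquad\text{equivalently}\qquad \alpha^2\leq\frac{1+\mu}{1-\mu}=\coth(\beta/2),\]
using $(1+e^{-\beta})/(1-e^{-\beta})=\coth(\beta/2)$. The time-step $\tau$ enters only through $\beta=4\tau s^2/h^2$, so the task is to identify a clean condition on $\tau$ that guarantees this last inequality uniformly in $\omega$.

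The key analytic ingredient is the elementary bound $\coth(y)\geq 1/y$ for $y>0$, which yields $\coth(\beta/2)\geq 2/\beta=h^2/(2\tau s^2)$. Substituting $\alpha^2=h^2c^2/(4s^2)$, a sufficient condition for stability becomes $\tau c^2\leq 2$, and since $c^2\leq 1$ this holds whenever $\tau\leq 2$, as claimed. The only real obstacle I anticipate is recognizing that the transcendental condition $\alpha^2\leq\coth(\beta/2)$ is handled crisply by this particular elementary estimate; alternative approaches via Taylor expansion or direct monotonicity work in principle but tend to yield weaker CFL constants or more cumbersome bookkeeping. All remaining manipulations are routine substitutions and, as the authors note, easily verifiable with symbolic software.
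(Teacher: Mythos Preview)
Your argument is correct and is genuinely different from the paper's. The paper treats $|\widehat{G}|^2$ as a function $Q(\tau,h,\eta)$ of $\eta=\sin^2(\omega h/2)\in[0,1]$, computes $\partial Q/\partial\eta$ explicitly, and verifies that this derivative is nonpositive when $\tau\leq 2$; since $Q(\tau,h,0)=1$, monotonicity yields $Q\leq 1$. You instead factor $|\widehat{G}|^2-1=(\mu-1)\bigl[(\mu+1)+\alpha^2(\mu-1)\bigr]$ and reduce everything to the single transcendental inequality $\alpha^2\leq\coth(\beta/2)$, which you then handle with the elementary bound $\coth y\geq 1/y$. Your route avoids the derivative computation entirely and makes the role of the constraint $\tau\leq 2$ transparent (it appears directly as $\tau c^2\leq 2$). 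The paper's monotonicity approach, while more laborious, has the side benefit of immediately delivering sharpness: evaluating $\partial Q/\partial\eta$ at $\eta=0$ gives $4\tau(\tau-2)/h^2$, which is positive for $\tau>2$, so the threshold $\tau_0=2$ is seen to be optimal within the same calculation. Your factorization can be pushed to show sharpness too (via $\coth y\sim 1/y$ as $y\to 0$), but you would need an extra line to extract it.
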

\begin{proof}
One can calculate that 
\begin{equation*}
\begin{split}
|\widehat{G}(\tau,h,\omega)|^2&=e^{-\frac{8\tau\eta}{h^2}}+\frac{h^2(1-\eta)}{4\eta}\left(e^{-\frac{4\tau\eta}{h^2}}-1\right)^2\\
&=:Q(\tau,h,\eta),\quad \text{where } \eta=\sin^2(\frac{\omega h}{2})\in[0,1].    
\end{split}
\end{equation*}
Note we have $Q(\tau,h,0) = \lim_{\eta\to 0} Q(\tau, h,\eta) = 1$. With direct computation, one can get
\begin{equation*}
\begin{split}
\frac{\partial Q(\tau,h,\eta)}{\partial \eta}&=-\frac{e^{\frac{-8\tau\eta}{h^2}}}{4h^2\eta^2}\left( \left( h^2(-1 + e^{\frac{4 \tau\eta}{h^2}}) - 4\tau\eta(1-\eta) \right)^2 + 16\tau\eta^2\left(2-\tau(1-\eta)^2\right)\right)\\
&\leq 0\quad\forall \eta\in[0,1], h>0, \tau \leq 2.
\end{split}
\end{equation*}
Therefore, we have $Q(\tau,h,\eta)\leq Q(\tau,h,0)=1$, which completes the proof.
\end{proof}
On the other hand, since
\begin{equation*}
\left.\frac{\partial  Q(\tau,h,\eta)}{\partial \eta}\right|_{\eta=0}=\frac{4\tau(\tau-2)}{h^2}>0, \text{ for } \tau>2,
\end{equation*}
we have $Q(\tau,h,\eta_0)>Q(\tau,h,0)=1$ for some $\eta_0\in(0,1]$. 
Thus, the condition $\tau\leq2$ is indeed necessary for the stability condition $|\widehat{G}(\tau,h,\omega)|\leq 1$, which will also be verified by numerical tests.

\medskip

For the general coefficient problem \eqref{eq:AdvDiffEq}, the time-step constraint for stability is given by $\tau \leq 2\frac{d}{a^2}$, derived through a change of variables as described in Section \ref{sec:dimless}.
\begin{coro}
    The central scheme  for \eqref{eq:AdvDiffEq},    \begin{equation*}
\frac{\dd u_{j}}{\dd t} + a\frac{u_{j+1}-u_{j-1}}{2h} = d\frac{u_{j+1}-2u_{j}+u_{j-1}}{h^2},
\end{equation*}
with ETD-RK1 time discretization is stable under the time-step constraint $\tau\leq2\frac{d}{a^2}$.
\end{coro}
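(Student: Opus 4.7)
The plan is to obtain the corollary as a direct consequence of Theorem \ref{thm:1} via the nondimensionalization procedure discussed in Section \ref{sec:dimless}. The essential observation is that both the semidiscrete central difference operator and the ETD-RK1 update are invariant under the appropriate rescaling of time and space, so the stability bound proved for the dimensionless scheme transfers verbatim to the physical scheme.

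Concretely, I would introduce $t' = (a^2/d) t$ and $x' = (a/d) x$, which yields the dimensionless equation (\ref{eq:AdvDiffEq_Dimless}) with rescaled grid size $h' = (a/d) h$ and time step $\tau' = (a^2/d) \tau$. Multiplying the semidiscrete scheme stated in the corollary through by $d/a^2$, the advection coefficient $a/(2h)$ becomes $1/(2h')$ and the diffusion coefficient $d/h^2$ becomes $1/(h')^2$, so the scheme reduces exactly to (\ref{eq:FD_central}). In the matrix notation of Section \ref{sec:LowestOrder_central}, this amounts to the identities $A' = (d/a^2) A$ and $D' = (d/a^2) D$, hence $\tau' D' = \tau D$ and $\tau' A' = \tau A$.

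Next I would observe that the ETD-RK1 update depends on the operators only through the products $\tau D$ and $D^{-1}(e^{\tau D} - I) A = \tau \varphi_1(\tau D) A$, both of which are invariant under the rescaling. Consequently, the fully discrete scheme applied to (\ref{eq:AdvDiffEq}) with parameters $(\tau, h)$ coincides one-to-one with the dimensionless scheme (\ref{eq:ETDRK1-FD}) with parameters $(\tau', h')$. Invoking Theorem \ref{thm:1}, the Fourier growth factor is bounded by $1$ whenever $\tau' \leq 2$, which back-substitution translates to the advertised bound $\tau \leq 2 d/a^2$.

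No serious obstacle is anticipated; the only point that deserves careful bookkeeping is that the physical length scale $d/a$ (rather than $h$) is used in the definition of $x'$, since this is precisely what forces the advection and diffusion matrices to scale by the same factor $d/a^2$ and thereby renders the ETD update matrix invariant under the rescaling. Once this is in place, the corollary follows without any further Fourier computation.
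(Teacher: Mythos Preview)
Your proposal is correct and follows exactly the paper's approach: the paper simply states that the corollary is ``derived through a change of variables as described in Section \ref{sec:dimless},'' and your argument spells out precisely that rescaling reduction to Theorem \ref{thm:1}.
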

\subsection{Upwind scheme with ETD-RK1 discretization}
It is known that the upwind discretization for advection equations has better stability than the central scheme. Therefore, we expect that the time-step constraint obtained for \eqref{eq:FD_central} is sufficient to ensure the stability of \eqref{eq:FD_upwind}. 
Moreover, since the diffusion term is integrated exactly, we also expect that the usual CFL condition for pure advection equations will satisfy the stability requirement.
In summary, taking all these factors into account, we shall demonstrate that the ETD-RK1 method for \eqref{eq:FD_upwind} is stable under the time-step constraint $\tau\leq\max\{2, h\}$.

Applying \eqref{eq:ETDRK1} to \eqref{eq:FD_upwind}, we obtain the scheme \eqref{eq:ETDRK1-FD} with the same definition of $D$ as in \eqref{eq:ETD-CentralFD-D}, and a different $A$ defined as follows,
\begin{equation}\label{eq:ETD-UpwindFD-A}
A=\frac{1}{h}\begin{bmatrix}
1 & 0 & 0 & 0 & \cdots & 0 & -1\\
-1 & 1 & 0 & 0 & \cdots & 0 & 0\\
0 & -1 & 1 & 0 & \cdots & 0 & 0 \\
& & & \ddots & & & \\
0 & 0 & 0 & 0 & \cdots & -1 & 1\\
\end{bmatrix}.
\end{equation}
For the Fourier mode \eqref{eq:Fansatz} with $u^{n}_{j}=\widehat{u}^n e^{\ii \omega jh}$, it is straightforward to verify that
\begin{equation*}
D\mathbf{u}^n=-\frac{4}{h^2}\sin^2(\frac{\omega h}{2})\mathbf{u}^n\quand A\mathbf{u}^n=\left(\frac{2\ii}{h}\sin(\frac{\omega h}{2})\cos(\frac{\omega h}{2})+\frac2h\sin^2(\frac{\omega h}{2})\right)\mathbf{u}^{n}.
\end{equation*}
The growth factor in \eqref{eq:G_factor} is then given by
\begin{equation*}
\begin{split}
\widehat{G}(\tau,h,\omega)=&e^{-\frac{4\tau}{h^2}\sin^2(\frac{\omega h}{2})}+\frac{h}{2}\left(e^{-\frac{4\tau}{h^2}\sin^2(\frac{\omega h}{2})}-1\right)+\ii\frac{h}{2}\cot{(\frac{\omega h}{2})}\left(e^{-\frac{4\tau}{h^2}\sin^2(\frac{\omega h}{2})}-1\right).
\end{split}
\end{equation*}

The stability and time-step constraint for \eqref{eq:FD_upwind} are stated in the following theorem. Again, we omit some of the algebra and present only the key computational results in the proof.
\begin{thm}\label{thm:2}
The scheme \eqref{eq:ETDRK1-FD}, \eqref{eq:ETD-CentralFD-D} and \eqref{eq:ETD-UpwindFD-A} is stable  under the time-step constraint $\tau\leq\max\{2,h\}$, with the growth factor $|\widehat{G}(\tau,h,\omega)|\leq 1$. 
\end{thm}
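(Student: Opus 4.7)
The plan mirrors Theorem~\ref{thm:1}: I would expand $|\widehat{G}(\tau,h,\omega)|^2$ as a function of $\tau$, $h$, and $\eta=\sin^2(\omega h/2)\in[0,1]$, and show it stays bounded by $1$. Setting $E=e^{-4\tau\eta/h^2}$ and using $\cot^2(\omega h/2)=(1-\eta)/\eta$, a direct calculation of the real and imaginary parts of $\widehat{G}$ gives
\begin{equation*}
|\widehat{G}|^2 = E^2 + hE(E-1) + \frac{h^2(E-1)^2}{4\eta},
\end{equation*}
which after short algebra factors as $|\widehat{G}|^2-1 = -(1-E)\bigl[1+(1+h)E - \tfrac{h^2(1-E)}{4\eta}\bigr]$. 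Since $1-E\geq 0$, the stability condition reduces to
\begin{equation*}
\frac{h^2(1-E)}{4\eta}\leq 1+(1+h)E,
\end{equation*}
or, after the substitution $x:=4\tau\eta/h^2\in(0,4\tau/h^2]$, to the scalar inequality $\tau(1-e^{-x})/x\leq 1+(1+h)e^{-x}$.

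The key observation I would exploit is the elementary identity $\tau\leq\max\{2,h\}$ iff $\tau\leq 2$ or $\tau\leq h$, so the argument splits into two cases. In the diffusion-dominated case $\tau\leq 2$, bounding $(1+h)e^{-x}\geq e^{-x}$ reduces the problem to $2(1-e^{-x})\leq x(1+e^{-x})$ for $x\geq 0$; both sides vanish at $x=0$, and the derivative inequality is equivalent to the standard bound $(1+x)e^{-x}\leq 1$ (itself following from $\tfrac{d}{dx}[(1+x)e^{-x}]=-xe^{-x}\leq 0$). In the advection-dominated case $\tau\leq h$, I would exploit the a~priori bound $\eta\leq 1$ on the Fourier symbol: combined with $\tau\leq h$, it yields $x\leq 4\tau/h^2\leq 4/h$, i.e., $h\leq 4/x$. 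Rearranging the target as $h[1-(1+x)e^{-x}]\leq x(1+e^{-x})$ and substituting $h\leq 4/x$ reduces matters to $(2-x)\leq(2+x)e^{-x}$, which is immediate for $x\geq 2$ and for $x\in[0,2]$ follows by another derivative chase: at $x=0$ both sides equal $2$ and the first derivatives agree, while the second derivative of the difference is $xe^{-x}\geq 0$, forcing the inequality to propagate.

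The main technical obstacle will be the advection-dominated regime $\tau\leq h$, which has no analogue in Theorem~\ref{thm:1}. Unlike the pure-diffusion argument, smallness of $\tau$ alone no longer controls everything; stability here crucially hinges on using the boundedness $\eta\leq 1$ of the Fourier wavenumber to keep $x$ bounded in terms of $h$. Once both scalar inequalities are established, combining the two cases yields $|\widehat{G}(\tau,h,\omega)|\leq 1$ for all $\omega$ under $\tau\leq\max\{2,h\}$, completing the proof.
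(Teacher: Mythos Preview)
Your proof is correct, and both you and the paper split into the two cases $\tau\leq 2$ and $\tau\leq h$. The execution, however, is genuinely different. The paper never factors $|\widehat{G}|^2-1$: for $\tau\leq 2$ it bounds the real-part square by $\max\{E^2,\frac{h^2}{4}(E-1)^2\}$ (using that the two summands have opposite signs), so that $Q\leq\max\{Q_1,Q_2\}$ with $Q_1$ literally the quantity from Theorem~\ref{thm:1} and $Q_2=\frac{h^2}{4\eta}(E-1)^2$, each bounded by $1$ separately; for $\tau\leq h$ it introduces two auxiliary parameterized families $\overline{Q}(h,\eta,\theta)$ and $\widetilde{Q}(\lambda,h,\eta)$, proves monotonicity in $\theta$ via a Taylor-series identity, and then exploits the quadratic (convex) dependence on $\lambda=\tau/h$. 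Your factorization $|\widehat{G}|^2-1=-(1-E)\bigl[1+(1+h)E-\tfrac{h^2(1-E)}{4\eta}\bigr]$ bypasses all of this machinery and reduces both cases to clean one-variable inequalities, $2(1-e^{-x})\leq x(1+e^{-x})$ and $(2-x)\leq(2+x)e^{-x}$, each settled by an elementary derivative chase. What you gain is brevity and transparency; what the paper's route offers is an explicit reuse of Theorem~\ref{thm:1}, making the comparison with the central-flux case structural rather than implicit.

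One small point to tighten in your write-up: in the $\tau\leq h$ case, the step ``rearranging the target as $h[1-(1+x)e^{-x}]\leq x(1+e^{-x})$'' silently replaces $\tau$ by $h$ on the left of $\tau(1-e^{-x})/x\leq 1+(1+h)e^{-x}$ before rearranging. This is valid since $(1-e^{-x})/x\geq 0$ and $\tau\leq h$, but it should be said explicitly.
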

\begin{proof}
One can calculate that
\begin{equation}\label{eq:upwindG}
\begin{split}
|\widehat{G}(\tau,h,\omega)|^2&=\left(e^{-\frac{4\tau\eta}{h^2}}+\frac{h}{2}(e^{-\frac{4\tau\eta}{h^2}}-1)\right)^2 + \frac{h^2(1-\eta)}{4\eta}\left(e^{-\frac{4\tau\eta}{h^2}}-1\right)^2\\
&=:Q(\tau,h,\eta),\quad \text{where } \eta=\sin^2(\frac{\omega h}{2})\in[0,1].
\end{split}
\end{equation}

First, we show that $Q(\tau,h,\eta)\leq1$ under the time-step constraint $\tau\leq2$. Note that 
$e^{-\frac{4\tau\eta}{h^2}}>0$ and $\frac{h}{2}(e^{-\frac{4\tau\eta}{h^2}}-1)<0$. Hence the first term in \eqref{eq:upwindG} has the estimate
\begin{equation*}
    \left(e^{-\frac{4\tau\eta}{h^2}}+\frac{h}{2}(e^{-\frac{4\tau\eta}{h^2}}-1)\right)^2 \leq \max\left\{e^{-\frac{8\tau\eta}{h^2}},\frac{h^2}{4}(e^{-\frac{4\tau\eta}{h^2}}-1)^2\right\}.
\end{equation*}
As a result, by defining the auxiliary functions
\begin{equation*}
Q_1(\tau,h,\eta) = e^{-\frac{8\tau\eta}{h^2}}+\frac{h^2(1-\eta)}{4\eta}\left(e^{-\frac{4\tau\eta}{h^2}}-1\right)^2
\end{equation*}
and
\begin{equation*}
Q_2(\tau,h,\eta) = \frac{h^2}{4\eta}\left(e^{-\frac{4\tau\eta}{h^2}}-1\right)^2,
\end{equation*}
we have 
\begin{equation*}
Q(\tau,h,\eta)\leq\max\{Q_{1}(\tau,h,\eta), Q_2(\tau,h,\eta)\}.
\end{equation*}
It was proved in Theorem \ref{thm:1} that $Q_1(\tau,h,\eta)\leq1$ for $h>0$ and $\eta\in[0,1]$ when $\tau\leq2$. Hence it suffices to show that $Q_2(\tau,h,\eta)\leq 1$ under the same condition. Indeed, we define $g(x)=2(e^{-x}-1)^2-x$.
By computing the derivative $g'(x)=4(1-e^{-x})e^{-x}-1$, we find that $g(x)\leq g(\ln2)=\frac12-\ln2<\frac12-\ln\sqrt{e}=0$ for $x>0$.
Therefore, we have the inequality $\frac{\tau}{x}(e^{-x}-1)^2<1$ for $x>0$ when $\tau\leq 2$.
Substituting in $x=\frac{4\tau\eta}{h^2}$, we obtain $Q_2(\tau,h,\eta)<1$ for $h>0$, $\eta\in[0,1]$ when $\tau\leq2$.
Collecting the above results, we have
\begin{equation*}
Q(\tau,h,\eta)\leq 1, \quad \text{when } \tau\leq2.
\end{equation*}

We then show that $Q(\tau,h,\eta)\leq1$ under the time-step constraint $\tau\leq h$.
We define an auxiliary function 
\begin{equation*}
\overline{Q}(h,\eta,\theta) = \left(e^{-\frac{4\theta\eta}{h}}+\frac{h}{2\theta}(e^{-\frac{4\theta\eta}{h}}-1)\right)^2 + \frac{h^2(1-\eta)}{4\theta^2\eta}\left(e^{-\frac{4\theta\eta}{h}}-1\right)^2,
\end{equation*}
such that 
\begin{equation}\label{eq:Qbar}
\overline{Q}(h,\eta,1)=Q(h,h,\eta) \quand \overline{Q}(h,\eta,0)=\lim_{\theta\to 0}\overline{Q}(h,\eta,\theta)=1.
\end{equation}
By direct calculation, one can get
\begin{equation*}
\begin{split}
&\frac{\partial }{\partial \theta}\overline{Q}(h,\eta,\theta)\\
=&-\frac{h^2}{2\theta^3\eta}e^{-\frac{8\theta\eta}{h}}\left( \left( e^{\frac{4\theta\eta}{h}} - (1+\frac{3\theta\eta}{h}+\frac{4\theta^2\eta^2}{h^2}) \right)^2 - \frac{\theta^2\eta^2}{h^2}\left(1+\frac{4\theta\eta}{h}\right)^2 + \frac{16\theta^3\eta^2}{h^3}(1-\eta) \right)\\
=&-\frac{h^2}{2\theta^3\eta}e^{-\frac{8\theta\eta}{h}}\left( \left( \frac{\theta\eta}{h}\left( 1+\frac{4\theta\eta}{h} \right) +\sum_{m=3}^{\infty}\frac{4^m}{m!}\left(\frac{\theta\eta}{h}\right)^m \right)^2 - \frac{\theta^2\eta^2}{h^2}\left(1+\frac{4\theta\eta}{h}\right)^2 + \frac{16\theta^3\eta^2}{h^3}(1-\eta) \right)\\
\leq&0,\quad \text{ for } \eta\in[0,1], h>0,
\end{split}
\end{equation*}
where in the last inequality we have used $\left( \frac{\theta\eta}{h}\left( 1+\frac{4\theta\eta}{h} \right) +\sum_{m=3}^{\infty}\frac{4^m}{m!}\left(\frac{\theta\eta}{h}\right)^m \right)^2 \geq \frac{\theta^2\eta^2}{h^2}\left(1+\frac{4\theta\eta}{h}\right)^2$ and $\frac{16\theta^3\eta^2}{h^3}(1-\eta) \geq0$ for $\eta\in[0,1]$.
Therefore, using \eqref{eq:Qbar} and the monotonicity of 
$\overline{Q}$ with respect to its last argument, one can obtain
\begin{equation*}
Q(h,h,\eta)=\overline{Q}(h,\eta,1)\leq\overline{Q}(h,\eta,0)=1,\quad \text{ for } \eta\in[0,1], h>0.
\end{equation*}
We then define another auxiliary function 
\begin{equation*}
\begin{split}
\widetilde{Q}(\lambda,h,\eta)&=\left(e^{-\frac{4\eta}{h}}+\frac{\lambda h}{2}(e^{-\frac{4\eta}{h}}-1)\right)^2 + \frac{\lambda^2h^2(1-\eta)}{4\eta}\left(e^{-\frac{4\eta}{h}}-1\right)^2,
\end{split}
\end{equation*}
such that $\widetilde{Q}(1,h,\eta)=Q(h,h,\eta)\leq 1$.
Moreover, it is clear that $\widetilde{Q}(0,h,\eta)=e^{-\frac{8\eta}{h}}\leq 1$.
Since $\widetilde{Q}(\lambda,h,\eta)$ is a concave-up quadratic function with respect to $\lambda$, we have 
\begin{equation*}
\widetilde{Q}(\lambda,h,\eta)\leq \max\{\widetilde{Q}(0,h,\eta),\widetilde{Q}(1,h,\eta)\}\leq 1\quad \forall \eta\in[0,1], h>0, \text{ for } \lambda\in[0,1].
\end{equation*}
Note that the above inequality holds for any positive number in its second argument.  Therefore, if $\tau\leq h$, we have
\begin{equation*}
Q(\tau,h,\eta)=\widetilde{Q}(\frac{\tau}{h},\frac{h^2}{\tau},\eta)\leq \max\{\widetilde{Q}(0,\frac{h^2}{\tau},\eta),\widetilde{Q}(1,\frac{h^2}{\tau},\eta)\}\leq 1,
\end{equation*}
which completes the proof.
\end{proof}

\medskip

For the general coefficient problem \eqref{eq:AdvDiffEq}, the time-step constraint for stability is given by $\tau \leq \max\left\{2\frac{d}{a^2}, \frac{h}{a}\right\}$, derived through a change of variables as described in Section \ref{sec:dimless}.
\begin{coro}
    The upwind scheme for \eqref{eq:AdvDiffEq},    \begin{equation*}
\frac{\dd u_{j}}{\dd t} + a\frac{u_{j}-u_{j-1}}{h} = d\frac{u_{j+1}-2u_{j}+u_{j-1}}{h^2},
\end{equation*}
with ETD-RK1 time discretization is stable under the time-step constraint $\tau \leq \max\left\{2\frac{d}{a^2}, \frac{h}{a}\right\}$.
\end{coro}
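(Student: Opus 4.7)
The plan is to reduce the corollary directly to Theorem \ref{thm:2} via the nondimensionalization procedure introduced in Section \ref{sec:dimless}. The essential observation is that both the upwind spatial discretization and the ETD-RK1 time marching are covariant under the affine rescaling $t' = \frac{a^2}{d}t$, $x' = \frac{a}{d}x$, so no stability information is created or destroyed by the change of variables. In particular, writing $h' = \frac{a}{d}h$ and $\tau' = \frac{a^2}{d}\tau$, the general-coefficient upwind semidiscrete scheme in the stated corollary becomes, after the substitution, precisely the scheme \eqref{eq:FD_upwind} on a uniform grid of width $h'$.

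First I would check the covariance of the fully discrete scheme. Under the change of variables the matrices $A$ and $D$ for the original problem become $\frac{a^2}{d}$ times the corresponding matrices $A'$ and $D'$ for the dimensionless problem \eqref{eq:AdvDiffEq_Dimless}. Since the ETD-RK1 update \eqref{eq:ETDRK1} depends on $A$ and $D$ only through $\tau D$ and $D^{-1}(e^{\tau D}-I)A$, and the scalar factor $\frac{a^2}{d}$ cancels in both expressions when paired with $\tau = \frac{d}{a^2}\tau'$, the fully discrete evolution in the primed variables is exactly the scheme analyzed in Theorem \ref{thm:2}. Invoking that theorem gives stability whenever $\tau' \leq \max\{2, h'\}$; substituting back and simplifying yields the claimed bound
\begin{equation*}
\tau \leq \max\left\{2\frac{d}{a^2}, \frac{h}{a}\right\}.
\end{equation*}

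The only subtle point, and the closest thing to an obstacle, is verifying that Fourier stability transfers cleanly between the two frames. A Fourier mode $e^{\ii\omega jh}$ in the original variables corresponds to $e^{\ii\omega' j h'}$ with $\omega' = \frac{d}{a}\omega$, and as $\omega$ runs over the admissible wavenumbers associated with step $h$, $\omega'$ runs bijectively over the admissible wavenumbers associated with step $h'$. Hence the per-step amplification factor $|\widehat{G}|$ at each mode in the two frames is identical, and the bound $|\widehat{G}|\leq 1$ from Theorem \ref{thm:2} lifts immediately to the original variables. No new analytic estimate beyond this bookkeeping is needed.
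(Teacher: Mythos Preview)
Your proposal is correct and follows exactly the route the paper takes: the corollary is deduced from Theorem \ref{thm:2} by the change of variables in Section \ref{sec:dimless}, and you have simply fleshed out the bookkeeping (covariance of the ETD-RK1 update and bijection of Fourier modes) that the paper leaves implicit.
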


\section{Stability of higher-order cases}\label{sec:HigherOrder}
In this section, we examine the stability and time-step constraints of ETD-RK methods of varying orders combined with higher-order DG methods.
We demonstrate that the constants $\tau_0$ in the time-step constraint $\tau \leq \tau_0 \frac{d}{a^2}$ {for the fully discrete ETD-RKDG schemes coincide with} those of the continuous-in-space, semidiscrete ETD-RK schemes, and provide the specific values of $\tau_0$ to two decimal places for different ETD-RK methods. 

\subsection{Semidiscrete ETD-RK schemes}\label{sec:semi-ETD}
In this subsection, we study the stability of semidiscrete ETD-RK schemes of varying orders. These semidiscrete schemes are continuous in space and discrete in time, as described below:
\begin{itemize}
   \item ETD-RK1:
   \begin{equation}\label{eq:ETDRK1_semi}
   \begin{split}
    u^{n+1}=e^{\tau \partial_{xx}}u^n-\partial_{xx}^{-1}\left(e^{\tau \partial_{xx}}-1\right)\partial_{x}u^n,
   \end{split}
   \end{equation}
   \item ETD-RK2:
   \begin{equation}\label{eq:ETDRK2_semi}
   \begin{split}
       a^n=&u^n+\tau \varphi_1 (\tau \partial_{xx}) (\partial_{xx}u^{n}-\partial_{x}u^n),\\
       u^{n+1}=&a^n+\tau\varphi_{2}(\tau \partial_{xx})(\partial_{x}u^n-\partial_{x}a^n),
   \end{split}
   \end{equation}
    \item ETD-RK3:
    \begin{equation}\label{eq:ETDRK3_semi}
    \begin{split}
        a^n=&u^n+\frac{\tau}{2}\varphi_1(\frac{\tau}{2}\partial_{xx})(\partial_{xx}u^n-\partial_{x}u^n),\\
        b^n=&u^n+\tau\varphi_{1}(\tau \partial_{xx})(\partial_{xx}u^n+\partial_{x}u^n-2\partial_{x}a^n),\\
        u^{n+1}=&u^n+\tau\varphi_{1}(\tau \partial_{xx})(\partial_{xx}u^n-\partial_{x}u^n)\\
        &+\tau\varphi_{2}(\tau \partial_{xx})(3\partial_{x}u^n-4\partial_{x}a^n+\partial_{x}b^n)\\
        &+\tau\varphi_{3}(\tau \partial_{xx})(-4\partial_{x}u^n+8\partial_{x}a^n-4\partial_{x}b^n),
    \end{split}
    \end{equation}
    \item ETD-RK4:
    \begin{equation}\label{eq:ETDRK4_semi}
    \begin{split}
        a^n=&u^n+\frac{\tau}{2}\varphi_{1}(\frac{\tau}{2}\partial_{xx})(\partial_{xx}u^n-\partial_{x}u^n),\\
        b^n=&u^{n}+\frac{\tau}{2}\varphi_{1}(\frac{\tau}{2} \partial_{xx})(\partial_{xx}u^n-\partial_{x}a^n),\\
        c^n=&a^n+\frac{\tau}{2}\varphi_{1}(\frac{\tau}{2} \partial_{xx})(\partial_{xx}a^n+\partial_{x}u^n-2\partial_{x}b^n),\\
        u^{n+1}=&u^n+\tau\varphi_{1}(\tau \partial_{xx})(\partial_{xx}u^n-\partial_{x}u^n)\\
        &+\tau\varphi_{2}(\tau \partial_{xx})(3\partial_{x}u^{n}-2\partial_{x}a^n-2\partial_{x}b^n+\partial_{x}c^n)\\
        &+\tau\varphi_{3}(\tau \partial_{xx})(-4\partial_{x}u^n+4\partial_{x}a^n+4\partial_{x}b^n-4\partial_{x}c^n).
    \end{split}
    \end{equation}
\end{itemize}

We will apply the Fourier method to analyze the above ETD-RK schemes \cite{du2004stability}.
For simplicity, we assume that the solution \( u^n \) $\in L^1(\mathbb{R})\cap L^2(\mathbb{R})$ and consider the continuous Fourier transform.

We denote the Fourier transform of an integrable function $f$ on $\mathbb{R}$ by 
\begin{equation*}
\widehat{f}(\xi)=\mathcal{F}[f](\xi):=\int_{\mathbb{R}}f(x)e^{-\ii\xi x}\dd x.
\end{equation*}
It's well-known that $\mathcal{F}$ is an isometric transform on $L^2(\mathbb{R})$ (up to a constant factor), with the property,
\begin{equation*}
\mathcal{F}[\partial_x^\alpha f]=(\ii\xi)^{\alpha}\mathcal{F}[f].
\end{equation*}
Applying the Fourier transform to both sides of the semidiscrete ETD-RK1 scheme \eqref{eq:ETDRK1_semi}, we obtain
\begin{equation*}
\begin{split}
\widehat{u}^{n+1}&=e^{-\tau\xi^2}\widehat{u}^n+\frac{1}{\xi^2}(e^{-\tau\xi^2}-1)(\ii\xi)\widehat{u}^n\\
&=:\widehat{G}(\tau, \xi)\widehat{u}^{n},
\end{split}
\end{equation*}
where $\widehat{G}(\tau,\xi)=e^{-\tau\xi^2}+\frac{\ii}{\xi}(e^{-\tau\xi^2}-1)$ is the growth factor of the scheme in Fourier space.
We demonstrate that the time-step constraint for stability for the semidiscrete scheme is consistent with that of the fully discretized central scheme \eqref{eq:ETDRK1-FD}-\eqref{eq:ETD-CentralFD-A}, and the proof is very similar to that of Theorem \ref{thm:1}.
\begin{thm}
The semidiscrete ETD-RK1 scheme \eqref{eq:ETDRK1_semi} is stable under the time-step constriant $\tau\leq 2$, with the growth factor $|\widehat{G}(\tau,\xi)|\leq1$ for all $\xi$.
\end{thm}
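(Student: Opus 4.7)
The plan is to follow the blueprint laid out in the proof of Theorem \ref{thm:1} almost verbatim, replacing the discrete symbol $\eta=\sin^2(\omega h/2)/? $ by the continuous frequency variable $\xi$. Concretely, I would first square the explicit growth factor
\begin{equation*}
\widehat{G}(\tau,\xi) = e^{-\tau\xi^2} + \frac{\ii}{\xi}\bigl(e^{-\tau\xi^2}-1\bigr)
\end{equation*}
to obtain
\begin{equation*}
|\widehat{G}(\tau,\xi)|^2 = e^{-2\tau\xi^2} + \frac{1}{\xi^2}\bigl(e^{-\tau\xi^2}-1\bigr)^2 =: Q(\tau,\xi).
\end{equation*}
Since only $\xi^2$ appears, I would introduce $s = \xi^2 \in [0,\infty)$ and treat $Q$ as a function of $s$. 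The boundary value at the origin is handled by the Taylor expansion $e^{-\tau s}-1 = -\tau s + O(s^2)$, which gives $\lim_{s\to 0^+} Q(\tau,s) = 1$.

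The heart of the argument is then to show $\partial_s Q(\tau,s) \leq 0$ for all $s>0$ whenever $\tau \leq 2$. A direct differentiation yields
\begin{equation*}
\partial_s Q = -2\tau e^{-2\tau s} - \frac{1}{s^2}\bigl(e^{-\tau s}-1\bigr)^2 - \frac{2\tau}{s}\bigl(e^{-\tau s}-1\bigr)e^{-\tau s},
\end{equation*}
and the task is to write this as a manifestly non-positive expression. I expect the correct grouping to mirror the factorization used in Theorem \ref{thm:1}: after multiplying by a positive prefactor one should arrive at something of the form
\begin{equation*}
\bigl(\text{perfect square in } e^{\tau s}, \tau s\bigr) \;+\; (\text{non-negative remainder involving } 2-\tau),
\end{equation*}
the $2-\tau$ factor being exactly what converts the constraint $\tau \leq 2$ into the required sign. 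Indeed, a quick check at $s\to 0^+$ gives $\partial_s Q|_{s=0} = \tau(\tau-2)$, which both confirms the threshold and indicates where the $(2-\tau)$ factor must live. One may equivalently derive the identity by passing to the $h\to 0$ limit of the $\partial_\eta Q$ formula in Theorem \ref{thm:1} under $\eta = h^2\xi^2/4$, which turns the discrete identity directly into its continuous counterpart; I would use this as a sanity check on the algebra.

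Once $\partial_s Q \leq 0$ is established, monotonicity in $s$ together with $Q(\tau,0)=1$ gives $|\widehat{G}(\tau,\xi)|^2 = Q(\tau,s) \leq 1$ for every $\xi \in \mathbb{R}$ and $\tau \leq 2$. Plancherel's identity then promotes this pointwise bound in frequency to $L^2$ stability of the semidiscrete scheme. The main obstacle is purely algebraic, namely producing the clean sum-of-squares-plus-$(2-\tau)$-remainder representation of $\partial_s Q$; this is tedious but essentially identical in structure to the $\partial_\eta Q$ computation in Theorem \ref{thm:1}, and like that one is best verified with symbolic computation before being recorded in the paper.
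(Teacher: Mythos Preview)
Your proposal is correct and follows essentially the same approach as the paper: introduce $\eta=\xi^2$ (your $s$), compute $|\widehat G|^2=e^{-2\tau\eta}+\eta^{-1}(e^{-\tau\eta}-1)^2$, and show monotonicity via the identity $\partial_\eta \widehat Q=-\frac{e^{-2\tau\eta}}{\eta^2}\bigl((e^{\tau\eta}-\tau\eta-1)^2+\tau\eta^2(2-\tau)\bigr)\le 0$, which is exactly the perfect-square-plus-$(2-\tau)$-remainder decomposition you anticipated. The paper also records the same necessity check $\partial_\eta\widehat Q|_{\eta=0}=\tau(\tau-2)$ and the Plancherel step.
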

\begin{proof}
One can calculate that
\begin{equation*}
\begin{split}
|\widehat{G}(\tau,\xi)|^2&=e^{-2\tau\eta}+\frac{1}{\eta}(e^{-\tau\eta}-1)^2\\
&=:\widehat{Q}(\tau,\eta), \quad \text{where } \eta=\xi^2\geq0.
\end{split}
\end{equation*}
Since 
\begin{equation*}
\frac{\partial \widehat{Q}(\tau,\eta)}{\partial\eta}=-\frac{e^{-2\tau\eta}}{\eta^2}\left((e^{\tau \eta}-\tau\eta-1)^2+\tau\eta^2(2-\tau)\right)\leq0\quad \forall\eta>0,
\end{equation*}
we have $\widehat{Q}(\tau,\eta)\leq\widehat{Q}(\tau,0)=\lim_{\eta\to 0} \widehat{Q}(\tau,\eta)=1$, which completes the proof.
\end{proof}
Indeed, the condition $\tau\leq2$ is also necessary for $|\widehat{G}(\tau,\xi)|\leq 1$, since 
\begin{equation*}
\left.\frac{\partial \widehat{Q}(\tau,\eta)}{\partial\eta}\right|_{\eta=0}=\tau(\tau-2)>0 \text{ for } \tau>2.
\end{equation*}

As a consequence of the theorem, we have $|\widehat{u}^{n+1}(\xi)|\leq|\widehat{u}^{n}(\xi)|$ for all $ \xi\in\mathbb{R}$, which implies $||\widehat{u}^{n+1}||_{L^2(\mathbb{R})}\leq ||\widehat{u}^{n}||_{L^2(\mathbb{R})}$ and, therefore, $||{u}^{n+1}||_{L^2(\mathbb{R})}\leq ||{u}^{n}||_{L^2(\mathbb{R})}$ due to Plancherel theorem.

Similarly, we can derive the expression $\widehat{G}(\tau,\xi)$ for higher-order semidiscrete ETD-RK methods \eqref{eq:ETDRK2_semi} -- \eqref{eq:ETDRK4_semi}, which are provided below.
\begin{itemize}
   \item ETD-RK2:
   \begin{equation*}
   \begin{split}
       \widehat{a}(\tau,\xi)=&1+\tau \varphi_1 (-\tau\xi^2) (-\xi^2-\ii\xi),\\
       \widehat{G}(\tau,\xi)=&\widehat{a}(\tau,\xi)+\tau\varphi_{2}(-\tau \xi^2)(\ii\xi-\ii\xi\widehat{a}(\tau,\xi)),
   \end{split}
   \end{equation*}
    \item ETD-RK3:
    \begin{equation*}
    \begin{split}
        \widehat{a}(\tau,\xi)=&1+\frac{\tau}{2}\varphi_1(-\frac{\tau}{2}\xi^2)(-\xi^2-\ii\xi),\\
        \widehat{b}(\tau,\xi)=&1+\tau\varphi_{1}(-\tau \xi^2)(-\xi^2+\ii\xi-2\ii\xi\widehat{a}(\tau,\xi)),\\
        \widehat{G}(\tau,\xi)=&1+\tau\varphi_{1}(-\tau \xi^2)(-\xi^2-\ii\xi)\\
        &+\tau\varphi_{2}(-\tau \xi^2)(3\ii\xi-4\ii\xi\widehat{a}(\tau,\xi)+\ii\xi\widehat{b}(\tau,\xi))\\
        &+\tau\varphi_{3}(-\tau \xi^2)(-4\ii\xi+8\ii\xi\widehat{a}(\tau,\xi)-4\ii\xi\widehat{b}(\tau,\xi)),
    \end{split}
    \end{equation*}
    \item ETD-RK4:    \begin{equation*}
    \begin{split}
        \widehat{a}(\tau,\xi)=&1+\frac{\tau}{2}\varphi_{1}(-\frac{\tau}{2}\xi^2)(-\xi^2-\ii\xi),\\
        \widehat{b}(\tau,\xi)=&1+\frac{\tau}{2}\varphi_{1}(-\frac{\tau}{2} \xi^2)(-\xi^2-\ii\xi\widehat{a}(\tau,\xi)),\\
        \widehat{c}(\tau,\xi)=&\widehat{a}(\tau,\xi)+\frac{\tau}{2}\varphi_{1}(-\frac{\tau}{2} \xi^2)(-\xi^2\widehat{a}(\xi,\tau)+\ii\xi-2\ii\xi\widehat{b}(\tau,\xi)),\\
        \widehat{G}(\tau,\xi)=&1+\tau\varphi_{1}(-\tau \xi^2)(-\xi^2-\ii\xi)\\
        &+\tau\varphi_{2}(-\tau\xi^2)(3\ii\xi-2\ii\xi\widehat{a}(\tau,\xi)-2\ii\xi\widehat{b}(\tau,\xi)+\ii\xi\widehat{c}(\tau,\xi))\\
        &+\tau\varphi_{3}(-\tau\xi^2)(-4\ii\xi+4\ii\xi\widehat{a}(\tau,\xi)+4\ii\xi\widehat{b}(\tau,\xi)-4\ii\xi\widehat{c}(\xi,\tau)).
    \end{split}
    \end{equation*}
\end{itemize}
Although the expressions are too complex for theoretical analysis, a numerical search reveals that there always exists a value $\tau_0$ such that $|\widehat{G}(\tau,\xi)|\leq1$ for all $\xi\in\mathbb{R}$ and $\tau\leq\tau_0$. 
The values of $\tau_0$ for various ETD-RK methods, along with the plots of $|\widehat{G}(\tau_0,\xi)|^2$ versus $\xi$, are shown in Figure \ref{fig:G2vsxi}.
These searched values of $\tau_0$ are valid up to the last digit shown, except for the ETD-RK1, where the value is exact. With the given $\tau_0$ values, the curves in Figure \ref{fig:G2vsxi} approach 1 from below, indicating $|\widehat{G}(\tau_0,\xi)|^2 \leq 1$. 
However, if $\tau$ is further increased, the curves in Figure \ref{fig:G2vsxi} exceed 1 (first doing so at the second peak in (b)–(d)), indicating that $|\widehat{G}(\tau,\xi)|^2 > 1$ and thus violating the stability condition.

\begin{figure}[!htbp]
 \centering
 \begin{subfigure}[b]{0.24\textwidth}
  \includegraphics[width=\textwidth]{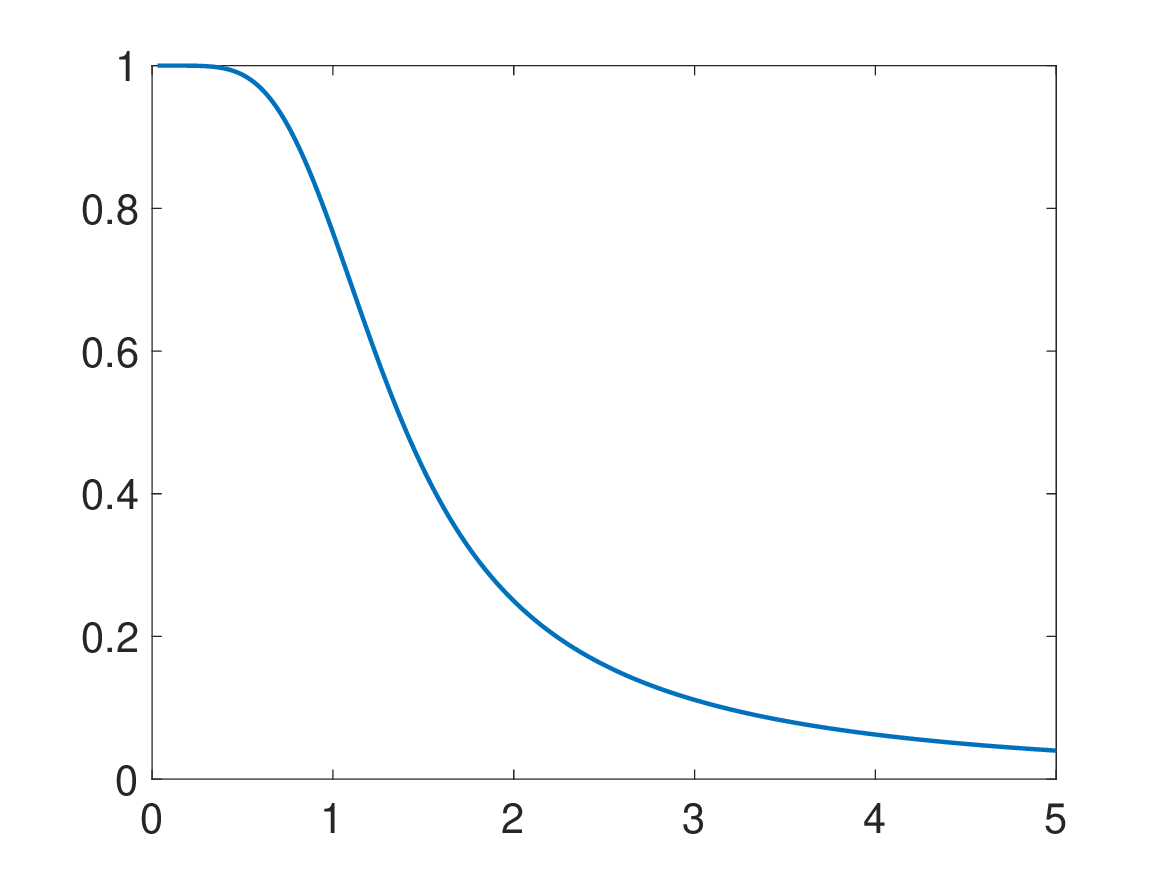}  \caption{ETD-RK1, $\tau_0=2$}
 \end{subfigure}
  \begin{subfigure}[b]{0.24\textwidth}
  \includegraphics[width=\textwidth]{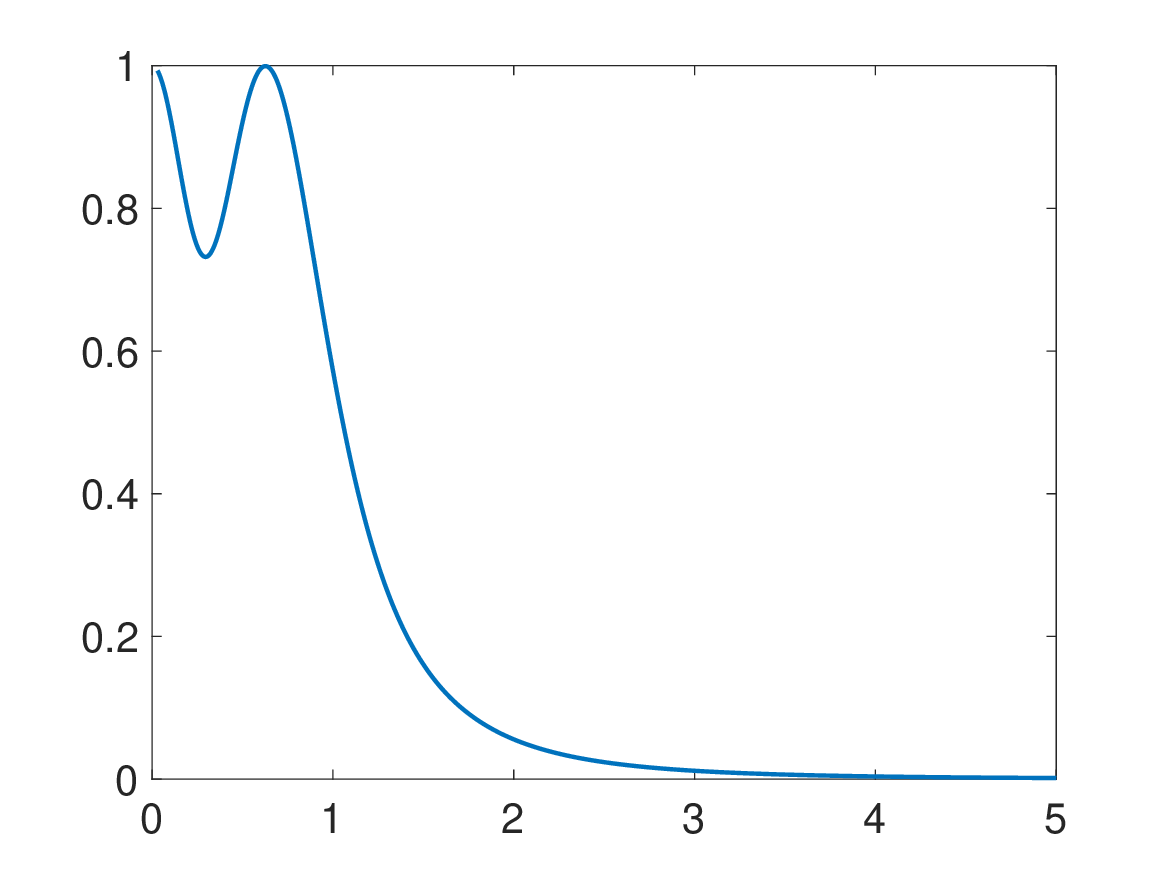}
\caption{ETD-RK2, $\tau_0=3.93$}
 \end{subfigure}
 \begin{subfigure}[b]{0.24\textwidth}
  \includegraphics[width=\textwidth]{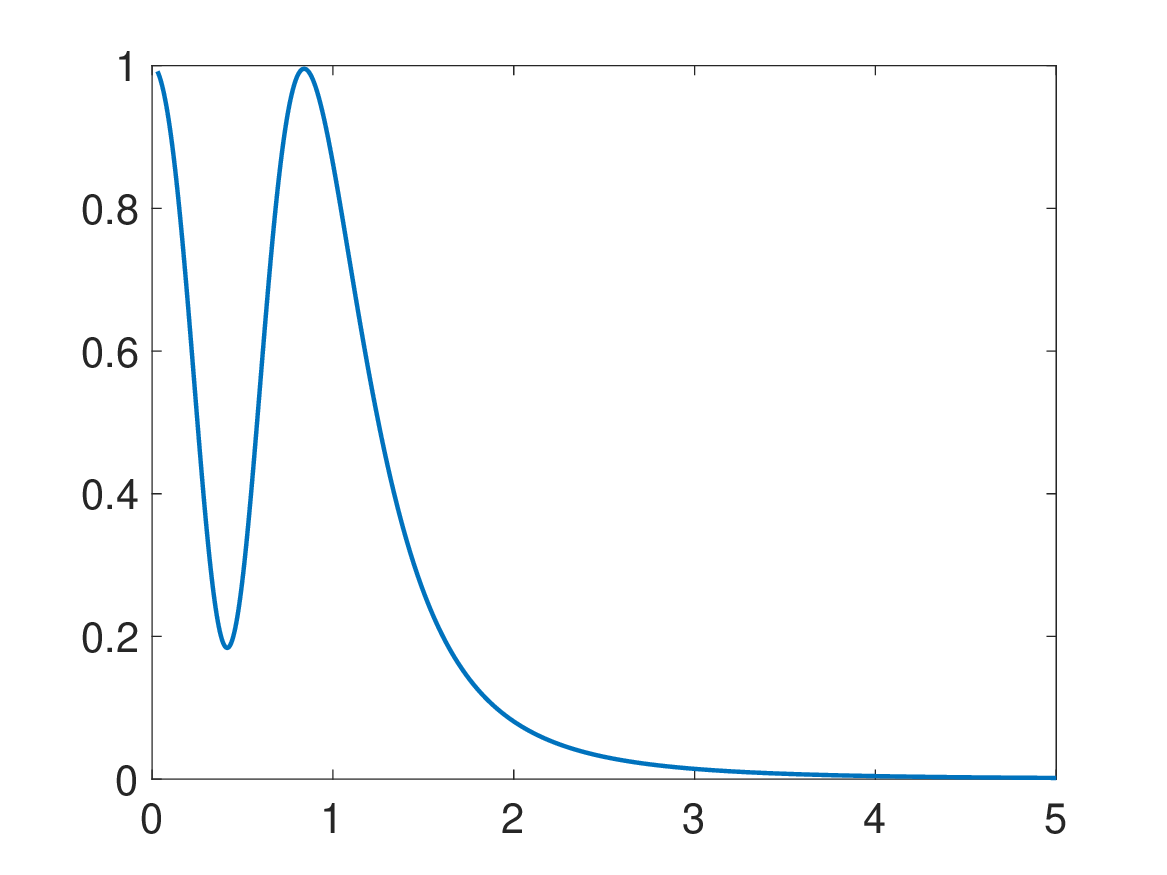}
\caption{ETD-RK3, $\tau_0=4.55$}
 \end{subfigure}
 \begin{subfigure}[b]{0.24\textwidth}
  \includegraphics[width=\textwidth]{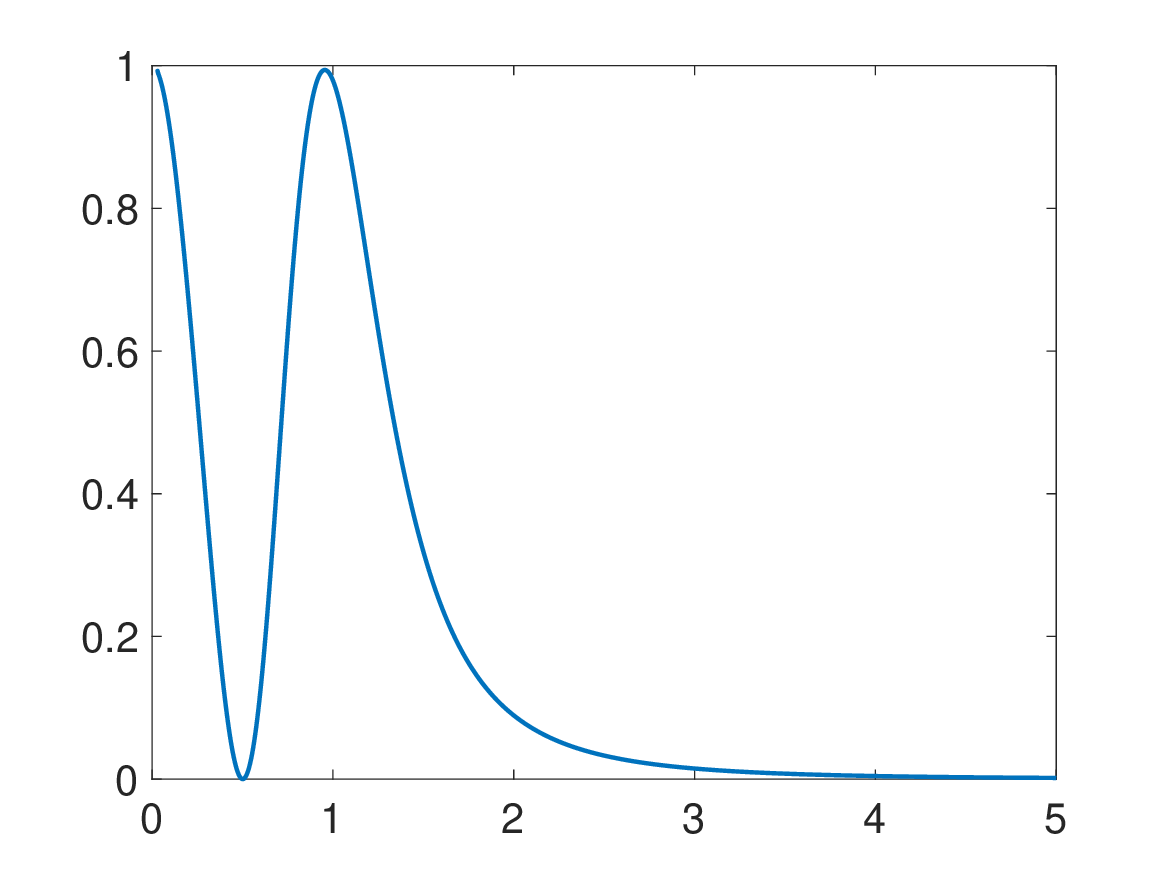}
  \caption{ETD-RK4, $\tau_0=4.81$}
 \end{subfigure}
 \caption{The square of growth factor $|\widehat{G}(\tau_0,\xi)|^2$ versus $\xi$. 
 Since $|\widehat{G}(\tau_0,\xi)|^2$ is an even function with respect to $\xi$, we present only $\xi>0$.
 The value of $\tau_0=2$ for ETD-RK1 is sharp.
 The searched values of $\tau_0$ for ETD-RK2, ETD-RK3, and ETD-RK4 are valid up to the last digit shown. For example, if $\tau_0=3.93$, then $\tau=3.93$ satisfies the stability condition, but $\tau=3.94$ does not.}
 \label{fig:G2vsxi}
\end{figure}


\subsection{ETD-RKDG schemes}
In this subsection, we study the stability of fully discrete ETD-RKDG schemes of varying orders. For further details on applying the Fourier approach for analyzing DG schemes, we refer to \cite{zhang2003analysis,guo2013superconvergence,chen2025runge}.

When the advection term is discretized using the DG method \eqref{eq:DG} with the central flux \eqref{eq:CentralFlux}, and the diffusion term is discretized using the LDG method \eqref{eq:LDG} with the alternating flux \eqref{eq:LDGAltFlux}, the resulting semidiscrete DG scheme for \eqref{eq:AdvDiffEq_Dimless} can be expressed as the following matrix equations:
\begin{equation*}
\begin{cases}
\frac{\dd\mathbf{u}_{j}}{\dd t} + A_{-1}\mathbf{u}_{j-1} + A_0\mathbf{u}_{j}+A_{1}\mathbf{u}_{j+1}=B_{0}\mathbf{p}_{j}+B_{1}\mathbf{p}_{j+1},\\
\mathbf{p}_{j}=C_{-1}\mathbf{u}_{j-1}+C_{0}\mathbf{u}_{j},
\end{cases}
\end{equation*}
where $\mathbf{u}_{j}, \mathbf{p}_{j}\in\mathbb{R}^{k+1}$ are the degrees of freedom of $u_h,p_h\in \mathbb{V}_{h}^{k}$ on cell $I_j$, for $j=1,2,\ldots, N$, and $A_{-1}, A_{0},\ldots, C_{0}$ are $(k+1)\times (k+1)$ real matrices associated with the DG spatial discretization.
The auxiliary variable $\mathbf{p}_j$ can be further eliminated to derive a more compact form of the matrix equation:
\begin{equation}\label{eq:MatrixEquation}
\frac{\dd\mathbf{u}_{j}}{\dd t} + A_{-1}\mathbf{u}_{j-1} + A_0\mathbf{u}_{j} + A_{1}\mathbf{u}_{j+1} = D_{-1}\mathbf{u}_{j-1}+D_0\mathbf{u}_{j}+D_{1}\mathbf{u}_{j+1}.
\end{equation}
The matrix equations are determined once the polynomial order $k$ and basis functions on each mesh cell are specified.
For instance, if we adopt the piecewise linear DG space ($k=1$) and use the Lagrange interpolation basis at the $k+1$ Legendre--Gauss--Lobatto points, we have $\mathbf{u}_{j}=\begin{bmatrix}u_{j-\frac12}^{+},u_{j+\frac12}^{-}\end{bmatrix}^T$, and the matrices in \eqref{eq:MatrixEquation} are given by:
\begin{equation*}
\begin{split}
&A_{-1} = \frac1h\begin{bmatrix}
    0& -2\\
    0& 1
\end{bmatrix},\quad 
A_{0} = \frac1h\begin{bmatrix}
    1& 2\\
    -2& -1
\end{bmatrix},\quad 
A_{1} = \frac1h\begin{bmatrix}
    -1& 0\\
    2& 0
\end{bmatrix},
\end{split}
\end{equation*}
and
\begin{equation*}
\begin{split}
&D_{-1}=\frac{1}{h^2}\begin{bmatrix}
    0&10\\
    0&-2
\end{bmatrix}, \quad 
D_{0}=\frac{1}{h^2}\begin{bmatrix}
    -12&10\\
    6&-20
\end{bmatrix}, \quad 
D_1=\frac{1}{h^2}\begin{bmatrix}
    -6&-2\\
    12&4
\end{bmatrix}.
\end{split}
\end{equation*}
Similarly, using the upwind flux \eqref{eq:UpwindFlux} in the DG discretization \eqref{eq:DG} for the advection term and the IPDG discretization \eqref{eq:IPDG} for the diffusion term results in the matrix equation \eqref{eq:MatrixEquation} with the following matrices:
\begin{equation*}
\begin{split}
&A_{-1} = \frac1h\begin{bmatrix}
   0 & -4\\
   0 & 2
\end{bmatrix},\quad 
A_{0} = \frac1h\begin{bmatrix}
   3 & 1\\
   -3 & 1
\end{bmatrix},\quad 
A_{1} = \begin{bmatrix}
    0 & 0\\
    0 & 0
\end{bmatrix},
\end{split}
\end{equation*}
and
\begin{equation*}
\begin{split}
D_{-1}=\frac{1}{h^2}&\begin{bmatrix}
    2 & -2+3\epsilon+4\sigma\\
    -1 & 1-3\epsilon-2\sigma
\end{bmatrix}, 
D_{0}=\frac{1}{h^2}\begin{bmatrix}
    -3-3\epsilon-4\sigma & 3+3\epsilon+2\sigma\\
    3+3\epsilon+2\sigma & -3-3\epsilon-4\sigma
\end{bmatrix}, 
D_1=\frac{1}{h^2}\begin{bmatrix}
    1-3\epsilon-2\sigma & -1\\
    -2+3\epsilon+4\sigma& 2
\end{bmatrix}.
\end{split}
\end{equation*}
The derivation of matrix equations for other choices of schemes, numerical fluxes, DG spaces, and basis functions follows a pattern similar to that of \eqref{eq:MatrixEquation} (possibly with wider stencils) and is omitted here for brevity.

Consider a Fourier mode $\mathbf{u}_j(t)=\widehat{\mathbf{u}}(t)e^{\ii \omega jh}$.
Substituting it into the matrix equation \eqref{eq:MatrixEquation}, we obtain the following equation for the mode:
\begin{equation}\label{eq:FourierODEs}
\begin{split}
\frac{\dd \widehat{\mathbf{u}}(t)}{\dd t}&=-\left( A_{-1}e^{-\ii \xi} + A_0 +A_{1}e^{\ii \xi} \right)\widehat{\mathbf{u}}(t) + \left(D_{-1}e^{-\ii \xi}+D_0+D_1e^{\ii \xi} \right)\widehat{\mathbf{u}}(t)\\
:&=\left(-\hat{A}(h,\xi)+\hat{D}(h,\xi)\right)\widehat{\mathbf{u}}(t),\quad \text{where } \xi=\omega h.
\end{split}
\end{equation}
We apply the ETD-RK schemes \eqref{eq:ETDRK1}-\eqref{eq:ETDRK4} to the modal equation \eqref{eq:FourierODEs} and numerically investigate their stability.
The ETD-RK1 scheme yields the following difference equation:
\begin{equation*}
\begin{split}
\widehat{\mathbf{u}}^{n+1} &= e^{\tau \hat{D}(h,\xi)}\widehat{\mathbf{u}}^n-\hat{D}^{-1}(h,\xi)\left(e^{\tau\hat{D}(h,\xi)}-I\right)\hat{A}(h,\xi)\widehat{\mathbf{u}}^n,\\
:&=\widehat{G}(\tau,h,\xi)\widehat{\mathbf{u}}^n,
\end{split}
\end{equation*}
and similarly for other higher-order ETD-RK schemes.
As expected, these fully discrete ETD-RKDG schemes converge to their semi-discrete ETD-RK counterparts as $h\rightarrow0$.

{We denote by $\rho(\widehat{G}(\tau,h,\xi))$ the spectral radius of the matrix growth factor $\widehat{G}(\tau,h,\xi)$ and seek a time-step constraint that ensures the stability condition 
\begin{equation}\label{eq:full_discrete_stab}
\sup_{\xi\in[-\pi,\pi]}\rho(\widehat{G}(\tau,h,\xi))\leq 1\quad\forall h>0.
\end{equation}
As suggested by the lowest-order ETD-RKDG schemes, we expect time-step constraints of the form $\tau\leq C$ for the central DG flux \eqref{eq:CentralFlux} and $\tau\leq\max\{C,\tilde{C} h\}$ for the upwind DG flux \eqref{eq:UpwindFlux}.

Since $\rho(\widehat{G}(\tau,h,\xi))$ should converge to $|\widehat{G}(\tau,\xi)|$ in Section \ref{sec:semi-ETD} in the limit $h\rightarrow0$, we anticipate that the values $\tau_0$ determined from semi-discrete ETD-RK schemes would serve as upper bounds for $C$. We sample $\xi$ in $[-\pi, \pi]$ and take different $h>0$, and compute the corresponding spectral radii.
To minimize the effects of round-off errors in floating-point computations, we use extended precision with 512-bit floating-point arithmetic (BigFloat, refer to \cite{BigFloat2025}) on the Julia Platform.
Surprisingly, extensive numerical experiments indicate that the values $\tau_0$ obtained from semi-discrete ETD-RK schemes are not only necessary but also sufficient for ensuring the stability condition \eqref{eq:full_discrete_stab}, regardless of the polynomial degree or the specific choice of the
DG method.
This remains true even when extending the truncated 
$\tau_0$ values to ten decimal places.

In summary, we establish the stability condition \eqref{eq:full_discrete_stab} under the time-step constraints $\tau\leq \tau_0$ and $\tau\leq\max\{\tau_0,c_0 h\}$ for the central \eqref{eq:CentralFlux} and upwind \eqref{eq:UpwindFlux} DG fluxes, respectively.  
The values of $\tau_0$ are provided in Table \ref{tab:tau_0} up to two decimal places, and $c_0$ represents the standard CFL constants reported in Table 2.2 of \cite{cockburn2001runge}.}
\begin{table}[h!]
\centering
\begin{tabular}{|c|c|c|c|c|}
\hline
method & ETD-RK1 & ETD-RK2 & ETD-RK3 & ETD-RK4 \\ \hline
$\tau_0$ & 2 & 3.93 & 4.55 & 4.81 \\ \hline
\end{tabular}
\caption{Stable $\tau_0$ for $\sup_{\xi\in[-\pi,\pi]}\rho(\widehat{G}(\tau,h,\xi))\leq 1 \ \forall h>0$.}\label{tab:tau_0}
\end{table}

Finally, we visualize the square of the growth factor, $\rho(\widehat{G}(\tau_0,h,\xi))^2$, as a function of $\xi$ for a specific spatial discretization setting: central DG flux for the advection combined with LDG discretization for diffusion, using a $\mathbb{P}^4$ polynomial space and $h = \frac{\pi}{10^{6}}$. 
The results for other spatial discretization choices are close.
From Figure \ref{fig:G2_DG_vsxi_ETD}, we observe that the pattern of $\rho(\widehat{G}(\tau_0,h,\xi))^2$ closely resembles their semi-discrete counterparts $|\widehat{G}(\tau_0,\xi)|^2$.
\begin{figure}[!htbp]
 \centering
 \begin{subfigure}[b]{0.24\textwidth}
  \includegraphics[width=\textwidth]{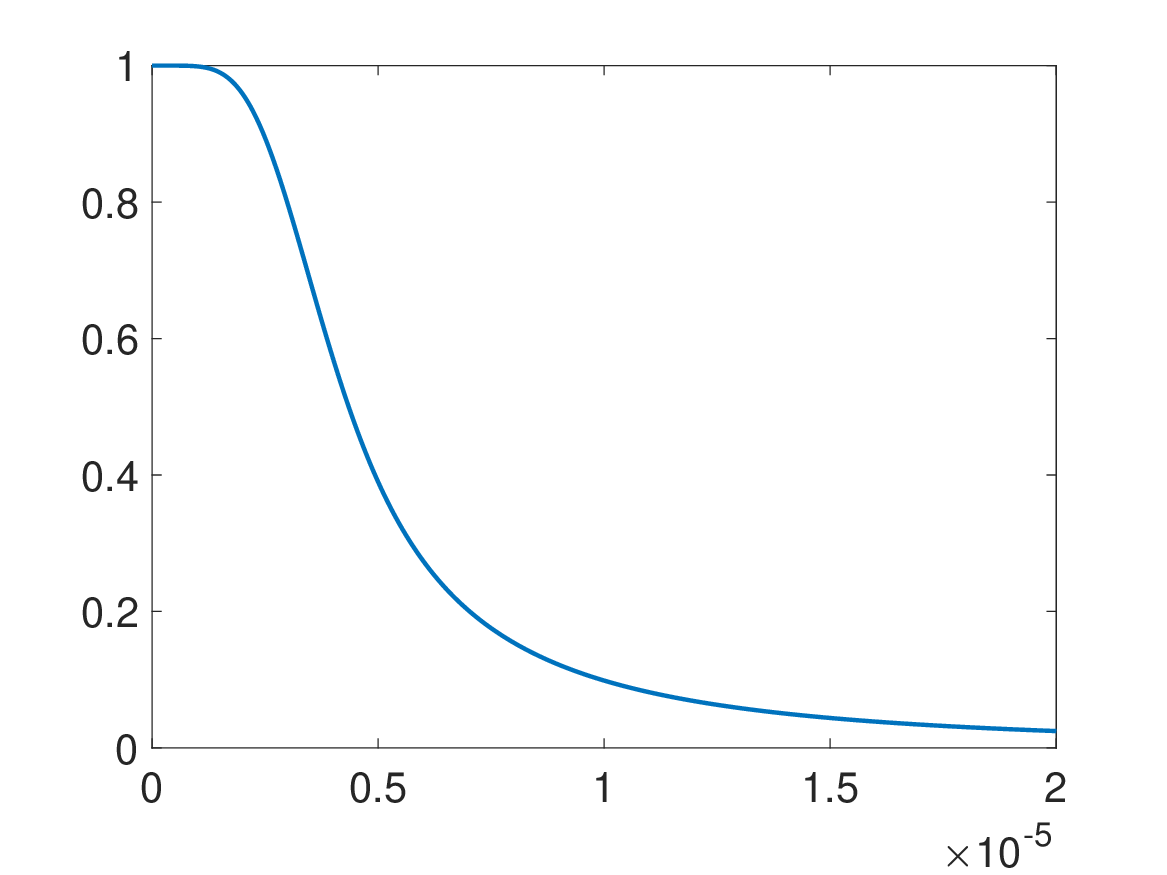}  \caption{ETD-RK1, $\tau_0=2$}
 \end{subfigure}
  \begin{subfigure}[b]{0.24\textwidth}
  \includegraphics[width=\textwidth]{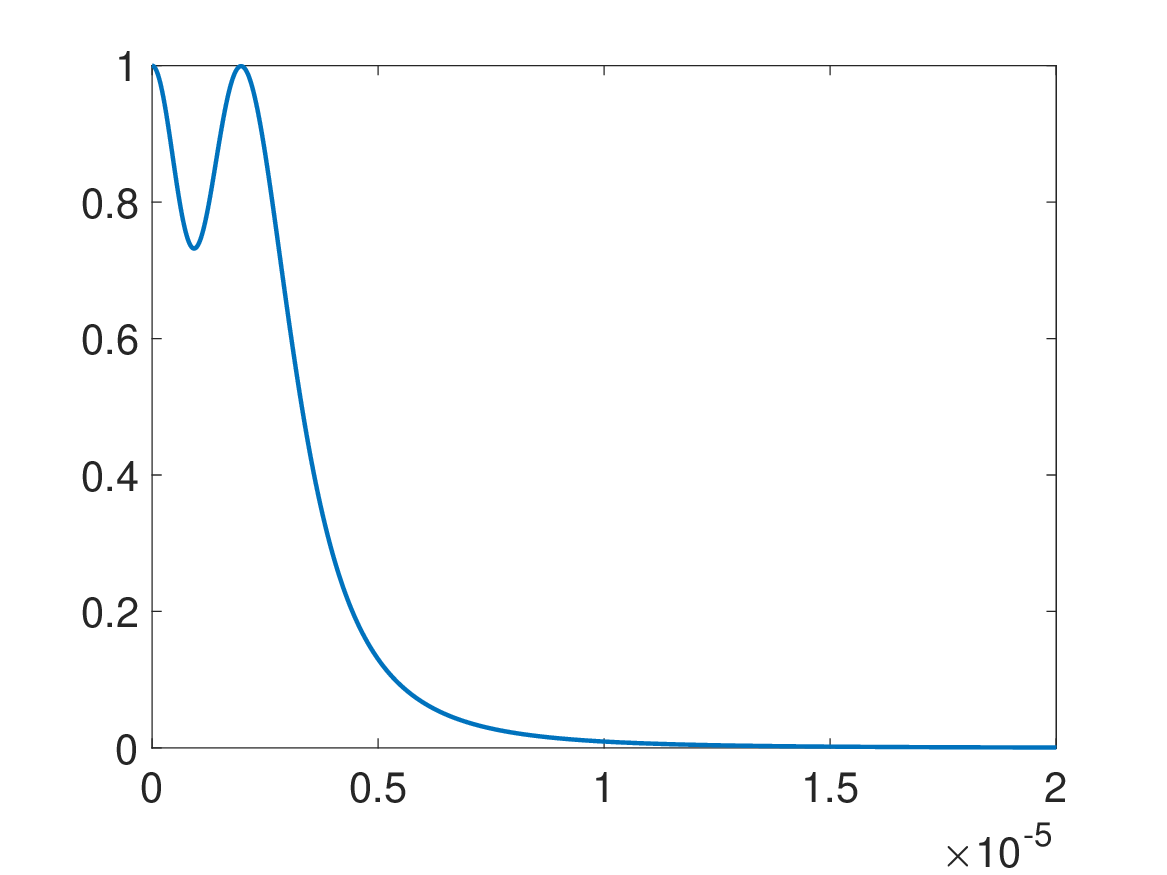}
\caption{ETD-RK2, $\tau_0=3.93$}
 \end{subfigure}
 \begin{subfigure}[b]{0.24\textwidth}
  \includegraphics[width=\textwidth]{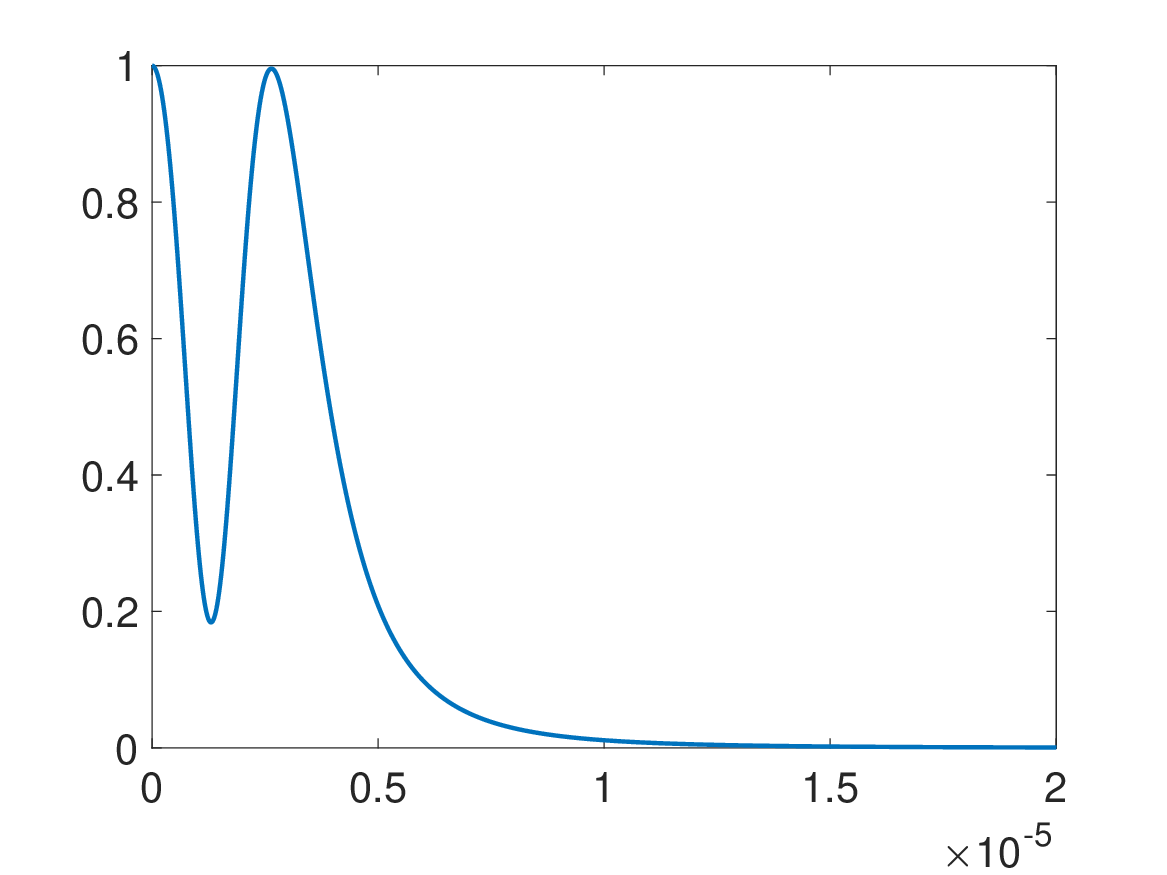}
\caption{ETD-RK3, $\tau_0=4.55$}
 \end{subfigure}
 \begin{subfigure}[b]{0.24\textwidth}
  \includegraphics[width=\textwidth]{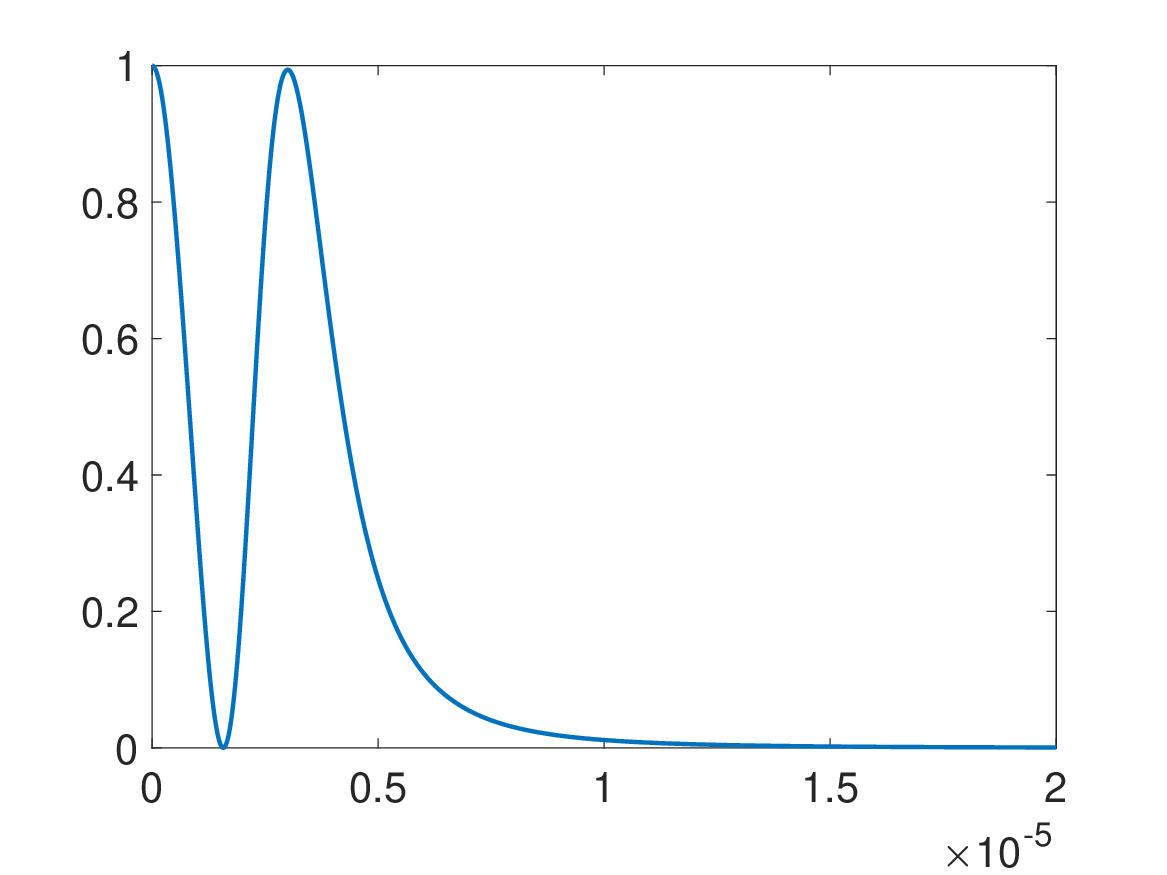}
\caption{ETD-RK4, $\tau_0=4.81$}
 \end{subfigure} 
 \caption{The square of growth factor $\rho(\widehat{G}(\tau_0,h,\xi))^2$ versus $\xi=\omega h$ for a specific spatial discretization setting: central DG flux for the advection combined with LDG discretization for diffusion, using a $\mathbb{P}^4$ polynomial space and $h = \frac{\pi}{10^{6}}$.
 The results for other spatial discretization choices are close.}
 \label{fig:G2_DG_vsxi_ETD}
\end{figure}
\begin{rem}
    Under different choices of basis functions, the DG matrices in \eqref{eq:MatrixEquation} will differ, but they are all similar matrices. As a result, the spectral radius of \( \widehat{G} \) is independent of the choice of basis functions. 
\end{rem}

\section{Numerical experiments}\label{sec:Numerics}
In this section, we numerically examine the stability and time-step constraints of the ETD-RKDG methods analyzed in the previous sections.


The DG method \eqref{eq:DG} with the central flux \eqref{eq:CentralFlux} is used to discretize the advection terms. It is worth noting that the central flux is typically considered a less desirable choice for the explicit discretization of hyperbolic equations due to its reduced stability. However, we employ the central flux here to better illustrate the stability enhancement provided by ETD-RK methods, as this improvement is more evident when applied to a less stable flux.
The numerical results for different discretizations of the diffusion terms are very similar and do not affect the conclusions of this section. For brevity, we present only the results of the LDG method \eqref{eq:LDG} with alternating flux \eqref{eq:LDGAltFlux} for the one-dimensional examples and the SIPG method for the final two-dimensional example.

\begin{exmp}\textbf{Accuracy test}\label{ex:accuracy}
\end{exmp}
In this example, we test the accuracy of the ETD-RKDG methods.
We solve the advection-diffusion equation \eqref{eq:AdvDiffEq_Dimless} on the computational domain $\Omega=[0,2\pi]$ with the initial condition $u(x,0)=\sin(x)$ and periodic boundary conditions.
The exact solution for this problem is given by $u(x,t)=e^{-t}\sin(x-t)$.

We use combinations of DG spatial discretizations and ETD-RK methods of different orders.
Uniform meshes are employed, and the time-step size is set to $\tau = h$.
The $L^2$ errors and orders of convergence at the terminal time $T = 1$ for different methods are presented in Table \ref{tab:accuracy}.
From the error table, we observe that the overall order of convergence is determined by the lower of the spatial and temporal accuracy orders, as expected.

\begin{table}[!htbp]
\centering
\begin{tabular}{ccccccccc}
\toprule[1.5pt]
\multicolumn{9}{c}{ETD-RK1} \\
\midrule
& $\mathbb{P}^0$-DG & & $\mathbb{P}^1$-DG & & $\mathbb{P}^2$-DG & & $\mathbb{P}^3$-DG\\
\cline{2-3} \cline{4-5} \cline{6-7} \cline{8-9}  
$h$ & $L^2$ Error & Order & $L^2$ Error & Order & $L^2$ Error & Order& $L^2$ Error & Order\\
\midrule
$\pi/10$ & $1.80\times10^{-1}$ & - & $1.69\times10^{-1}$ & - & $1.72\times10^{-1}$ & - & $1.72\times10^{-1}$ &-\\
$\pi/20$ & $8.62\times10^{-2}$ & $1.06$ & $7.85\times10^{-2}$ & $1.11$ & $7.93\times10^{-2}$ & $1.12$ & $7.93\times10^{-2}$ & $1.12$\\
$\pi/40$ & $4.22\times10^{-2}$ & $1.03$ & $3.77\times10^{-2}$ & $1.06$ & $3.79\times10^{-2}$ & $1.06$ & $3.79\times10^{-2}$ & $1.06$\\
$\pi/80$ & $2.07\times10^{-2}$ & $1.03$ & $1.85\times10^{-2}$ & $1.03$ & $1.85\times10^{-2}$ & $1.03$ & $1.85\times10^{-2}$ & $1.03$\\
\midrule
\multicolumn{9}{c}{ETD-RK2} \\
\midrule
& $\mathbb{P}^0$-DG & & $\mathbb{P}^1$-DG & & $\mathbb{P}^2$-DG & & $\mathbb{P}^3$-DG\\
\cline{2-3} \cline{4-5} \cline{6-7} \cline{8-9}  
$h$ & $L^2$ Error & Order & $L^2$ Error & Order & $L^2$ Error & Order& $L^2$ Error & Order\\
\midrule
$\pi/10$ & $5.96\times10^{-2}$ & - & $1.79\times10^{-2}$ & - & $1.71\times10^{-2}$ & - & $1.70\times10^{-2}$ &-\\
$\pi/20$ & $3.46\times10^{-2}$ & $0.79$ & $4.69\times10^{-3}$ & $1.93$ & $4.29\times10^{-3}$ & $1.99$ & $4.29\times10^{-3}$ & $1.99$\\
$\pi/40$ & $1.87\times10^{-2}$ & $0.89$ & $1.20\times10^{-3}$ & $1.97$ & $1.07\times10^{-3}$ & $2.01$ & $1.07\times10^{-3}$ & $2.01$\\
$\pi/80$ & $9.34\times10^{-3}$ & $1.00$ & $3.00\times10^{-4}$ & $2.00$ & $2.66\times10^{-4}$ & $2.01$ & $2.66\times10^{-4}$ & $2.01$\\
\midrule
\multicolumn{9}{c}{ETD-RK3} \\
\midrule
& $\mathbb{P}^0$-DG & & $\mathbb{P}^1$-DG & & $\mathbb{P}^2$-DG & & $\mathbb{P}^3$-DG\\
\cline{2-3} \cline{4-5} \cline{6-7} \cline{8-9}  
$h$ & $L^2$ Error & Order & $L^2$ Error & Order & $L^2$ Error & Order& $L^2$ Error & Order\\
\midrule
$\pi/10$ & $6.04\times10^{-2}$ & - & $5.84\times10^{-3}$ & - & $1.23\times10^{-3}$ & - & $1.24\times10^{-3}$ &-\\
$\pi/20$ & $3.47\times10^{-2}$ & $0.80$ & $1.56\times10^{-3}$ & $1.91$ & $1.52\times10^{-4}$ & $3.02$ & $1.54\times10^{-4}$ & $3.01$\\
$\pi/40$ & $1.87\times10^{-2}$ & $0.89$ & $4.07\times10^{-4}$ & $1.94$ & $1.87\times10^{-5}$ & $3.02$ & $1.89\times10^{-5}$ & $3.02$\\
$\pi/80$ & $9.35\times10^{-3}$ & $1.00$ & $1.00\times10^{-4}$ & $2.02$ & $2.32\times10^{-6}$ & $3.01$ & $2.35\times10^{-6}$ & $3.01$\\
\midrule
\multicolumn{9}{c}{ETD-RK4} \\
\midrule
& $\mathbb{P}^0$-DG & & $\mathbb{P}^1$-DG & & $\mathbb{P}^2$-DG& & $\mathbb{P}^3$-DG\\
\cline{2-3} \cline{4-5} \cline{6-7} \cline{8-9}  
$h$ & $L^2$ Error & Order & $L^2$ Error & Order & $L^2$ Error & Order & $L^2$ Error & Order \\
\midrule
$\pi/10$ & $6.05\times10^{-2}$ & - & $5.05\times10^{-3}$ & - & $1.45\times10^{-4}$ & - & $1.09\times10^{-4}$ &-\\
$\pi/20$ & $3.47\times10^{-2}$ & $0.80$ & $1.47\times10^{-3}$ & $1.78$ & $1.74\times10^{-5}$ & $3.06$ & $6.76\times10^{-6}$ & $4.00$\\
$\pi/40$ & $1.87\times10^{-2}$ & $0.89$ & $3.98\times10^{-4}$ & $1.89$ & $2.29\times10^{-6}$ & $2.93$ & $4.19\times10^{-7}$ & $4.01$\\
$\pi/80$ & $9.35\times10^{-3}$ & $1.00$ & $9.92\times10^{-5}$ & $2.00$ & $2.87\times10^{-7}$ & $3.00$ & $2.60\times10^{-8}$ & $4.01$\\
\bottomrule[1.5pt]
\end{tabular}
\caption{\textbf{Example \ref{ex:accuracy}. Accuracy test.} 
The $L^2$ errors and orders of convergence for solving the linear advection-diffusion equation \eqref{eq:AdvDiffEq_Dimless} using different ETD-RKDG methods. 
Uniform meshes are employed, and the time-step size is set to $\tau = h$.}
\label{tab:accuracy}
\end{table}

\begin{exmp}\textbf{Stability test}\label{ex:stability}
\end{exmp}
In this example, we test the stability of the ETD-RKDG methods.
We solve the advection-dominated equation \eqref{eq:AdvDiffEq} with $a=1$ and $d=0.01$ on the computational domain $[0, 2\pi]$ with the initial condition $u(x,0)=\sin(x)$ and periodic boundary conditions. 
The exact solution for this problem is given by $u(x,t)=e^{-0.01t}\sin(x-t)$.

We use combinations of DG spatial discretizations and ETD-RK methods of different orders.
A uniform mesh with $h=\frac{\pi}{1000}$ is employed.
The time-step sizes are set to the critical values $\tau=\tau_0\frac{d}{a^2}$, with $\tau_0 = 2, 3.93, 4.55$, and $4.81$ for ETD-RK1, ETD-RK2, ETD-RK3, and ETD-RK4, respectively.
These time-step sizes are significantly larger than the mesh size $h$.

The growth of the maximum norm of the numerical solutions over time is presented in log scale in Figure \ref{fig:stability}, along with results obtained using slightly larger time-steps $\tau=1.1\times\tau_0\frac{d}{a^2}$. 
As shown in the figure, the numerical solutions are stable under the time-step constraint $\tau=\tau_0\frac{d}{a^2}$, but they exhibit instability with $10\%$ larger time-steps.
Interestingly, the decay of the maximum norm of the solutions for ETD-RK1 in Figure \ref{fig:stab-ETD-RK1} is almost invisible, unlike those for ETD-RK2, ETD-RK3, and ETD-RK4, which decay at the correct rate of $e^{-0.01t}$.
This is because the value $\tau_0=2$ is sharp for ETD-RK1 to maintain stability, but the temporal accuracy is relatively low. 
We also observe that most numerical solutions with $\tau=1.1\times\tau_0\frac{d}{a^2}$ blow up in the simulation, with only a few exceptions. However, these exceptions are not inherently stable, as they can still exhibit instability when a different mesh size $h$ (e.g., $h=\frac{\pi}{100}$) is used in our experiments.

\begin{figure}[!htbp]
 \centering
 \begin{subfigure}[b]{0.3\textwidth}
  \includegraphics[width=\textwidth]{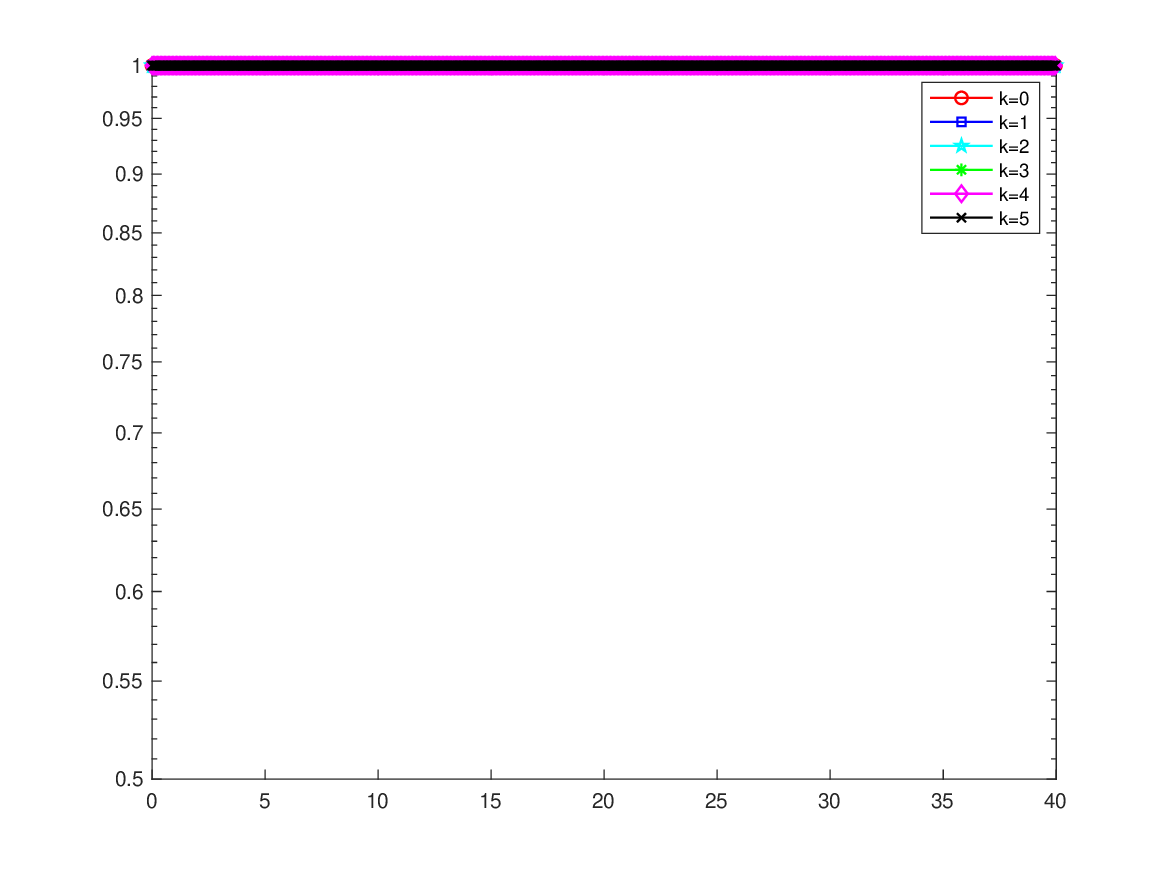}
  \caption{ETD-RK1, $\tau=\tau_0 \frac{d}{a^2}$}\label{fig:stab-ETD-RK1}
 \end{subfigure}
 \begin{subfigure}[b]{0.3\textwidth}
  \includegraphics[width=\textwidth]{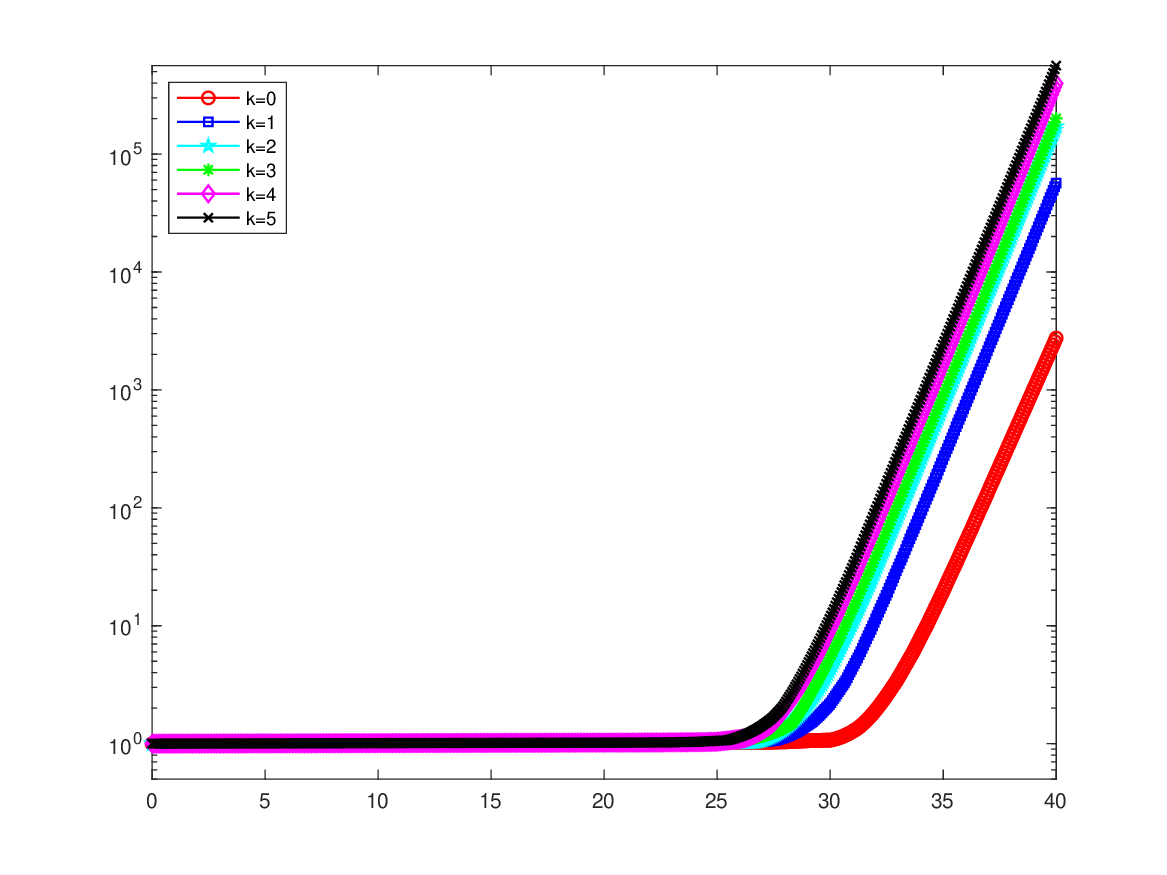}
  \caption{ETD-RK1, $\tau=1.1\times\tau_0 \frac{d}{a^2}$}
 \end{subfigure}

 \begin{subfigure}[b]{0.3\textwidth}
  \includegraphics[width=\textwidth]{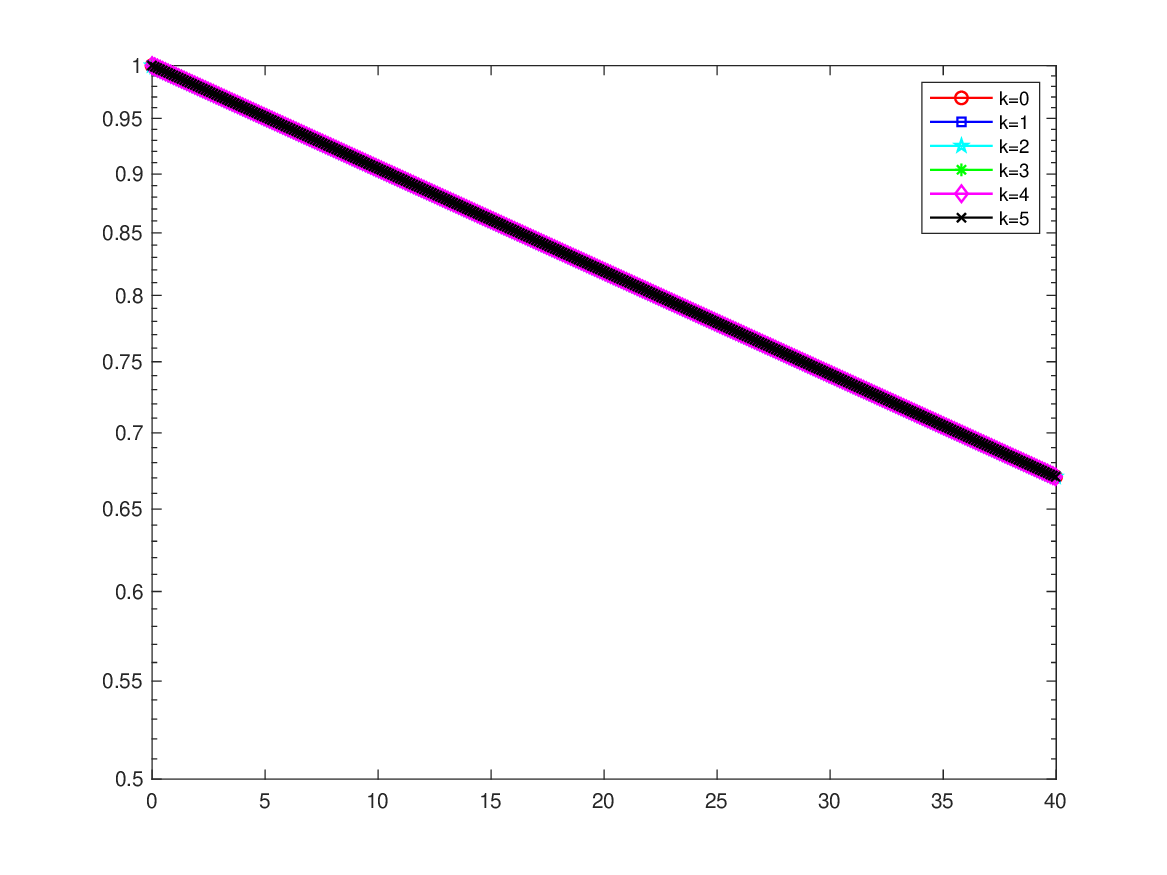}
  \caption{ETD-RK2, $\tau=\tau_0 \frac{d}{a^2}$}
 \end{subfigure}
 \begin{subfigure}[b]{0.3\textwidth}
  \includegraphics[width=\textwidth]{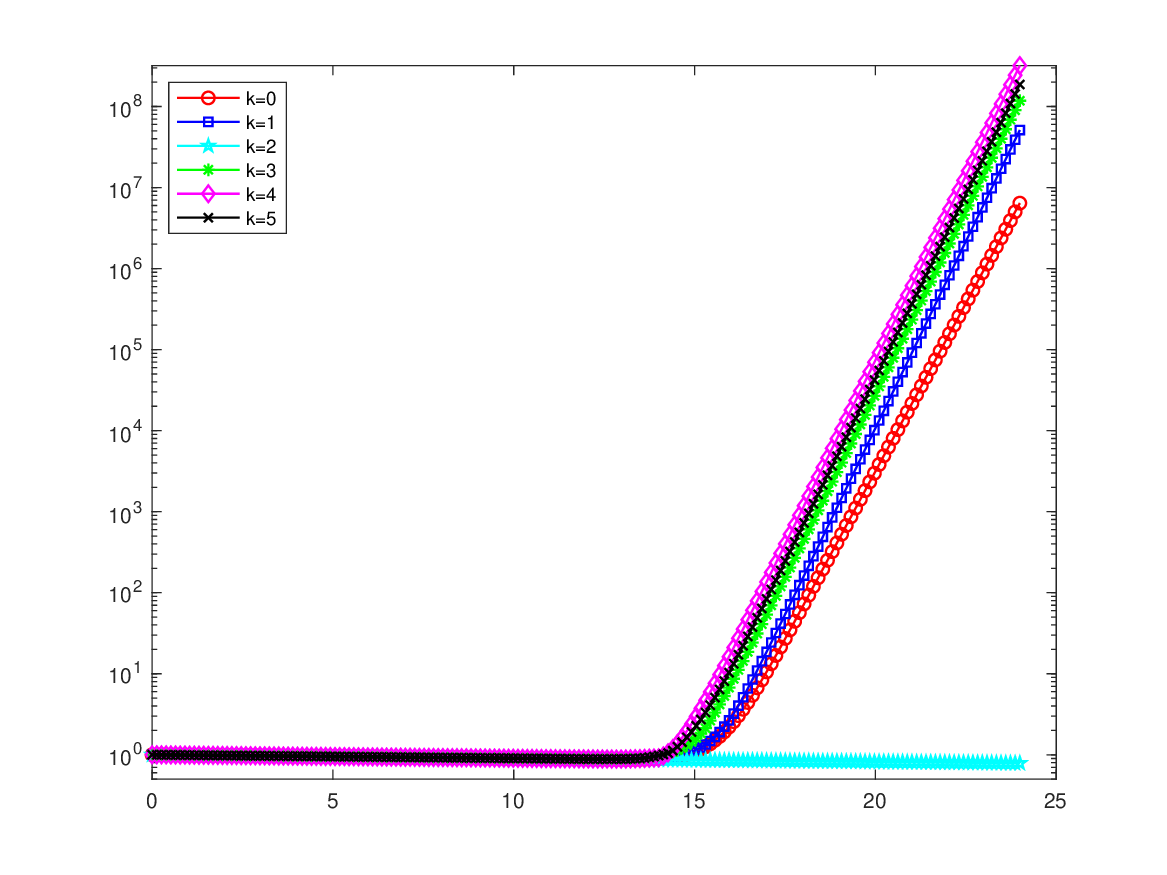}
  \caption{ETD-RK2, $\tau=1.1\times\tau_0 \frac{d}{a^2}$}
 \end{subfigure}

 \begin{subfigure}[b]{0.3\textwidth}
  \includegraphics[width=\textwidth]{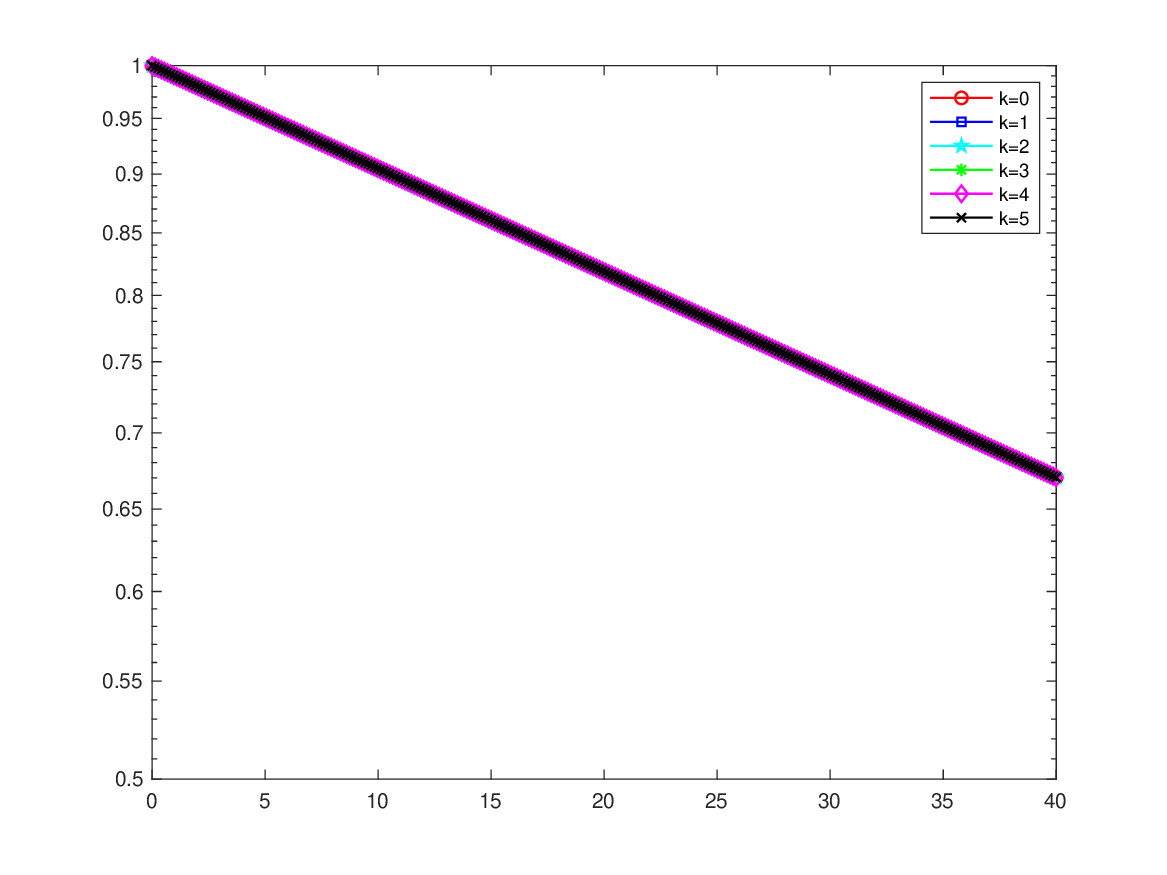}
  \caption{ETD-RK3, $\tau=\tau_0 \frac{d}{a^2}$}
 \end{subfigure}
 \begin{subfigure}[b]{0.3\textwidth}
  \includegraphics[width=\textwidth]{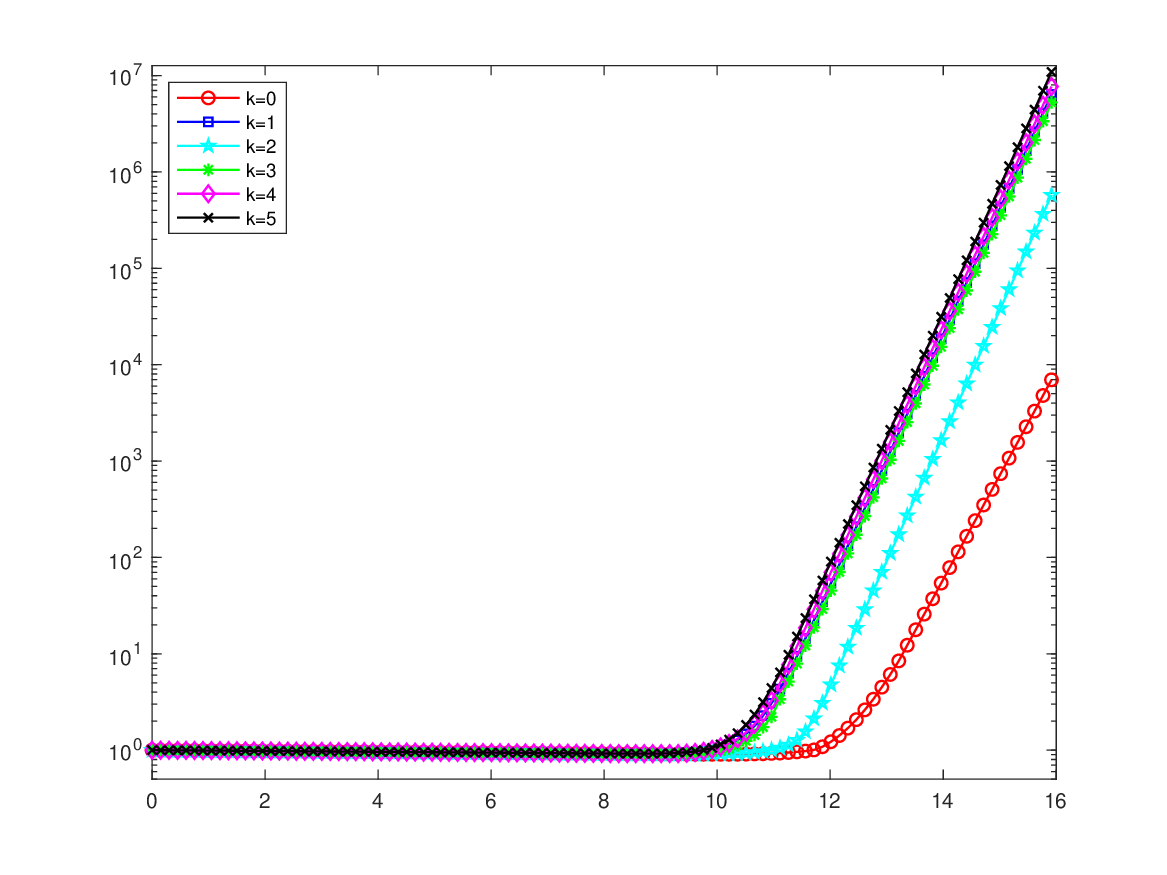}
  \caption{ETD-RK3, $\tau=1.1\times\tau_0 \frac{d}{a^2}$}
 \end{subfigure}

 \begin{subfigure}[b]{0.3\textwidth}
  \includegraphics[width=\textwidth]{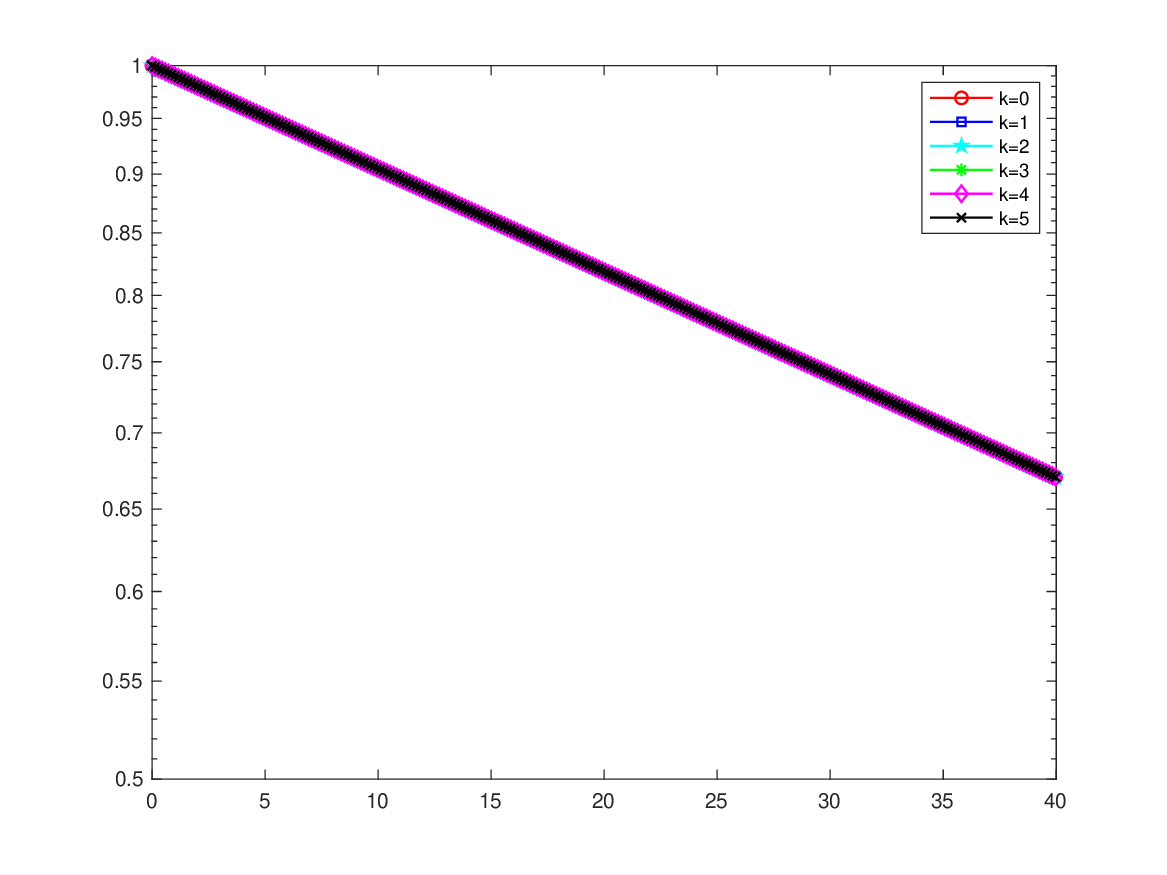}
  \caption{ETD-RK4, $\tau=\tau_0 \frac{d}{a^2}$}
 \end{subfigure}
 \begin{subfigure}[b]{0.3\textwidth}
  \includegraphics[width=\textwidth]{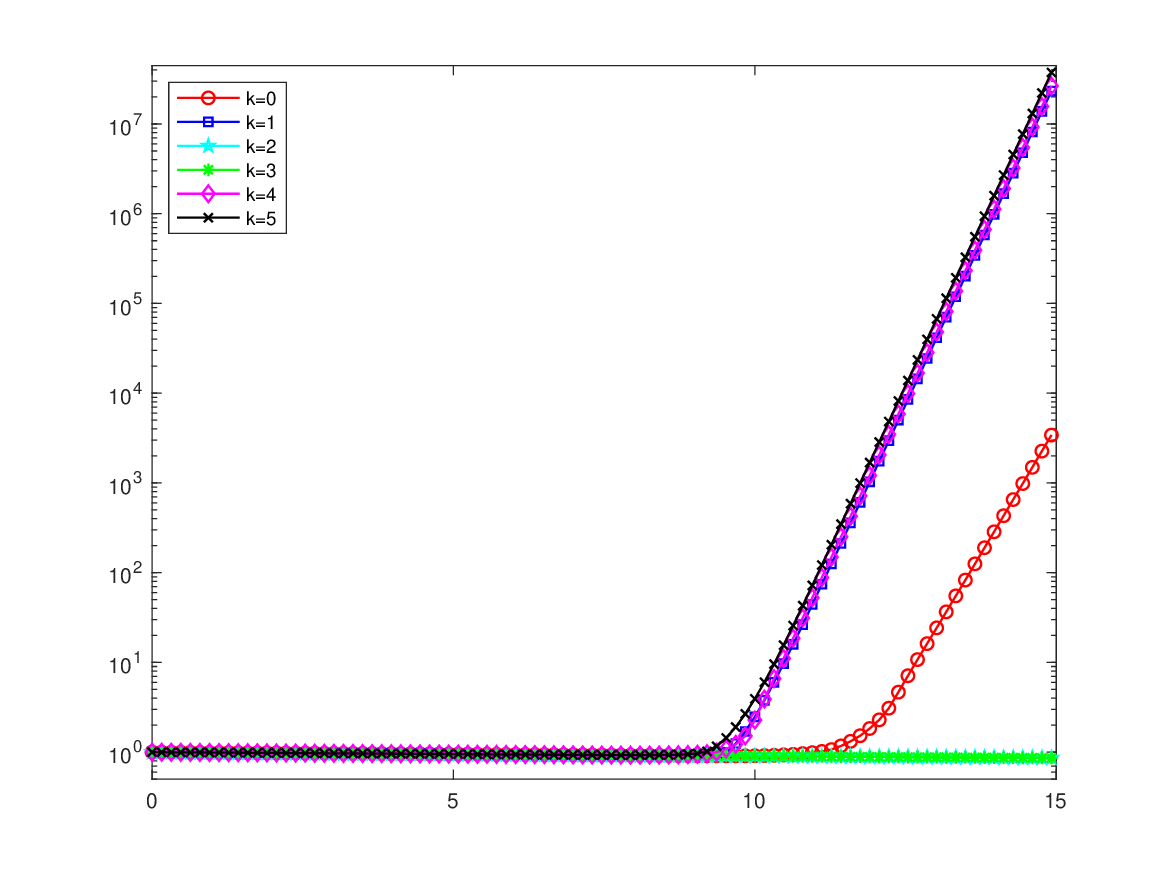}
  \caption{ETD-RK4, $\tau=1.1\times\tau_0 \frac{d}{a^2}$}
 \end{subfigure}
 \caption{\textbf{Example \ref{ex:stability}. Stability test.} 
 The growth of the maximum norm of the solutions for the advection-dominated problem over time using different ETD-RKDG methods.
 A uniform mesh with $h=\frac{\pi}{1000}$ is employed.
The time-step sizes are set to the stable values $\tau=\tau_0\frac{d}{a^2}$ in the left column and the unstable values $\tau=1.1\times\tau_0 \frac{d}{a^2}$ in the right column, where $\tau_0 = 2, 3.93, 4.55$, and $4.81$ for ETD-RK1, ETD-RK2, ETD-RK3, and ETD-RK4, respectively.}
 \label{fig:stability}
\end{figure}

\begin{exmp}\textbf{$h$-$p$ variation test}\label{ex:adaptive}
\end{exmp}
As observed in the previous analysis and experiments, the time-step constraints for the stability of ETD-RKDG methods are independent of the mesh size \(h\) and  the polynomial degree \(k\).
Therefore, the $h$-$p$ adaptive scenario, involving locally refined meshes or enhanced polynomial degrees, may be particularly suitable for the ETD-RKDG methods.
To explore the potential of the ETD-RK methods in these aspects, we test the stability of the ETD-RKDG methods using nonuniform meshes and nonuniform polynomial degrees in this example.
The same advection-dominated problem studied in Example \ref{ex:stability} is considered in this test.

We first solve the problem on a nonuniform mesh with uniform polynomial degrees. 
The mesh contains $N=2000$ cells with mesh sizes $\Delta x_{2m-1}:\Delta x_{2m}=1:9$, for $m=1,2,\ldots,1000$.
The time-step sizes are set to the critical values $\tau=\tau_0\frac{d}{a^2}$, with $\tau_0 = 2, 3.93, 4.55$, and $4.81$ for ETD-RK1, ETD-RK2, ETD-RK3, and ETD-RK4, respectively.
These time-step sizes are significantly larger than the minimal mesh size.

Next, we solve the problem on a uniform mesh with $N=2000$ and nonuniform polynomial degrees.
Polynomials of degrees $p_{2m-1}:p_{2m}=r:k$ are used on cells $I_{2m-1}$ and $I_{2m}$, respectively, for $m=1,2,\ldots,1000$ in the ETD-RK$r$ methods, where $r=1,2,3,4$ and $k=0,1,\ldots, 5$.
The time-step sizes are again set to the critical values $\tau=\tau_0\frac{d}{a^2}$, with $\tau_0 = 2, 3.93, 4.55$, and $4.81$ for ETD-RK1, ETD-RK2, ETD-RK3, and ETD-RK4, respectively.

The growth of the maximum norm of the numerical solutions over time is presented in log scale in Figure \ref{fig:h_adaptive} and Figure \ref{fig:p_adaptive}, for the nonuniform mesh and nonuniform polynomial degrees cases, respectively.
The figures also include results for the unstable counterpart with time-step sizes $\tau=1.1\times\tau_0\frac{d}{a^2}$.
From the figures, we observe that the numerical solutions are stable with the time-step sizes $\tau=\tau_0\frac{d}{a^2}$, regardless of the nonuniformity of the mesh sizes or polynomial degrees, while instability is evident with a $10\%$ increase in the time-step sizes.
\begin{figure}[!htbp]
 \centering
 \begin{subfigure}[b]{0.3\textwidth}
  \includegraphics[width=\textwidth]{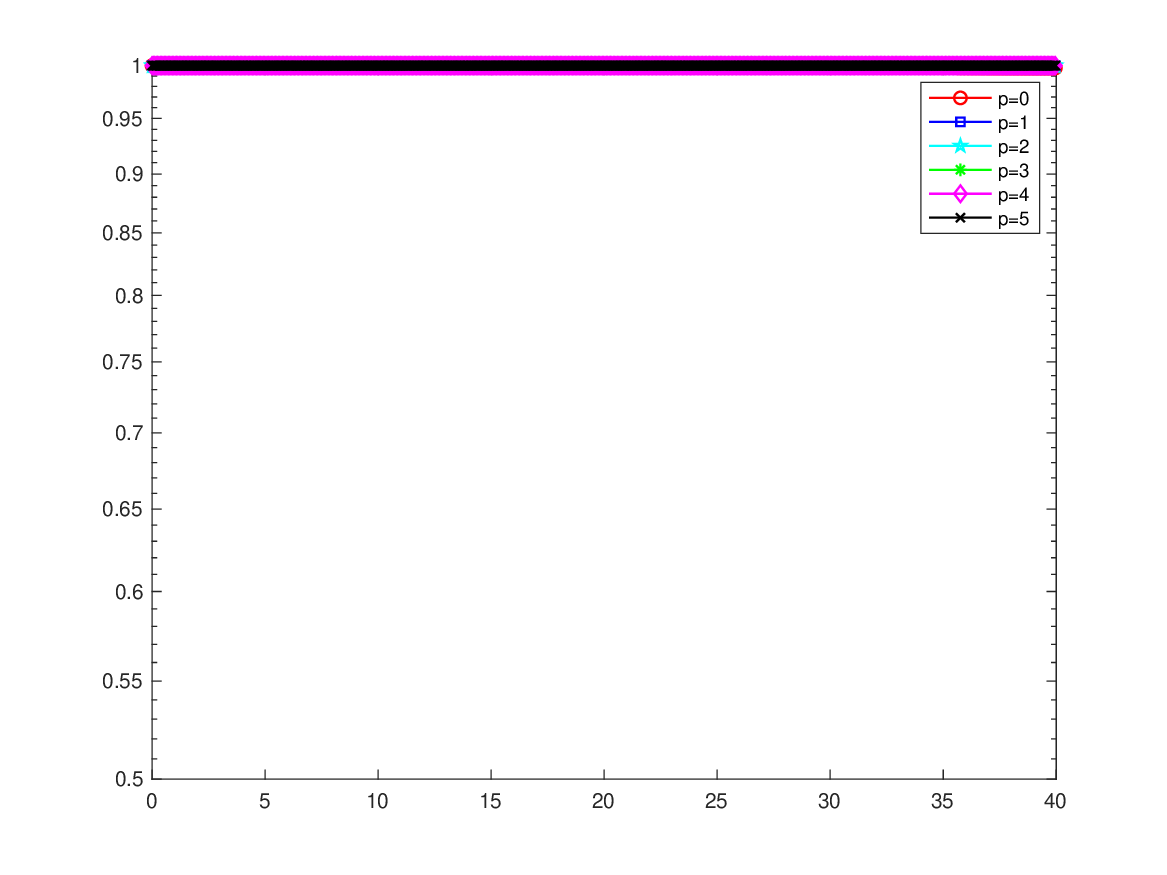}
  \caption{ETD-RK1, $\tau=\tau_0\frac{d}{a^2}$}
 \end{subfigure}
 \begin{subfigure}[b]{0.3\textwidth}
  \includegraphics[width=\textwidth]{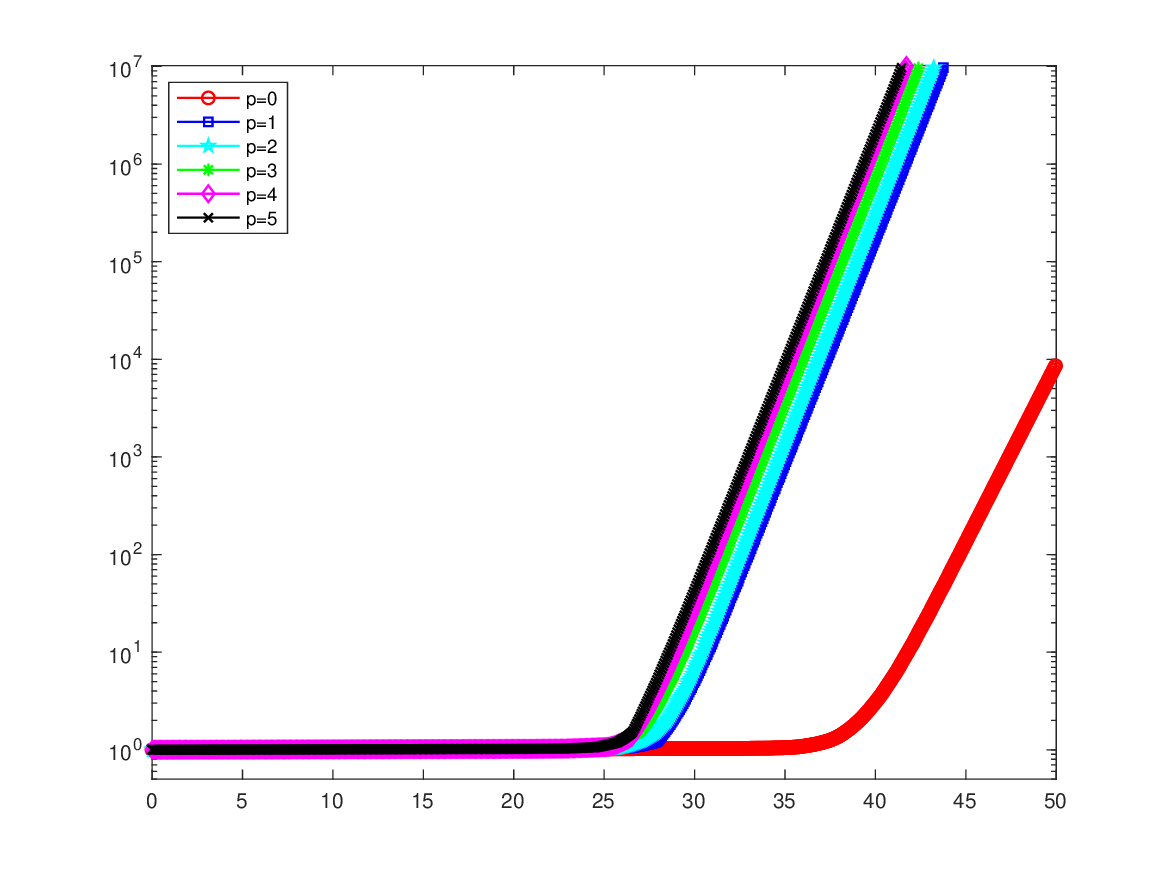}
  \caption{ETD-RK1, $\tau=1.1\times\tau_0\frac{d}{a^2}$}
 \end{subfigure}

 \begin{subfigure}[b]{0.3\textwidth}
  \includegraphics[width=\textwidth]{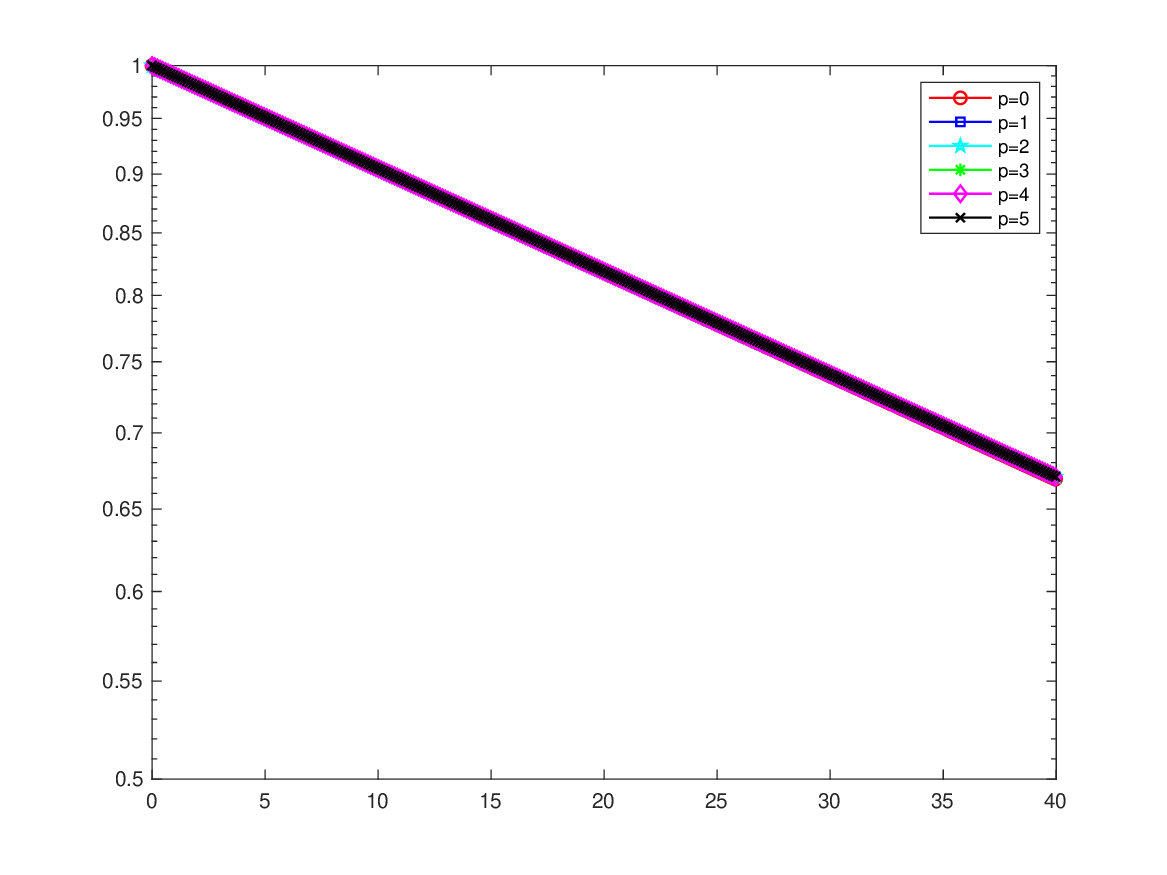}
  \caption{ETD-RK2, $\tau=\tau_0\frac{d}{a^2}$}
 \end{subfigure}
 \begin{subfigure}[b]{0.3\textwidth}
  \includegraphics[width=\textwidth]{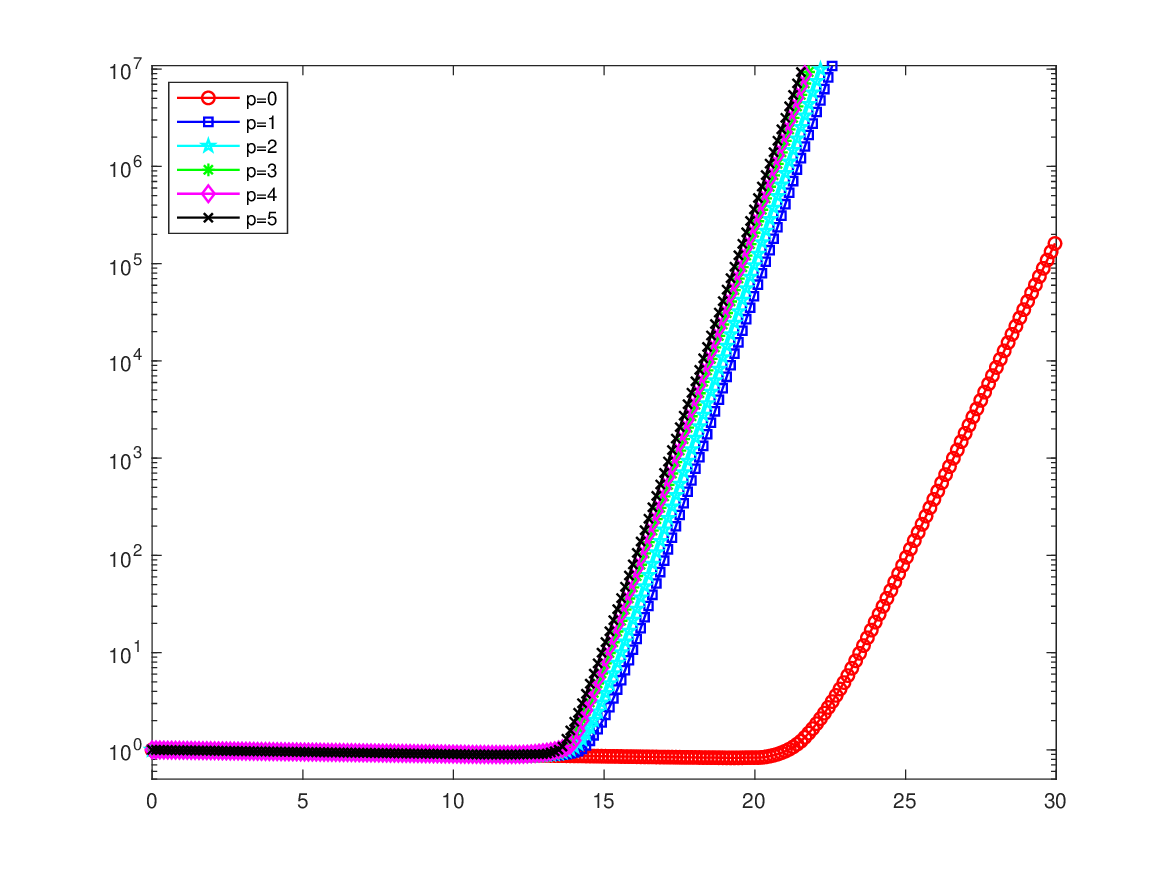}
  \caption{ETD-RK2, $\tau=1.1\times\tau_0\frac{d}{a^2}$}
 \end{subfigure}

 \begin{subfigure}[b]{0.3\textwidth}
  \includegraphics[width=\textwidth]{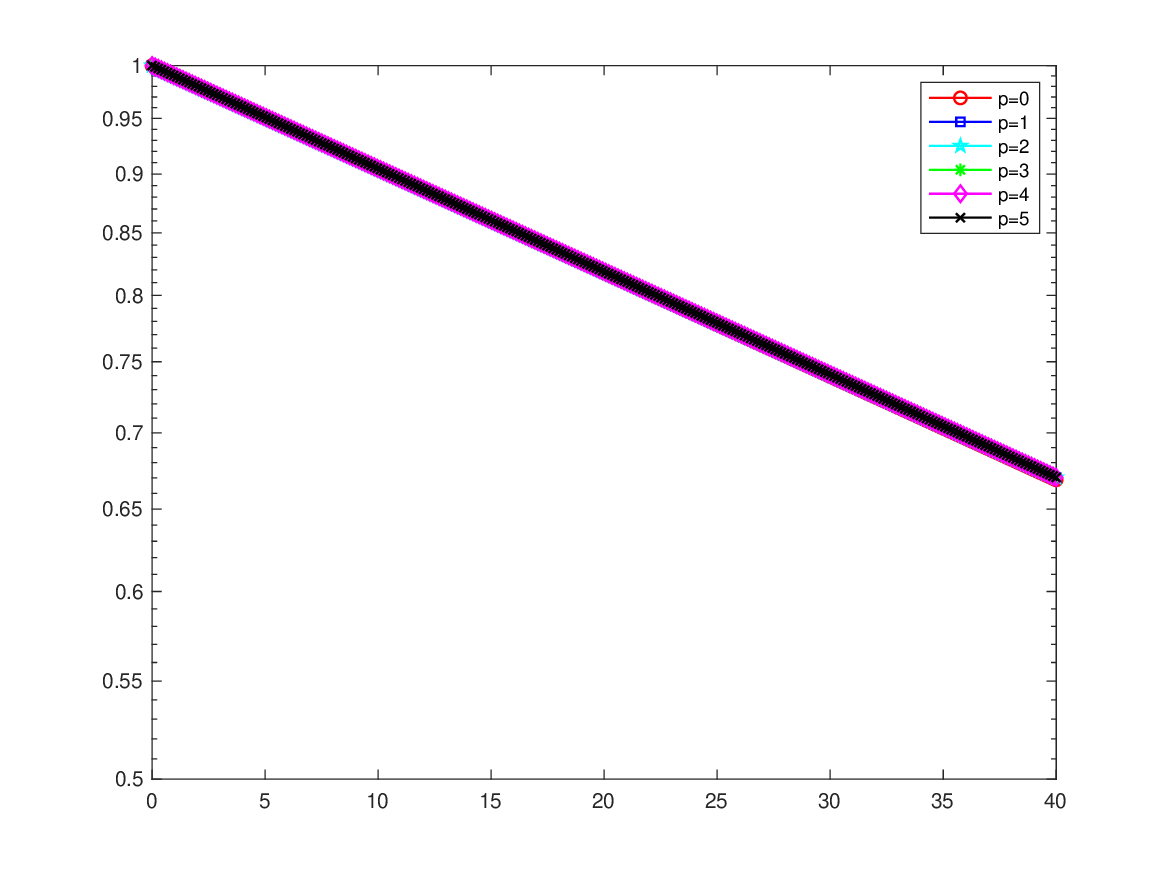}
  \caption{ETD-RK3, $\tau=\tau_0\frac{d}{a^2}$}
 \end{subfigure}
 \begin{subfigure}[b]{0.3\textwidth}
  \includegraphics[width=\textwidth]{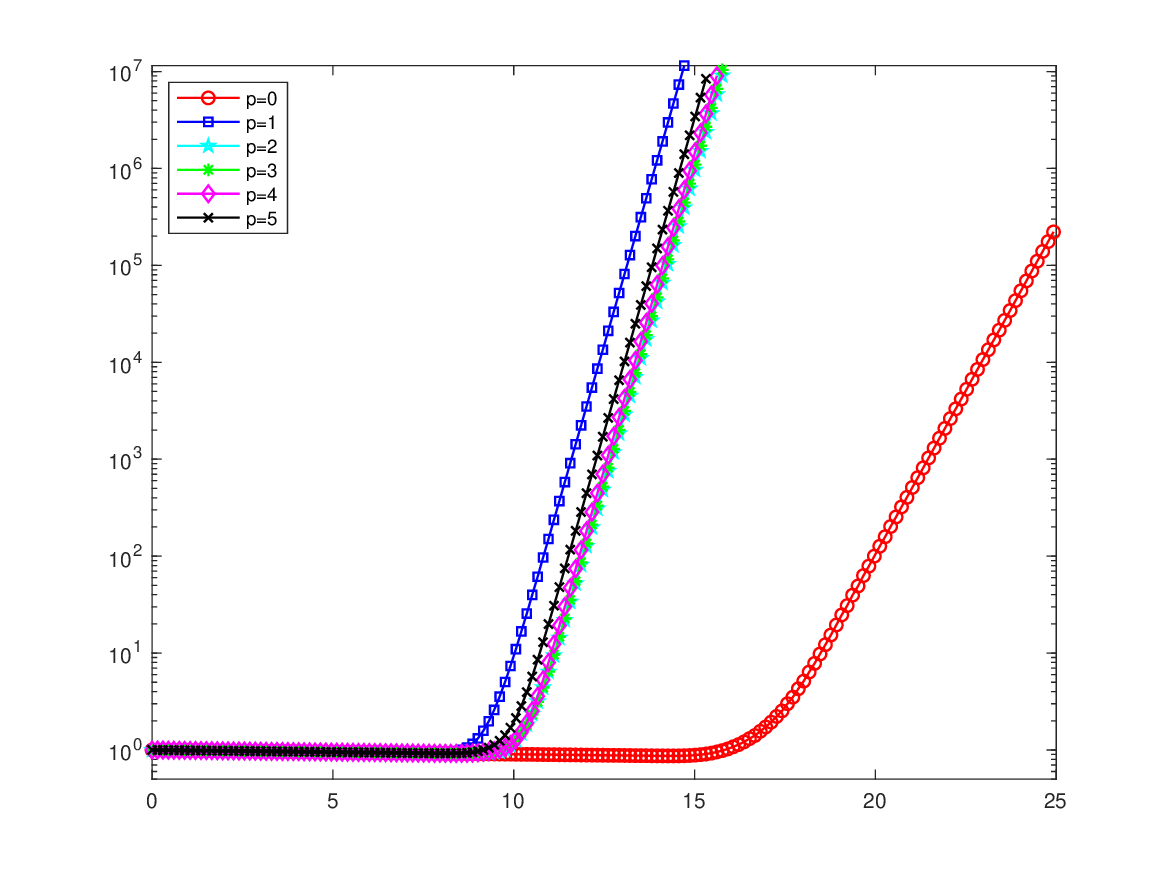}
  \caption{ETD-RK3, $\tau=1.1\times\tau_0\frac{d}{a^2}$}
 \end{subfigure}

 \begin{subfigure}[b]{0.3\textwidth}
  \includegraphics[width=\textwidth]{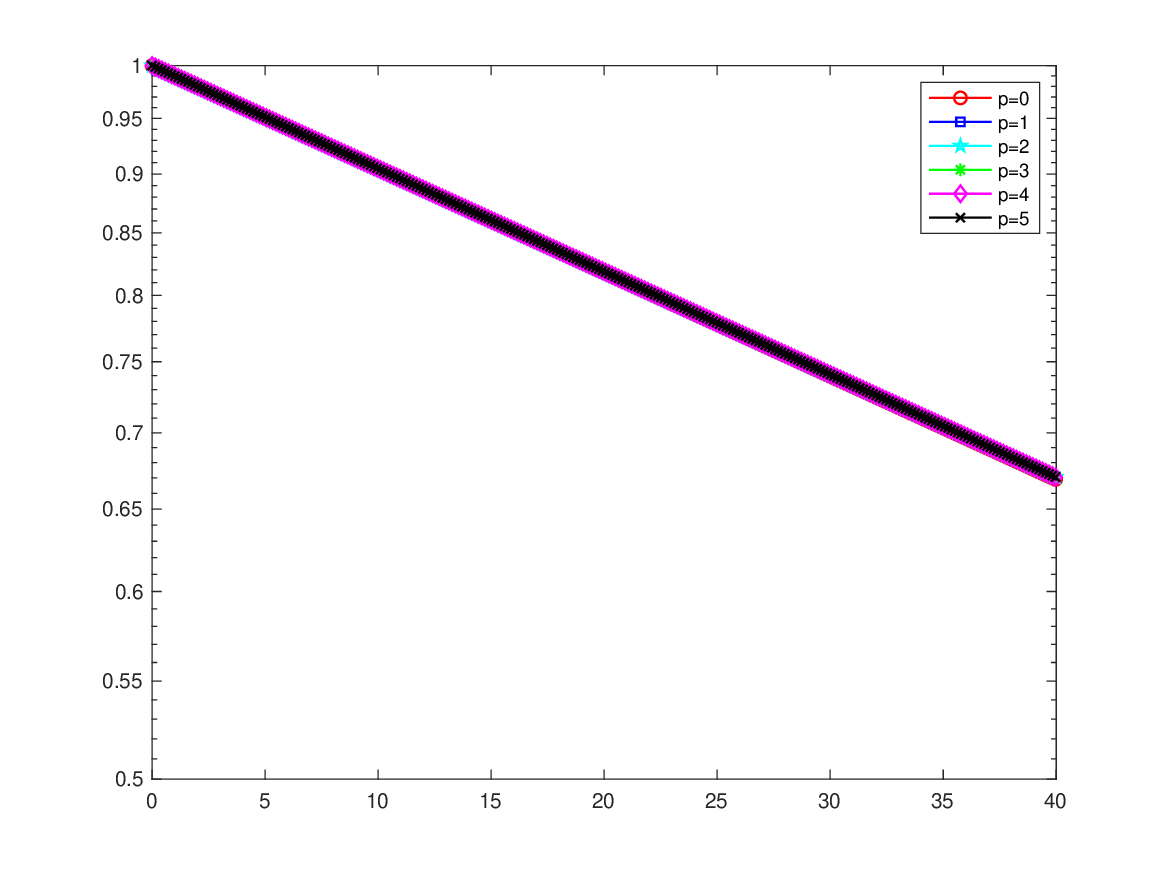}
  \caption{ETD-RK4, $\tau=\tau_0\frac{d}{a^2}$}
 \end{subfigure}
 \begin{subfigure}[b]{0.3\textwidth}
  \includegraphics[width=\textwidth]{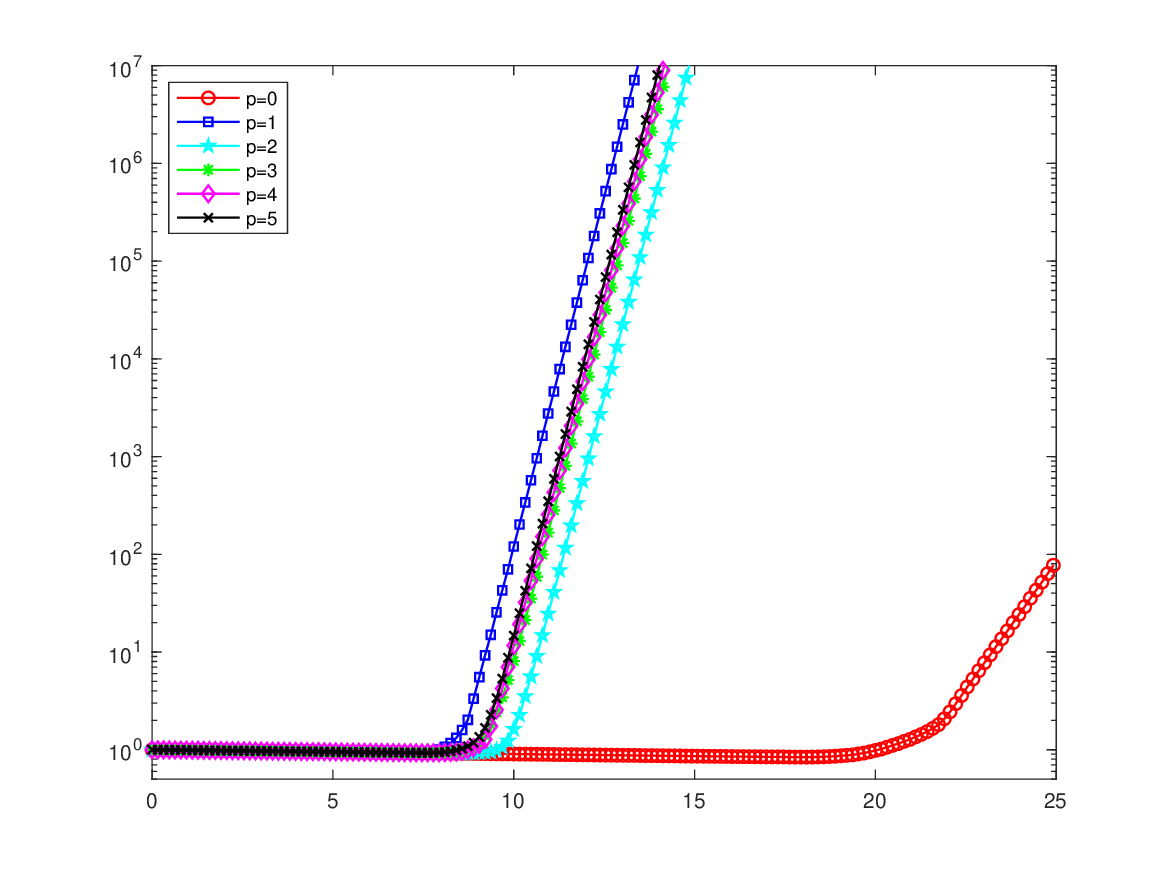}
  \caption{ETD-RK4, $\tau=1.1\times\tau_0\frac{d}{a^2}$}
 \end{subfigure}
 \caption{\textbf{Example \ref{ex:adaptive}. $h$ variation test.} 
The growth of the maximum norm of the solutions for the advection-dominated problem over time using different ETD-RKDG methods.
A nonuniform mesh with $\Delta x_{2m-1}:\Delta x_{2m}=1:9$ for $m=1,2,\ldots,1000$ is employed.
The time-step sizes are set to the stable values $\tau=\tau_0\frac{d}{a^2}$ in the left column and the unstable values $\tau=1.1\times\tau_0 \frac{d}{a^2}$ in the right column, where $\tau_0 = 2, 3.93, 4.55$, and $4.81$ for ETD-RK1, ETD-RK2, ETD-RK3, and ETD-RK4, respectively.}
 \label{fig:h_adaptive}
\end{figure}

\begin{figure}[!htbp]
 \centering
 \begin{subfigure}[b]{0.3\textwidth}
  \includegraphics[width=\textwidth]{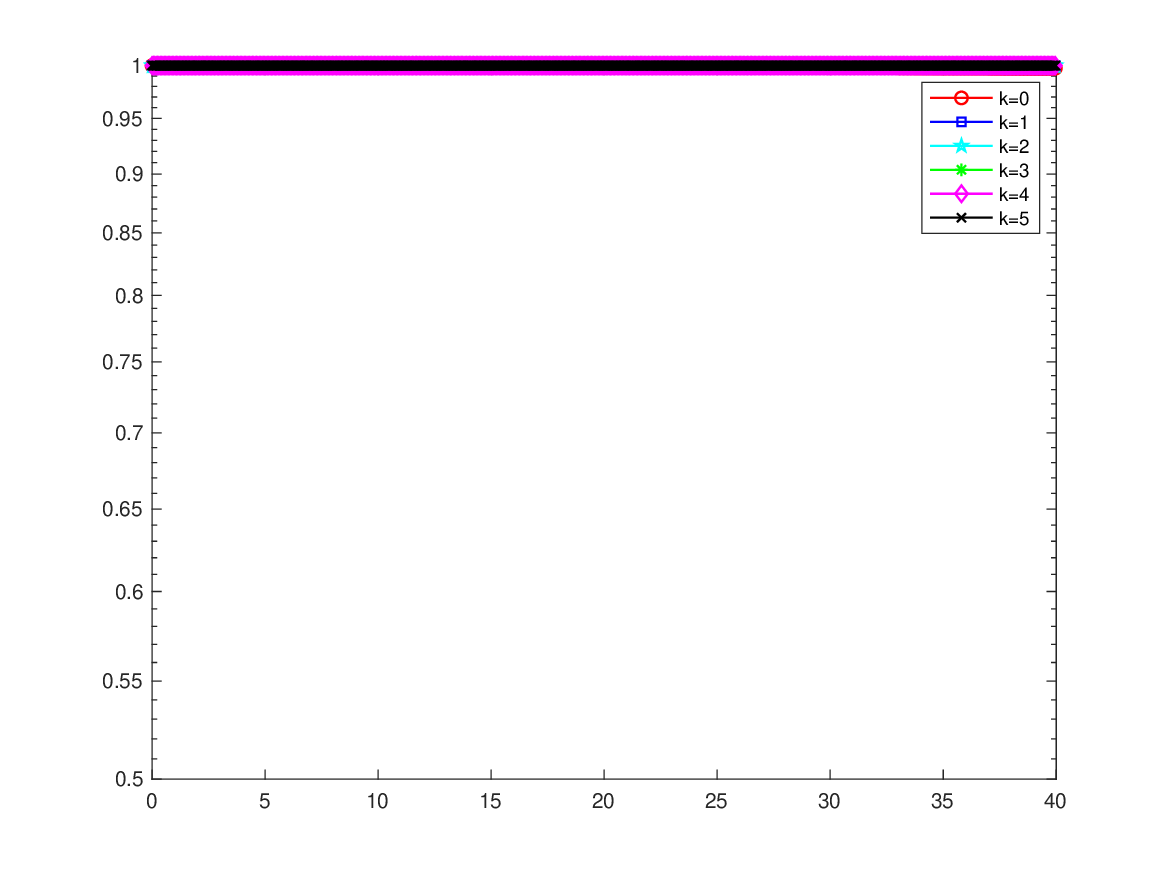}
  \caption{ETD-RK1, $\tau=\tau_0\frac{d}{a^2}$}
 \end{subfigure}
 \begin{subfigure}[b]{0.3\textwidth}
  \includegraphics[width=\textwidth]{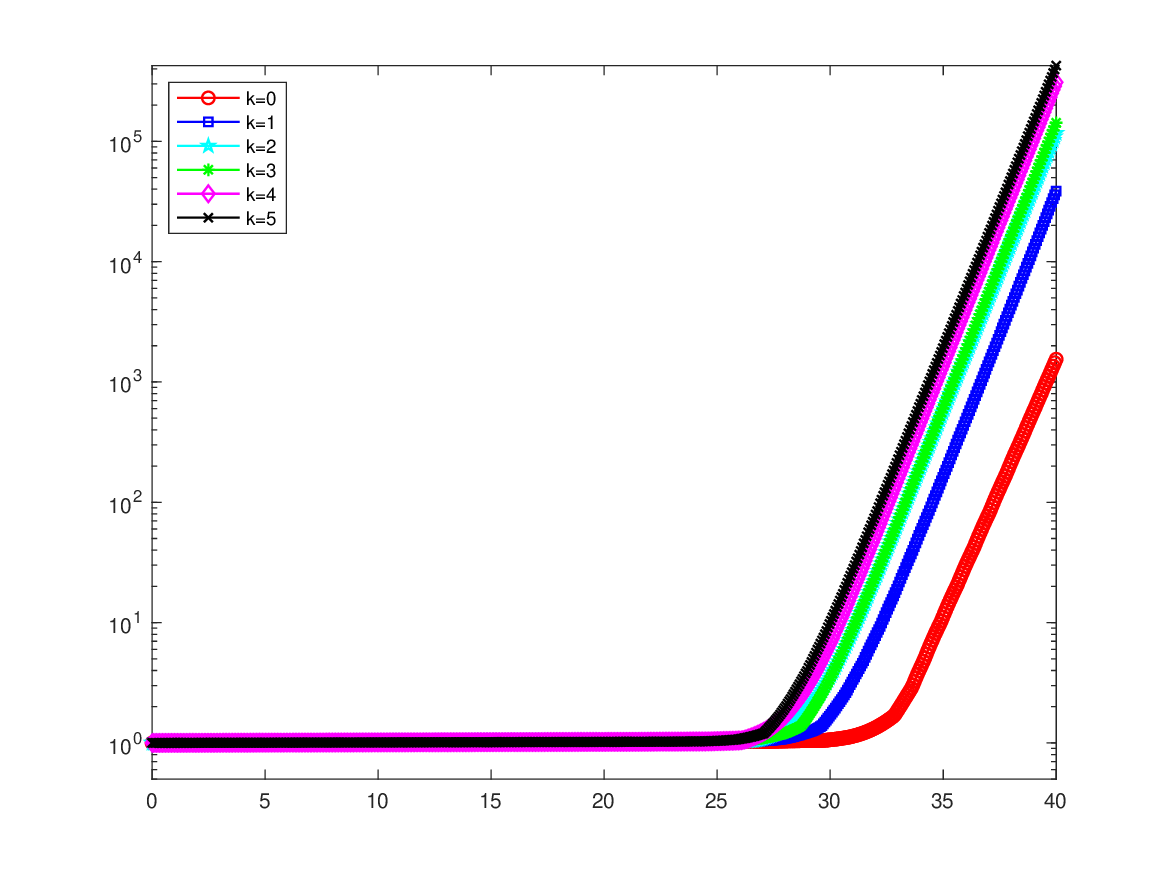}
  \caption{ETD-RK1, $\tau=1.1\times\tau_0 \frac{d}{a^2}$}
 \end{subfigure}

 \begin{subfigure}[b]{0.3\textwidth}
  \includegraphics[width=\textwidth]{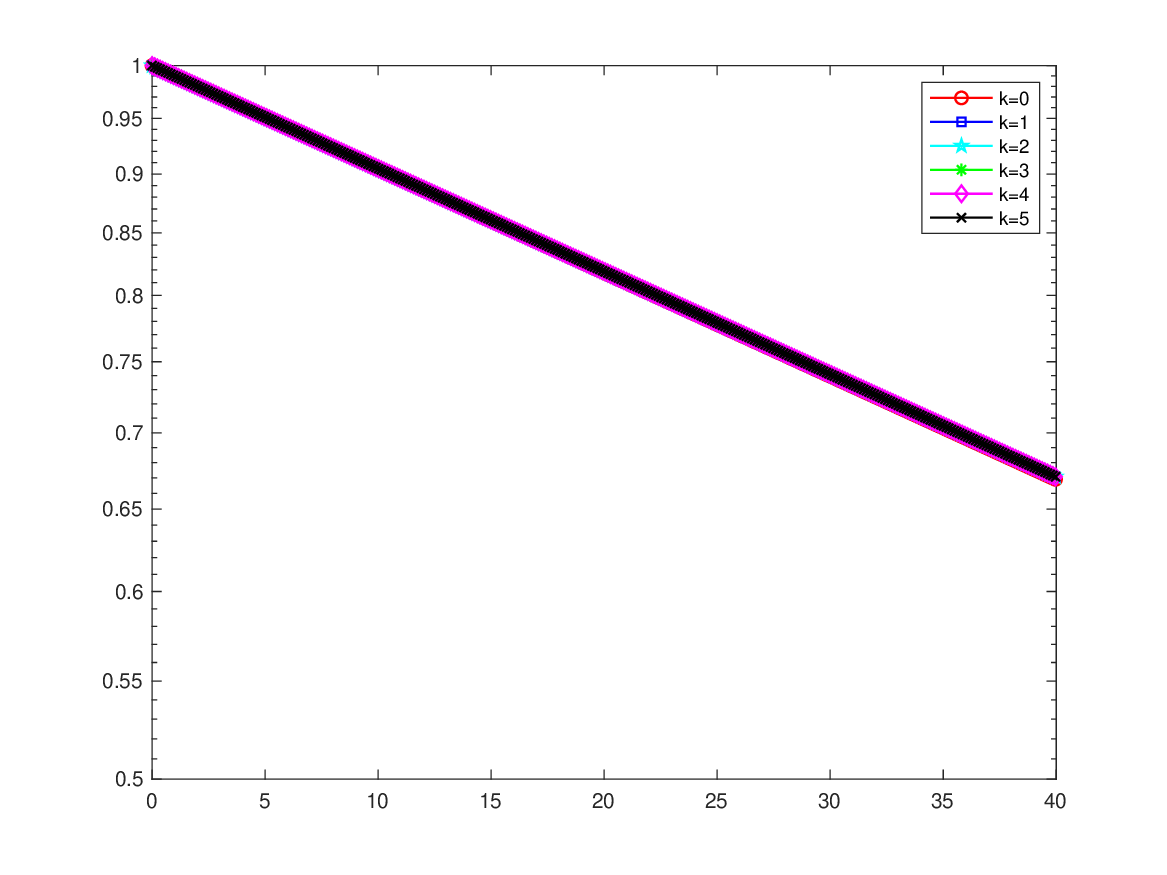}
  \caption{ETD-RK2, $\tau=\tau_0 \frac{d}{a^2}$}
 \end{subfigure}
 \begin{subfigure}[b]{0.3\textwidth}
  \includegraphics[width=\textwidth]{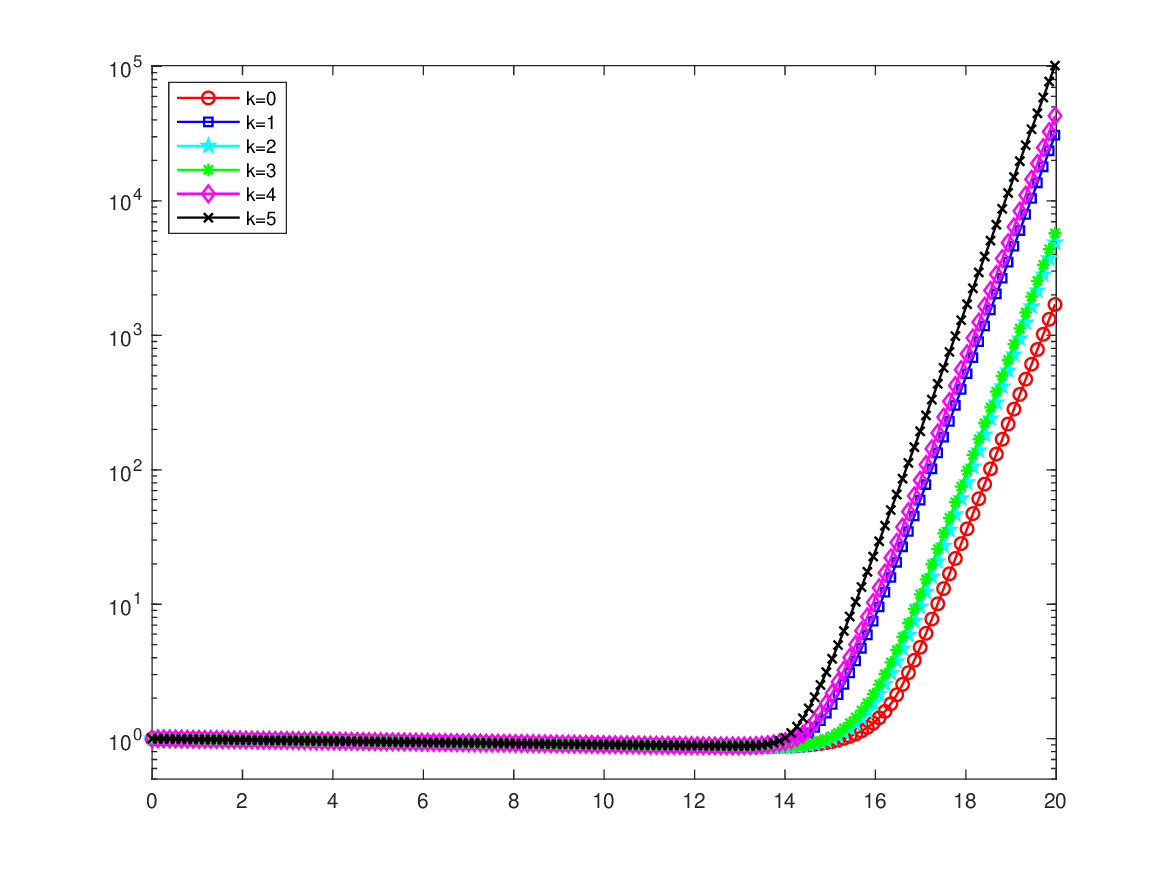}
  \caption{ETD-RK2, $\tau=1.1\times\tau_0 \frac{d}{a^2}$}
 \end{subfigure}

 \begin{subfigure}[b]{0.3\textwidth}
  \includegraphics[width=\textwidth]{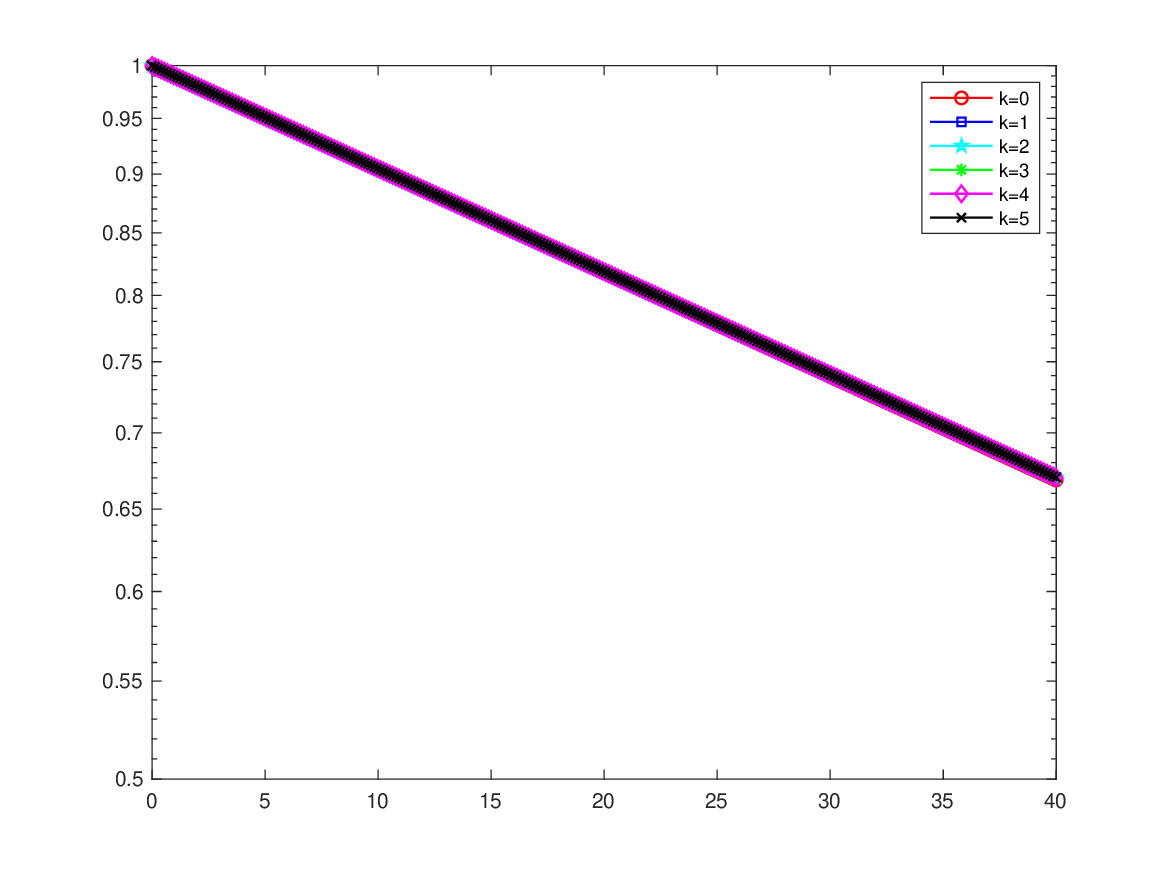}
  \caption{ETD-RK3, $\tau=\tau_0 \frac{d}{a^2}$}
 \end{subfigure}
 \begin{subfigure}[b]{0.3\textwidth}
  \includegraphics[width=\textwidth]{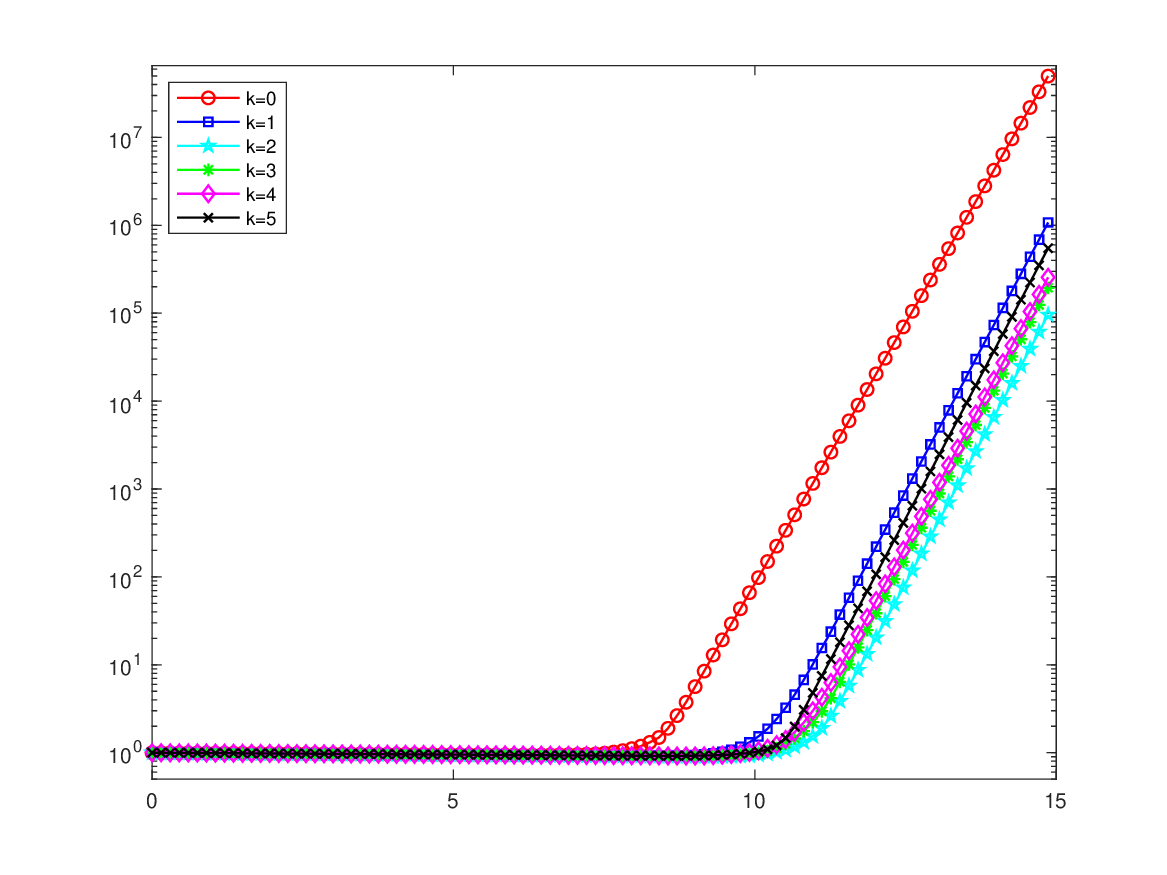}
  \caption{ETD-RK3, $\tau=1.1\times\tau_0 \frac{d}{a^2}$}
 \end{subfigure}

 \begin{subfigure}[b]{0.3\textwidth}
  \includegraphics[width=\textwidth]{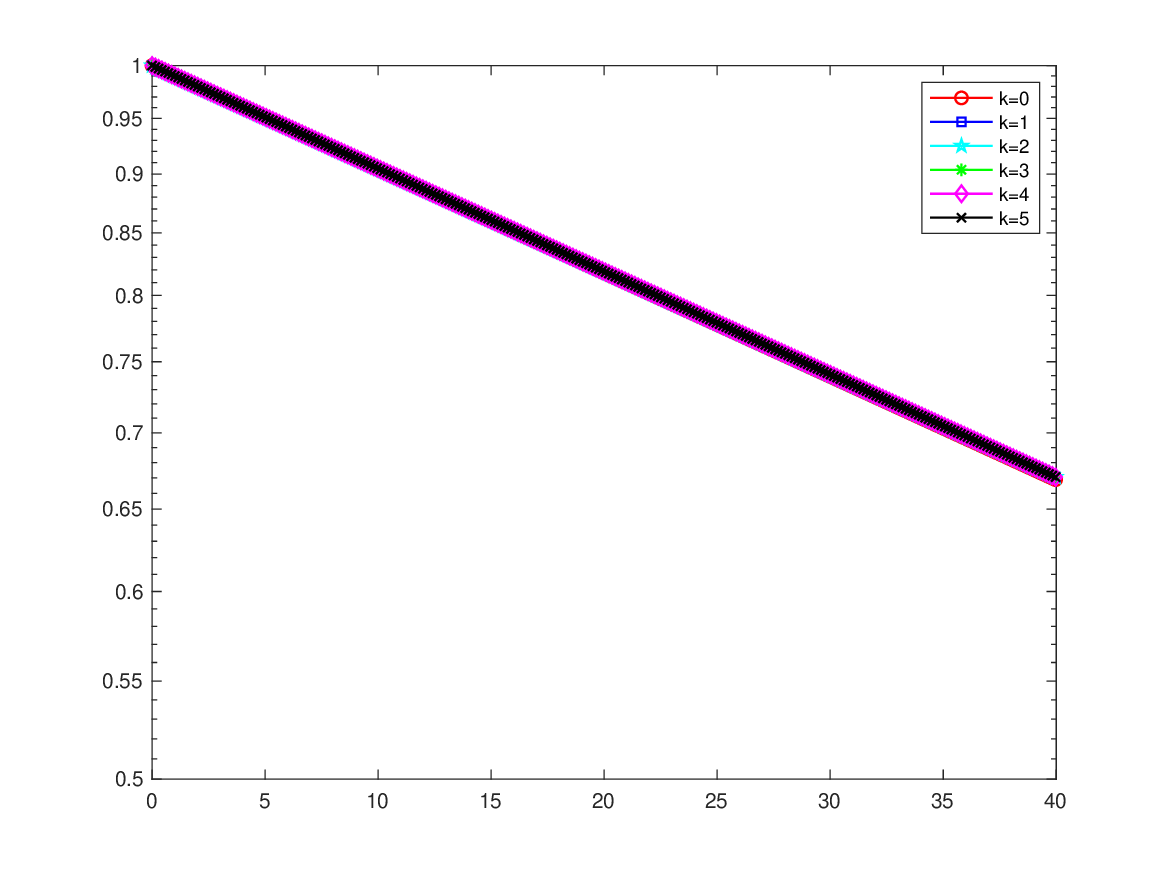}
  \caption{ETD-RK4, $\tau=\tau_0 \frac{d}{a^2}$}
 \end{subfigure}
 \begin{subfigure}[b]{0.3\textwidth}
  \includegraphics[width=\textwidth]{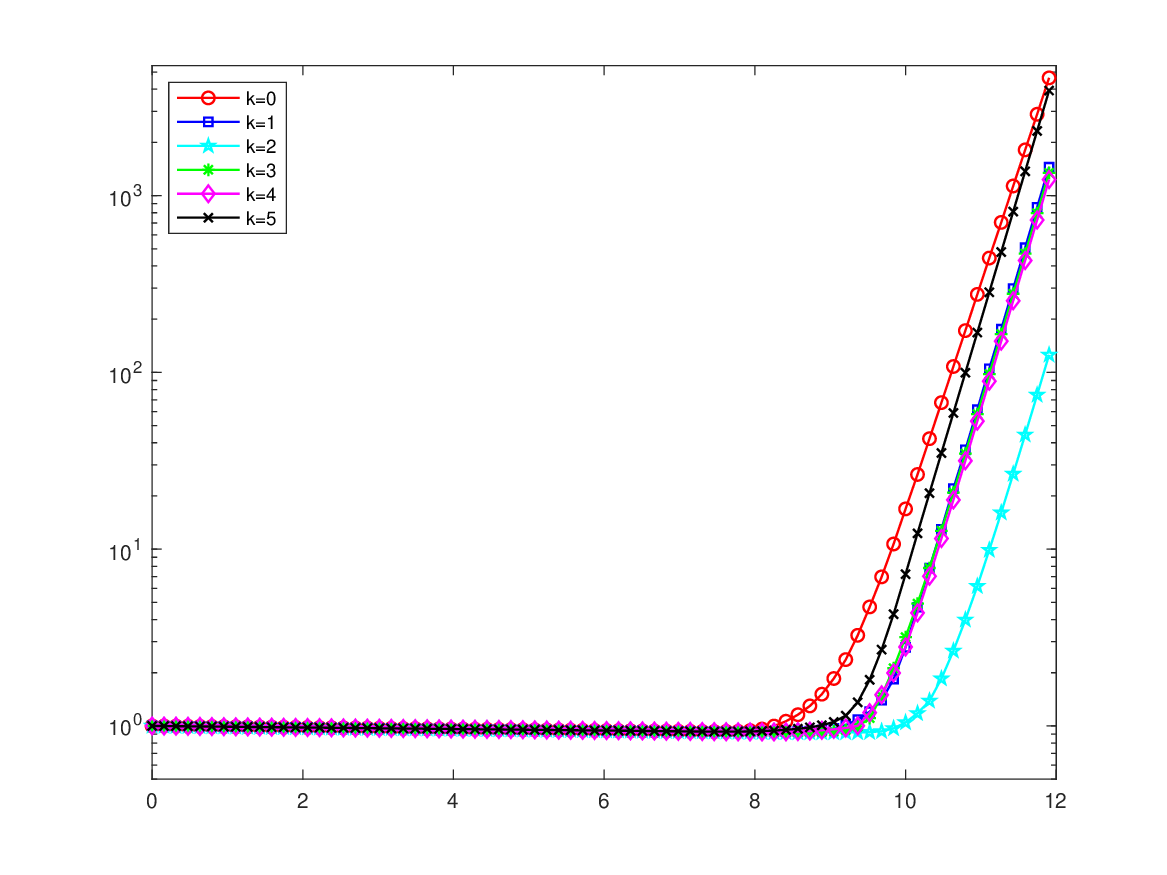}
  \caption{ETD-RK4, $\tau=1.1\times\tau_0 \frac{d}{a^2}$}
 \end{subfigure}
 \caption{\textbf{Example \ref{ex:adaptive}. $p$ variation test.} 
The growth of the maximum norm of the solutions for the advection-dominated problem over time using different ETD-RKDG methods.
A uniform mesh with $h=\frac{\pi}{1000}$ is employed. 
Nonuniform polynomials of degrees $r$ and $k$ are used on the cells $I_{2m-1}$ and $I_{2m}$, respectively, for $ m=1,2,\ldots,1000$ in the ETD-RK$r$ methods, where $r=1,2,3,4$ and $k=0,1,\ldots,5$. 
The time-step sizes are set to the stable values $\tau=\tau_0\frac{d}{a^2}$ in the left column and the unstable values $\tau=1.1\times\tau_0\frac{d}{a^2}$ in the right column, where $\tau_0 = 2, 3.93, 4.55$, and $4.81$ for ETD-RK1, ETD-RK2, ETD-RK3, and ETD-RK4, respectively.}
 \label{fig:p_adaptive}
\end{figure}

\begin{exmp}\textbf{Nonlinear equations test}\label{ex:Nonlinear}
\end{exmp}
Although the analysis in this paper focuses on linear cases \eqref{eq:AdvDiffEq}, we also test performance of the ETD-RKDG methods on nonlinear advection cases 
\begin{equation*}
u_t+f(u)_x=du_{xx}
\end{equation*}
to evaluate the generality of the conclusions obtained.
Two advection-dominated examples are tested, with the diffusion coefficient $d=0.01$, the computational domain $\Omega=[-1,1]$, and periodic boundary conditions.
The first example is the viscous Burgers equation with the convex flux function
\begin{equation*}
f(u)=\frac{u^2}{2},
\end{equation*}
and the smooth initial condition $u(x,0)=0.25+0.5\sin(\pi x)$.
The second example is the viscous Buckley--Leverett equation with the non-convex flux function
\begin{equation*}
f(u)=\frac{4u^2}{4u^2+(1-u)^2},
\end{equation*}
and the non-smooth initial condition $u=1$ in $[-\frac12,0]$ and $u=0$ elsewhere. 

Both examples are computed on a uniform grid with $N=2000$ using the ETD-RK4 and $\mathbb{P}^3$-DG method.
We adopt the same pattern for the time-step size, $\tau=\tau_0\frac{d}{a^2}$, as before, where $\tau_0=4.81$ for the ETD-RK4 method, $d=0.01$ is the diffusion coefficient, and $a=\max_{u}|f'(u)|$ represents the maximum wave speed.
Specifically, we take $a=\max_{u\in[-0.25, 0.75]}|f'(u)|=0.75$ for the Burgers equation and $a=\max_{u\in[0,1]}|f'(u)|=2.333$ for the Buckley--Leverett equation.

The results for the Burgers equation at $T=2$ and the Buckley--Leverett equation at $T=0.4$ are presented in Figure \ref{fig:Nonlinear}, alongside reference solutions obtained using the SSP-RK3 eighth-order multi-resolution A-WENO scheme \cite{jiang2021high} on the same grid with a sufficiently small time-step size.
As shown in the figure, even though the flux functions are nonlinear or even non-convex and the solutions contain large gradients, the numerical results remain stable and align well with the reference solutions, despite the vary large time-step sizes $\frac{\tau}{h}\approx 85$ for the Burgers equation and $\frac{\tau}{h}\approx 9$ for the Buckley--Leverett equation.

\begin{figure}[!htbp]
 \centering
 \begin{subfigure}[b]{0.45\textwidth}
  \includegraphics[width=\textwidth]{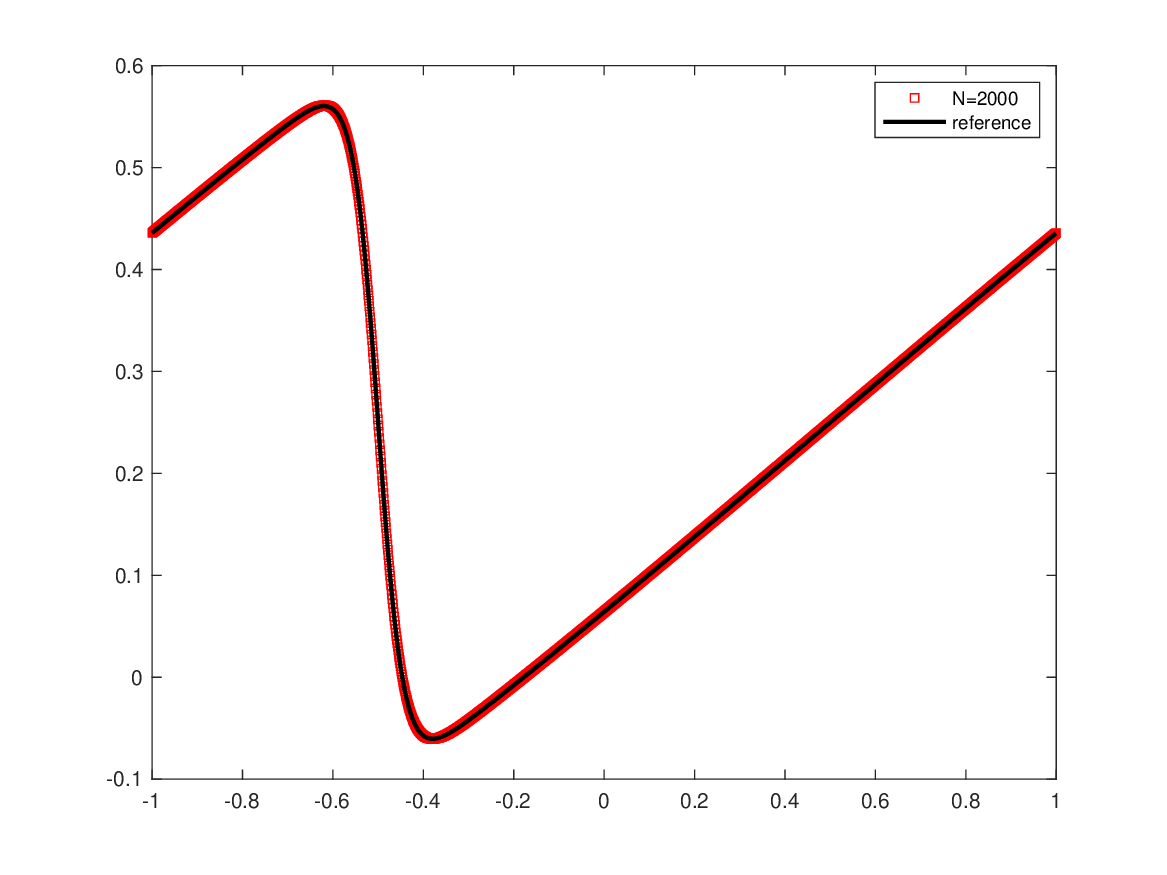}
  \caption{Burgers equation at $T=2$}
 \end{subfigure}
 \begin{subfigure}[b]{0.45\textwidth}
  \includegraphics[width=\textwidth]{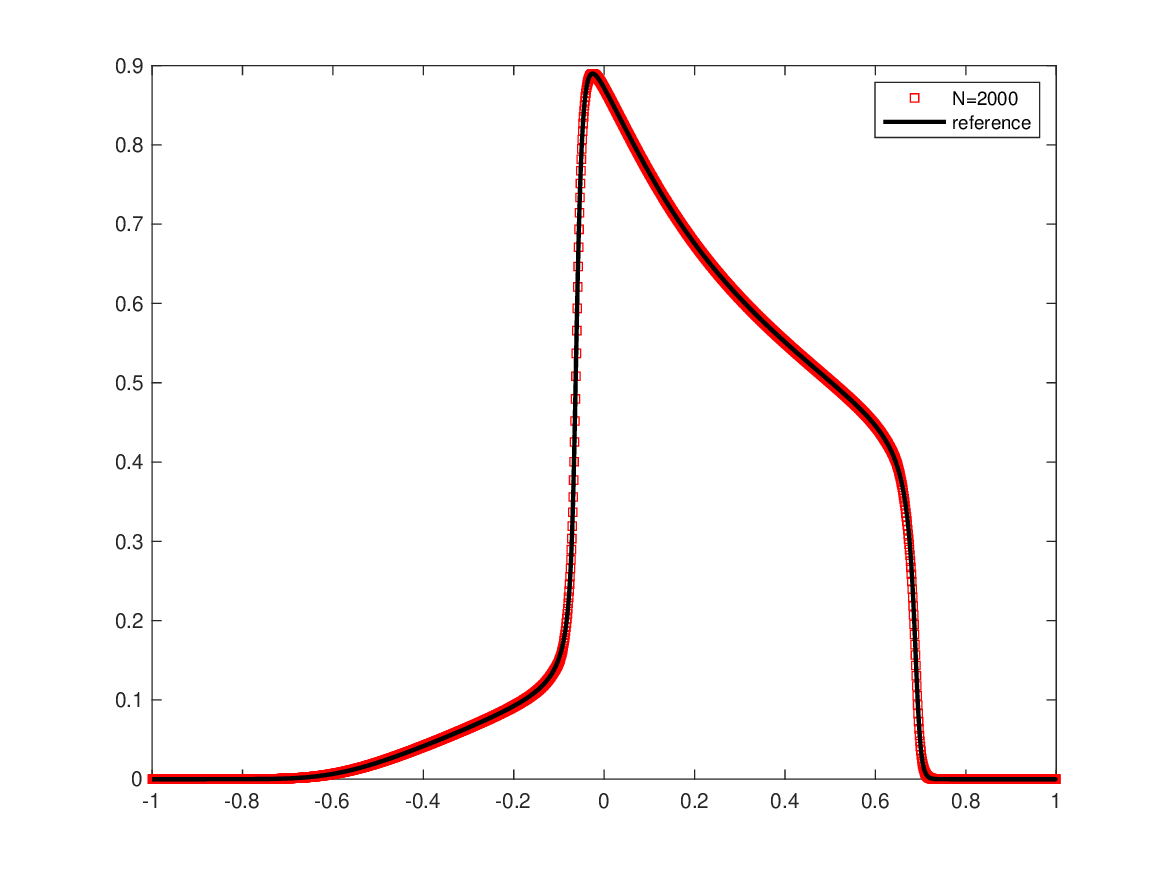}
  \caption{Buckley--Leverett equation at $T=0.4$}
 \end{subfigure}
 \caption{\textbf{Example \ref{ex:Nonlinear}. Nonlinear equations test.}
 Numerical results of the viscous Burgers and viscous Buckley--Leverett equations obtained from the ETD-RK4 and $\mathbb{P}^3$-DG method.}
 \label{fig:Nonlinear}
\end{figure}

\begin{exmp}\textbf{Two-dimensional test}\label{ex:BL2D}
\end{exmp}
In this example \cite{jiang2021high}, we test the ETD-RKDG methods on a two-dimensional viscous Buckley--Leverett equation
\begin{equation*}
u_t+f_1(u)_x+f_2(u)_y=d\Delta u,
\end{equation*}
where 
\begin{equation*}
f_1(u)=\frac{u^2}{u^2+(1-u)^2}\quand f_2(u)=\frac{u^2}{u^2+(1-u)^2}\left(1-5(1-u)^2\right)
\end{equation*}
are the non-convex fluxes representing, respectively, the directions without and with gravitational effects in two-phase porous media flows.
As before, we let $d=0.01$ so that the equation is advection-dominated.
The initial condition is given by
\begin{equation*}
u(x,y,0)=\begin{cases}
1,\quad x^2+y^2<\frac12,\\
0,\quad \text{otherwise},
\end{cases}
\end{equation*}
on the computational domain $\Omega=[-1.5,1.5]^2$ with periodic boundary conditions.

The example is computed on a uniform grid with $N\times M=600\times600$ square cells using the ETD-RK4 and $\mathbb{Q}^3$-DG method.
We adopt the time-step size $\tau=\tau_0\frac{d}{a^2}$ for this two-dimensional case, where $\tau_0=4.81$ is the constant identified for the ETD-RK4 method, $d=0.01$ is the diffusion coefficient, and $a=\sup_{u\in[0,1]}\sqrt{f_1'(u)^2+f_2'(u)^2}=\sqrt{13.37}$ represents the maximum wave speed in the domain.

The result at $T=0.5$ is shown in Figure \ref{fig:BL2D}, demonstrating stability and good agreement with the results of the well-established SSP-RK3 A-WENO method in \cite{jiang2021high}, with a significantly larger time-step size.

\begin{figure}[!htbp]
 \centering
 \begin{subfigure}[b]{0.45\textwidth}
  \includegraphics[width=\textwidth]{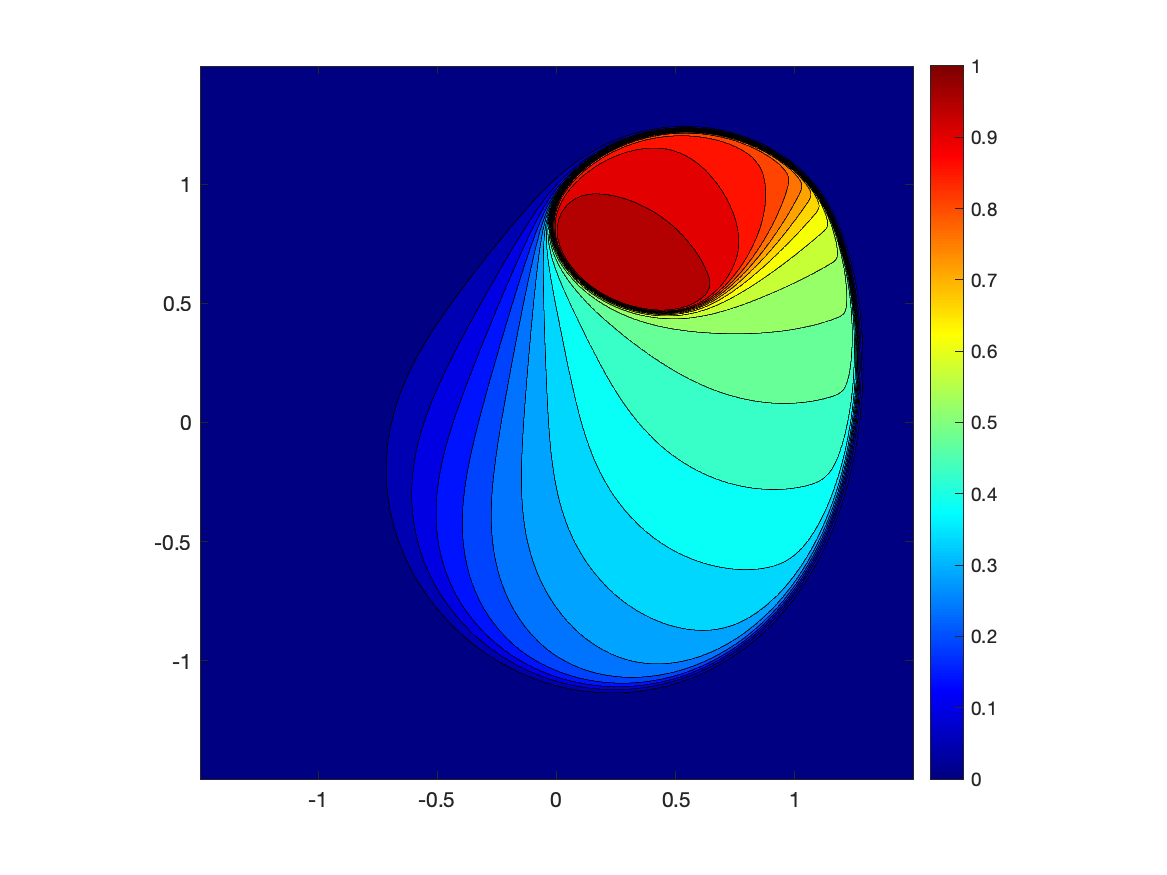}
  \caption{Contour at $T=0.5$}
 \end{subfigure}
 \begin{subfigure}[b]{0.45\textwidth}
  \includegraphics[width=\textwidth]{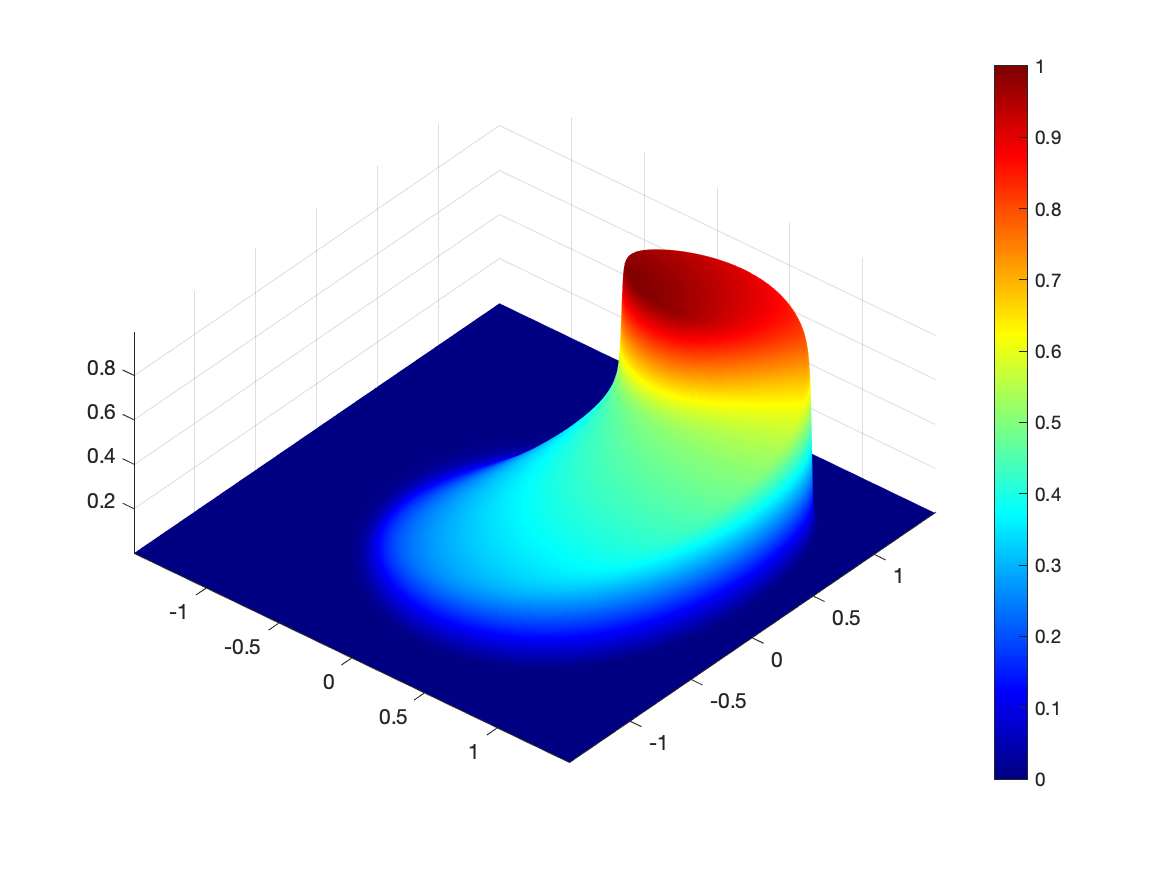}
  \caption{Surface at $T=0.5$}
 \end{subfigure}
 \caption{\textbf{Example \ref{ex:BL2D}. Two-dimensional test.}
 Numerical results of the two-dimensional viscous Buckley--Leverett equation obtained from the ETD-RK4 and $\mathbb{Q}^3$-DG method.}
 \label{fig:BL2D}
\end{figure}

\section{Conclusions}\label{sec:Remarks}
In this paper, we consider the fully discrete ETD-RKDG schemes for solving the advection-diffusion equations \eqref{eq:AdvDiffEq}. The stability and time-step constraints of the schemes are analyzed using the Fourier method. We find that for the DG method with central flux for the advection term, the ETD-RKDG schemes are stable if $\tau \leq \tau_0 \frac{d}{a^2}$, while for the DG method with upwind flux for the advection term, the ETD-RKDG schemes are stable if $\tau \leq \max\left\{\tau_0 \frac{d}{a^2}, c_0 \frac{h}{a}\right\}$, with $c_0$ being the CFL number of explicit RKDG methods for the purely advection equation. 
The value of \(\tau_0\) depends on the specific ETD-RK method. 
However, for a given ETD-RK method, \(\tau_0\) of the fully discrete ETD-RKDG scheme coincides with that of the semi-discrete ETD-RK scheme and appears to be independent of the choice of DG method and polynomial degrees in the spatial discretization.
The stability conditions are proved for the lowest-order case and are found numerically using the Fourier method for higher-order cases.

\begin{appendices}

\appendix

\section{Results for IMEX-RKDG schemes}\label{app:IMEX}
We consider the following IMEX-RK time discretization schemes of orders ranging from first to third: ARS(1,1,1), ARS(2,2,2), and ARS(4,4,3), as proposed in \cite{ascher1997implicit}. 
These schemes were analyzed in \cite{wang2023uniform}. 
However, the values of $\tau_0$ and their insensitivity to spatial discretization have not been explored.

\begin{itemize}
   \item IMEX-RK1 (first-order):
   \begin{equation}\label{eq:IMEXRK1}
   \begin{split}       \mathbf{u}^{n+1}=&(I-\tau D)^{-1}\left( \mathbf{u}^n+\tau F(\mathbf{u}^n) \right),
   \end{split}
   \end{equation}
   \item IMEX-RK2 (second-order):
   \begin{equation*}
   \begin{split}
       \mathbf{a}^n=&(I-\tau\gamma D)^{-1}\left( \mathbf{u}^n+\tau\gamma F(\mathbf{u}^n) \right),\\
       \mathbf{u}^{n+1}=&(I-\tau\gamma D)^{-1}\left( \mathbf{u}^n+\tau \delta F(\mathbf{u}^n)+\tau(1-\delta)F(\mathbf{a}^n)+\tau(1-\gamma)D \mathbf{a}^n \right),
   \end{split}
   \end{equation*}
    \item IMEX-RK3 (third-order):
    \begin{equation*}
    \begin{split}
        \mathbf{a}^n=(I-\frac{\tau}{2}D)^{-1}&\left( \mathbf{u}^n+\frac{\tau}{2}F(\mathbf{u}^n)  \right),\\
        \mathbf{b}^n=(I-\frac{\tau}{2}D)^{-1}&\left( \mathbf{u}^{n}+\tau(\frac{11}{18}F(\mathbf{u}^n)+\frac{1}{18}F(\mathbf{a}^n)+\frac{1}{6}D\mathbf{a}^n) \right),\\
        \mathbf{c}^n=(I-\frac{\tau}{2}D)^{-1}&\left( \mathbf{u}^n+\tau (\frac{5}{6}F(\mathbf{u}^n)-\frac{5}{6}F(\mathbf{a}^n)+\frac12 F(\mathbf{b}^n)-\frac12 D\mathbf{a}^n+\frac12 D\mathbf{b}^n)  \right),\\
        \mathbf{u}^{n+1}=(I-\frac{\tau}{2}D)^{-1}&\left( \mathbf{u}^n+\tau( \frac14F(\mathbf{u}^n)+\frac{7}{4}F(\mathbf{a}^n)+\frac34 F(\mathbf{b}^n)-\frac74 F(\mathbf{c}^n)\right.\\
        &\qquad\qquad\left. +\frac32 D\mathbf{a}^n-\frac32 D\mathbf{b}^n+\frac12D\mathbf{c}^n)  \right),\\
    \end{split}
    \end{equation*}


\end{itemize}
where $\gamma=(1-\frac{\sqrt{2}}{2}), \delta=-\frac{\sqrt{2}}{2}$.

Applying \eqref{eq:IMEXRK1} for the lowest-order central scheme \eqref{eq:FD_central}, we obtain
\begin{equation}\label{eq:IMEXRK1-FD}
\mathbf{u}^{n+1}=(I-\tau D)^{-1}\left(I - \tau A \right)\mathbf{u}^n,
\end{equation}
with $D$ and $A$ defined the same as in \eqref{eq:ETD-CentralFD-D} and \eqref{eq:ETD-CentralFD-A}, respectively.
The growth factor $\widehat{G}(\tau,h,\omega)$ for the scheme is given by 
\begin{equation*}
\widehat{G}(\tau,h,\omega)=\frac{1-\ii\frac{2\tau}{h}\sin(\frac{\omega h}{2})\cos(\frac{\omega h}{2})}{1+\frac{4\tau}{h^2}\sin^2(\frac{\omega h}{2})}.
\end{equation*}
The stability and time-step constraint are stated in the following theorem.
\begin{thm}\label{thm:A1}
The scheme \eqref{eq:IMEXRK1-FD}, \eqref{eq:ETD-CentralFD-D} and \eqref{eq:ETD-CentralFD-A} is stable under the time-step constraint $\tau\leq2$, with the growth factor $|\widehat{G}(\tau,h,\omega)|\leq 1$. 
\end{thm}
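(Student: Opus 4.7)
The plan is to work with the modulus squared of the growth factor, which is a rational function of $\tau$, $h$, and $\eta:=\sin^2(\omega h/2)\in[0,1]$. Writing $\sin(\omega h/2)\cos(\omega h/2)$ and $\sin^2(\omega h/2)$ in terms of $\eta$ and $1-\eta$, I would get
\begin{equation*}
|\widehat{G}(\tau,h,\omega)|^2
= \frac{1+\dfrac{4\tau^2}{h^2}\,\eta(1-\eta)}{\left(1+\dfrac{4\tau\eta}{h^2}\right)^{2}}
=: Q(\tau,h,\eta).
\end{equation*}
Unlike the ETD-RK1 case (Theorem \ref{thm:1}), there is no exponential or transcendental piece to contend with, so I do not expect to differentiate in $\eta$; the inequality $Q\leq 1$ should follow directly by cross-multiplying and comparing polynomials in $\eta$.

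Next, the condition $Q(\tau,h,\eta)\leq 1$ is equivalent, after multiplying out the (positive) denominator and subtracting $1$, to
\begin{equation*}
\frac{4\tau^2}{h^2}\,\eta(1-\eta)\;\leq\;\frac{8\tau\eta}{h^2}+\frac{16\tau^2\eta^2}{h^4}.
\end{equation*}
For $\eta=0$ both sides vanish, so I may restrict to $\eta\in(0,1]$, divide through by $\frac{4\tau\eta}{h^2}$ (positive since $\tau>0$), and arrive at the simple reduced inequality
\begin{equation*}
\tau(1-\eta)\;\leq\;2+\frac{4\tau\eta}{h^2}.
\end{equation*}

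The main step, now entirely elementary, is to observe that under $\tau\leq 2$ and $\eta\in[0,1]$ we have $\tau(1-\eta)\leq \tau\leq 2$, while $\frac{4\tau\eta}{h^2}\geq 0$ for all $h>0$. Hence the reduced inequality holds, which gives $Q(\tau,h,\eta)\leq 1$ for every $\eta\in[0,1]$ and every $h>0$, proving $|\widehat{G}(\tau,h,\omega)|\leq 1$ for all Fourier modes. Unlike the ETD analysis, there is no genuine obstacle here: the implicit treatment of diffusion puts a positive $1+4\tau\eta/h^2$ in the denominator, which is what makes the estimate work uniformly in $h$. I would also briefly note that $\tau\leq 2$ appears sharp in the same sense as in Theorem \ref{thm:1}, since the $\eta=1$ (highest-frequency) mode gives $Q=1/(1+4\tau/h^2)^2$ which is fine, but expanding $Q$ near $\eta=0$ shows the leading correction in $\eta$ has sign $\tau(\tau-2)/$(something positive), matching the lowest-order ETD-RK1 threshold.
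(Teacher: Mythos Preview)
Your proof is correct and follows essentially the same approach as the paper: both compute $|\widehat{G}|^2=Q(\tau,h,\eta)$, then show $Q\le 1$ by comparing numerator and denominator, which after factoring out $\frac{4\tau\eta}{h^2}$ reduces to exactly the inequality $\tau(1-\eta)\le 2+\frac{4\tau\eta}{h^2}$ you derived. The only cosmetic difference is that the paper writes the comparison as $R(\tau,h,\eta):=\text{numerator}-\text{denominator}=\frac{4\tau\eta}{h^2}\bigl(\tau-2-\tau\eta-\frac{4\tau\eta}{h^2}\bigr)\le 0$ in one line, while you divide through step by step; your added sharpness remark near $\eta=0$ is correct and goes slightly beyond what the paper records for this theorem.
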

\begin{proof}
One can calculate that 
\begin{equation*}
\begin{split}
|\widehat{G}(\tau,h,\omega)|^2&=\frac{1+\frac{4\tau^2}{h^2}\eta(1-\eta)}{\left( 1+\frac{4\tau}{h^2}\eta \right)^2}\\
&=:Q(\tau,h,\eta),\quad \text{where } \eta=\sin^2(\frac{\omega h}{2})\in[0,1].    
\end{split}
\end{equation*}
With direct computation, one can get
\begin{equation*}
\begin{split}
R(\tau,h,\eta)&:=1+\frac{4\tau^2}{h^2}\eta(1-\eta) - \left( 1+\frac{4\tau}{h^2}\eta \right)^2\\
&=\frac{4\tau\eta}{h^2}\left( \tau-2-\tau\eta-\frac{4\tau\eta}{h^2} \right) \leq 0\quad\forall \eta\in[0,1], h>0, \tau \leq 2.
\end{split}
\end{equation*}
Therefore, we have $Q(\tau,h,\eta)\leq 1$, which completes the proof.
\end{proof}
\begin{coro}
    The central scheme  for \eqref{eq:AdvDiffEq},    \begin{equation*}
\frac{\dd u_{j}}{\dd t} + a\frac{u_{j+1}-u_{j-1}}{2h} = d\frac{u_{j+1}-2u_{j}+u_{j-1}}{h^2},
\end{equation*}
with the IMEX-RK1 time discretization is stable under the time-step constraint $\tau\leq2\frac{d}{a^2}$.
\end{coro}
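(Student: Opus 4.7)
The plan is to deduce this corollary directly from Theorem \ref{thm:A1} via the nondimensionalization described in Section \ref{sec:dimless}, without reproving any Fourier estimates. First I would introduce the rescaled variables $t' = (a^2/d)\,t$ and $x' = (a/d)\,x$, compute $u_{t} = (a^2/d)\,u_{t'}$, $u_x = (a/d)\,u_{x'}$, and $u_{xx} = (a/d)^2\,u_{x'x'}$, and verify that the physical advection--diffusion equation \eqref{eq:AdvDiffEq} becomes the dimensionless equation \eqref{eq:AdvDiffEq_Dimless}.

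Next, I would check that the same substitution, together with the associated rescaling $h' = (a/d)h$, turns the general-coefficient central finite difference scheme in the statement into the dimensionless scheme \eqref{eq:FD_central} that Theorem \ref{thm:A1} addresses. In particular, the factor $a$ in front of the central advection difference and the factor $d$ in front of the Laplacian are both absorbed into the unit coefficients, so the IMEX-RK1 discretization in physical variables matches \eqref{eq:IMEXRK1-FD}--\eqref{eq:ETD-CentralFD-A} in the primed variables, with dimensionless step $\tau' = (a^2/d)\,\tau$.

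Finally, Theorem \ref{thm:A1} gives stability (in the Fourier sense $|\widehat{G}|\le 1$) under $\tau' \leq 2$. Pulling this inequality back through $\tau' = (a^2/d)\tau$ yields the claimed constraint $\tau \leq 2 d/a^2$, which completes the argument. There is no real obstacle here; the only thing to be careful about is making the change of variables consistent across the time derivative, the advection term, and the diffusion term simultaneously, so that the two dimensional factors $a/d$ and $(a/d)^2$ exactly cancel the physical coefficients $a$ and $d$.
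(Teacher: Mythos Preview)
Your proposal is correct and matches the paper's own approach: the corollary is obtained directly from Theorem~\ref{thm:A1} via the nondimensionalization of Section~\ref{sec:dimless}, with the step-size relation $\tau' = (a^2/d)\tau$ converting the dimensionless constraint $\tau'\le 2$ into $\tau\le 2d/a^2$.
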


Similarly, applying \eqref{eq:IMEXRK1} for the lowest-order upwind scheme \eqref{eq:FD_upwind}, we obtain \eqref{eq:IMEXRK1-FD}
with $D$ and $A$ defined the same as in \eqref{eq:ETD-CentralFD-D} and \eqref{eq:ETD-UpwindFD-A}, respectively.
The growth factor $\widehat{G}(\tau,h,\omega)$ for the scheme is given by 
\begin{equation*}
\widehat{G}(\tau,h,\omega)=\frac{1-\frac{2\tau}{h}\sin^2(\frac{\omega h}{2})-\ii\frac{2\tau}{h}\sin(\frac{\omega h}{2})\cos(\frac{\omega h}{2})}{1+\frac{4\tau}{h^2}\sin^2(\frac{\omega h}{2})}.
\end{equation*}
The stability and time-step constraint are stated in the following theorem.
\begin{thm}\label{thm:A2}
The scheme \eqref{eq:IMEXRK1-FD}, \eqref{eq:ETD-CentralFD-D} and \eqref{eq:ETD-UpwindFD-A} is stable under the time-step constraint $\tau\leq2+h$, with the growth factor $|\widehat{G}(\tau,h,\omega)|\leq 1$. 
\end{thm}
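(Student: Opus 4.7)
The plan is to mirror the argument used in Theorem \ref{thm:1}, reducing the stability condition $|\widehat{G}(\tau,h,\omega)|\leq 1$ to a purely algebraic inequality in the single variable $\eta=\sin^2(\omega h/2)\in[0,1]$ (with $h>0$, $\tau>0$ as parameters). Because the IMEX-RK1 scheme is a rational (rather than exponential) function of the operators, the resulting expression should be considerably simpler than in the ETD case, and no exponential monotonicity arguments of the $Q_1,Q_2,\overline{Q},\widetilde{Q}$ type will be needed.

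First I would compute $|\widehat{G}(\tau,h,\omega)|^2$. Separating the real and imaginary parts of the numerator and using $\sin^2(\omega h/2)\cos^2(\omega h/2)=\eta(1-\eta)$, the cross term $\frac{4\tau^2}{h^2}\eta^2$ coming from $(1-\tfrac{2\tau}{h}\eta)^2$ cancels against a term from $\frac{4\tau^2}{h^2}\eta(1-\eta)$, leaving
\begin{equation*}
|\widehat{G}(\tau,h,\omega)|^2=\frac{1+\tfrac{4\tau\eta(\tau-h)}{h^2}}{\left(1+\tfrac{4\tau\eta}{h^2}\right)^2}=:Q(\tau,h,\eta).
\end{equation*}

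Next I would introduce the auxiliary variable $s=\tfrac{4\tau\eta}{h^2}\geq 0$ and rewrite $Q(\tau,h,\eta)\leq 1$ as $s(\tau-h)\leq 2s+s^2$. The case $s=0$ (i.e.\ $\eta=0$) holds trivially with equality, and for $s>0$ the inequality is equivalent to $\tau\leq 2+h+s$. Since $s\geq 0$, the single condition $\tau\leq 2+h$ is sufficient for all $\eta\in[0,1]$ and $h>0$, which is exactly the claimed bound; this completes the proof.

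There is no substantial obstacle: the calculation is algebraically shorter than those in Theorems \ref{thm:1} and \ref{thm:2} because the implicit treatment of the diffusion places the stiff term in the denominator, which is manifestly positive and bounded below by $1$. The only subtle observation worth recording is that the bound $\tau\leq 2+h$ is essentially sharp: the margin $s$ that allows $\tau$ to exceed $2+h$ vanishes in the limit $\eta\to 0^+$, so any constraint of the form $\tau\leq 2+h-\epsilon$ with $\epsilon>0$ independent of $h$ would fail for sufficiently small $\eta$.
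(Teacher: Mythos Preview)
Your proof is correct and follows essentially the same route as the paper: both arguments compute the numerator minus the denominator of $|\widehat{G}|^2$ and reduce stability to the linear inequality $\tau-2-h-\tfrac{4\tau\eta}{h^2}\leq 0$; your substitution $s=\tfrac{4\tau\eta}{h^2}$ and preliminary simplification of the numerator to $1+s(\tau-h)$ are cosmetic variations of the paper's direct expansion of $R(\tau,h,\eta)$. One small slip in your closing remark on sharpness: you wrote that a constraint $\tau\leq 2+h-\epsilon$ ``would fail for sufficiently small $\eta$,'' but a \emph{stronger} constraint cannot fail --- you mean that the \emph{weakened} constraint $\tau\leq 2+h+\epsilon$ would fail, since for $\tau=2+h+\epsilon$ and $s<\epsilon$ (i.e.\ $\eta$ small) one gets $Q>1$.
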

\begin{proof}
One can calculate that 
\begin{equation*}
\begin{split}
|\widehat{G}(\tau,h,\omega)|^2&=\frac{(1-\frac{2\tau}{h}\eta)^2+\frac{4\tau^2}{h^2}\eta(1-\eta)}{\left( 1+\frac{4\tau}{h^2}\eta \right)^2}\\
&=:Q(\tau,h,\eta),\quad \text{where } \eta=\sin^2(\frac{\omega h}{2})\in[0,1].    
\end{split}
\end{equation*}
With direct computation, one can get
\begin{equation*}
\begin{split}
R(\tau,h,\eta)&:=(1-\frac{2\tau}{h}\eta)^2+\frac{4\tau^2}{h^2}\eta(1-\eta) - \left( 1+\frac{4\tau}{h^2}\eta \right)^2\\
&=\frac{4\tau\eta}{h^2}\left( \tau-2-h-\frac{4\tau\eta}{h^2} \right) \leq 0\quad\forall \eta\in[0,1], h>0, \tau \leq 2+h.
\end{split}
\end{equation*}
Therefore, we have $Q(\tau,h,\eta)\leq 1$, which completes the proof.
\end{proof}
\begin{coro}
    The upwind scheme for \eqref{eq:AdvDiffEq},    \begin{equation*}
\frac{\dd u_{j}}{\dd t} + a\frac{u_{j}-u_{j-1}}{h} = d\frac{u_{j+1}-2u_{j}+u_{j-1}}{h^2},
\end{equation*}
with IMEX-RK1 time discretization is stable under the time-step constraint $\tau \leq 2\frac{d}{a^2}+\frac{h}{a}$.
\end{coro}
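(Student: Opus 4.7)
The plan is to obtain this corollary as a direct consequence of Theorem \ref{thm:A2} via the nondimensionalization described in Section \ref{sec:dimless}. No further analysis of the growth factor is required; the work has already been done in Theorem \ref{thm:A2} for the dimensionless equation.

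First, I would apply the change of variables $t' = \frac{a^2}{d}t$ and $x' = \frac{a}{d}x$ to the advection-diffusion equation \eqref{eq:AdvDiffEq}, which, as shown in Section \ref{sec:dimless}, transforms it into the dimensionless form \eqref{eq:AdvDiffEq_Dimless}. Under this change of variables, a uniform spatial mesh with physical size $h$ becomes a uniform mesh with dimensionless size $h' = \frac{a}{d}h$, and a physical time-step $\tau$ becomes $\tau' = \frac{a^2}{d}\tau$. Crucially, the physical upwind finite-difference scheme in the statement becomes exactly the dimensionless upwind scheme \eqref{eq:FD_upwind} on the transformed grid, and the IMEX-RK1 time integrator \eqref{eq:IMEXRK1} transforms into itself because it is defined purely in terms of $\tau D$ and $\tau F$.

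Next, I would apply Theorem \ref{thm:A2}, which guarantees that the dimensionless upwind scheme with IMEX-RK1 is stable under $\tau' \leq 2 + h'$. Substituting $\tau' = \frac{a^2}{d}\tau$ and $h' = \frac{a}{d}h$ and solving for $\tau$ gives
\begin{equation*}
\frac{a^2}{d}\tau \leq 2 + \frac{a}{d}h \quad \Longleftrightarrow \quad \tau \leq 2\frac{d}{a^2} + \frac{h}{a},
\end{equation*}
which is precisely the claimed time-step constraint.

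There is essentially no obstacle here: the corollary is a bookkeeping exercise in tracking physical units through the transformation of Section \ref{sec:dimless}, mirroring the derivations of the analogous ETD-RK1 corollaries in Section \ref{sec:LowestOrder}. The only subtlety worth noting is that the stability criterion $|\widehat{G}|\leq 1$ is invariant under the change of variables, since the Fourier modes transform consistently between the physical and dimensionless coordinate systems, so no additional argument is needed.
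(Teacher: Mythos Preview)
Your proposal is correct and matches the paper's approach exactly: the corollary is stated without a separate proof, following immediately from Theorem \ref{thm:A2} via the change of variables $\tau' = \frac{a^2}{d}\tau$, $h' = \frac{a}{d}h$ of Section \ref{sec:dimless}, just as the analogous ETD-RK corollaries in Section \ref{sec:LowestOrder} are derived.
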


\begin{rem} 
The time-step constraint $\tau \leq 2\frac{d}{a^2} + \frac{h}{a}$ was also derived in \cite{wang2023uniform}; see the end of Section 5 therein.
This condition is less restrictive compared to that of the ETD-RK1 method with the upwind flux. Our numerical tests have shown that the ETD-RK1 method exhibits $|\widehat{G}(\tau,h,\omega)| > 1$ with the time step $\tau = 2\frac{d}{a^2} + \frac{h}{a}$. 
\end{rem}
Following a similar approach as in Section \ref{sec:HigherOrder}, we can analyze the stability and time-step constraints of the semi-discrete and fully discrete high-order IMEX-RK schemes using Fourier methods. The proofs and calculations are omitted, as they follow similar steps to the analysis of the ETD-RKDG methods in Section \ref{sec:HigherOrder} and are not the  focus of this paper.

In summary, we have obtained the following results:
\begin{itemize}
    \item For the semi-discrete IMEX-RK schemes, the growth factor satisfies $|\widehat{G}(\tau,\xi)|\leq 1, \forall \xi\in\mathbb{R}$ if $\tau\leq\tau_0$. 
    The values of $\tau_0$ for various IMEX-RK methods, along with the plots of $|\widehat{G}(\tau_0,\xi)|^2$ versus $\xi$, are shown in Figure \ref{fig:G2vsxi_IMEX}.

    \item For the fully discrete IMEX-RKDG schemes, numerical experiments show that
\begin{equation}
\sup_{\xi\in[-\pi,\pi]}\rho(\widehat{G}(\tau,h,\xi))\leq 1\quad\forall h>0,
\end{equation}
under the time-step constraints $\tau\leq\tau_0\frac{d}{a^2}$ and $\tau\leq\max\{\tau_0\frac{d}{a^2},c_0 \frac{h}{a}\}$ for the central \eqref{eq:CentralFlux} and upwind \eqref{eq:UpwindFlux} DG fluxes, respectively, where the values of $\tau_0$ are provided in Table \ref{tab:tau_0_IMEX}, and $c_0$ is the CFL constant of the explicit RKDG method for the purely advection equation. 
Detailed numerical searches reveal that the exact values of \(\tau_0\) for fully discrete IMEX-RKDG methods coincide with those of the semi-discrete IMEX-RK schemes. 
\end{itemize}

\begin{figure}[!htbp]
 \centering
 \begin{subfigure}[b]{0.25\textwidth}
  \includegraphics[width=\textwidth]{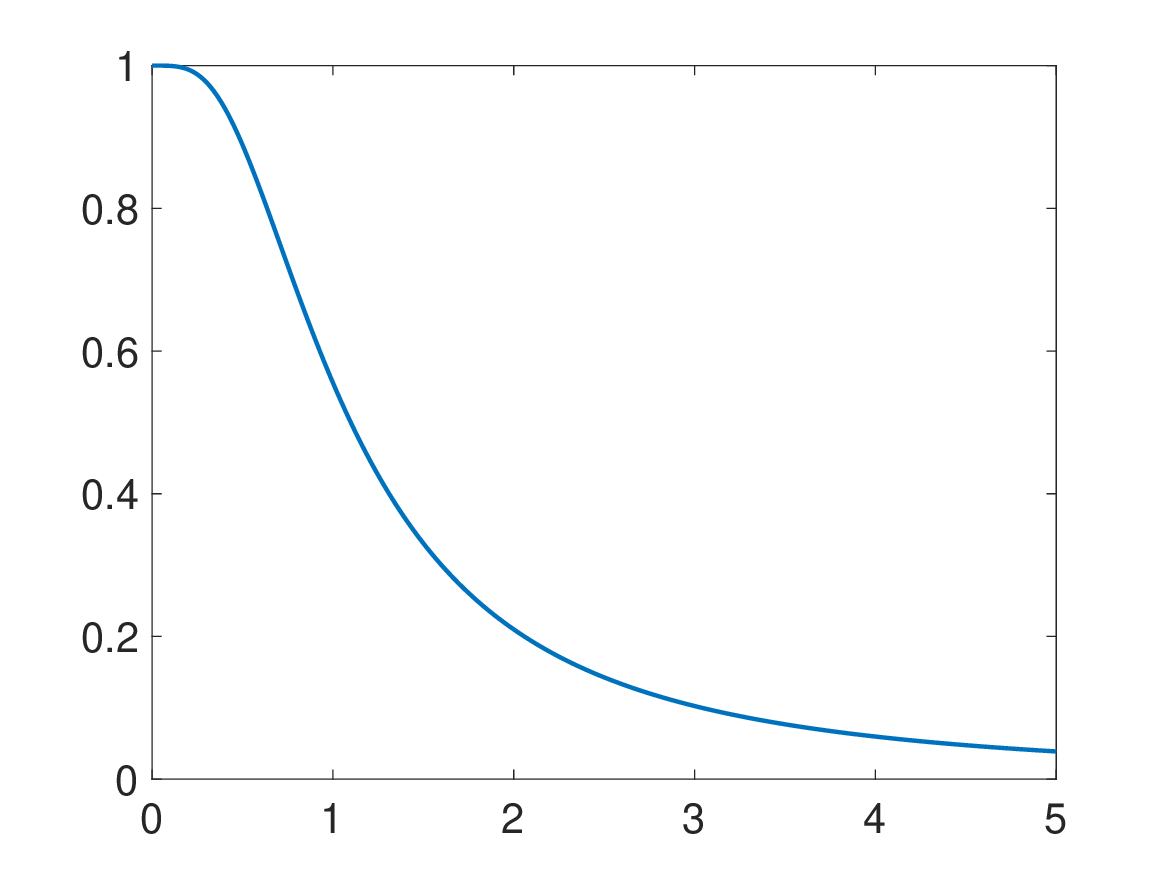}  \caption{IMEX-RK1, $\tau_0=2$}
 \end{subfigure}
  \begin{subfigure}[b]{0.25\textwidth}
  \includegraphics[width=\textwidth]{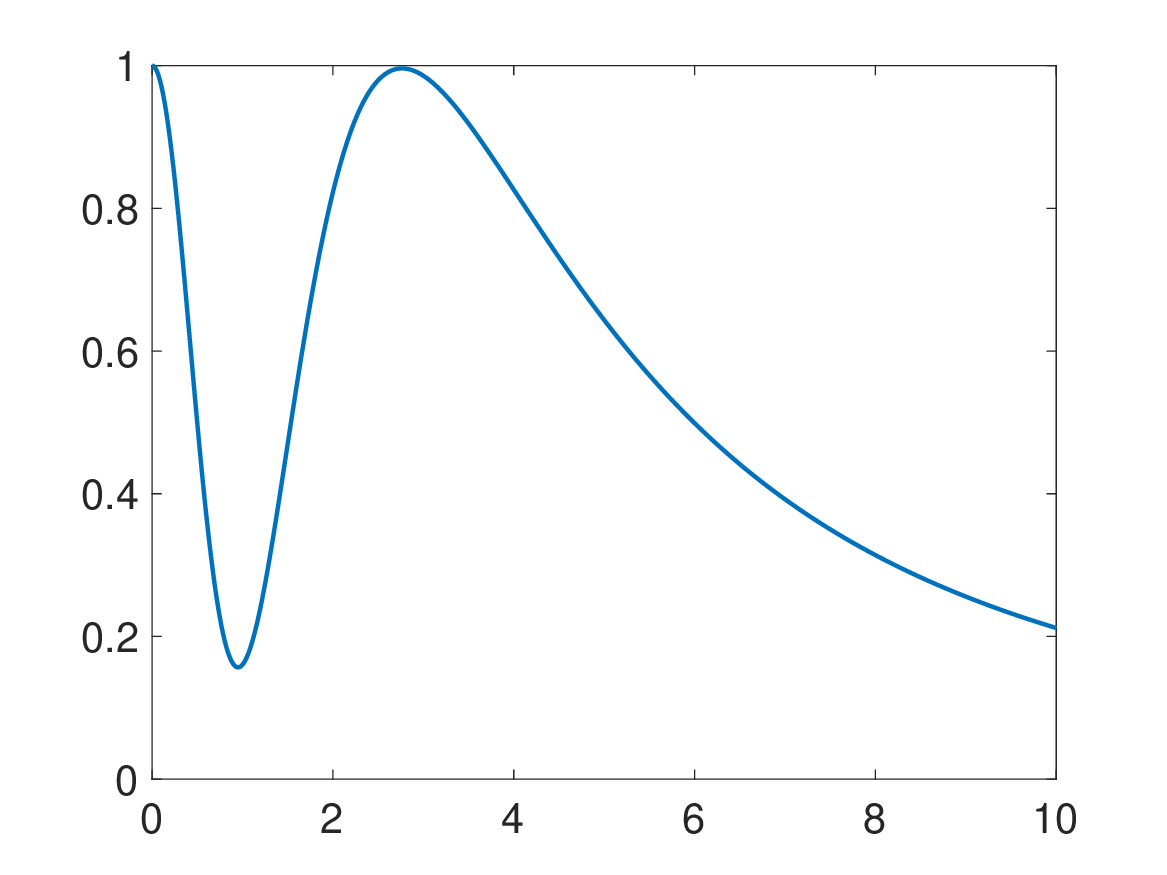}
\caption{IMEX-RK2, $\tau_0=1.38$}
 \end{subfigure}
 \begin{subfigure}[b]{0.25\textwidth}
  \includegraphics[width=\textwidth]{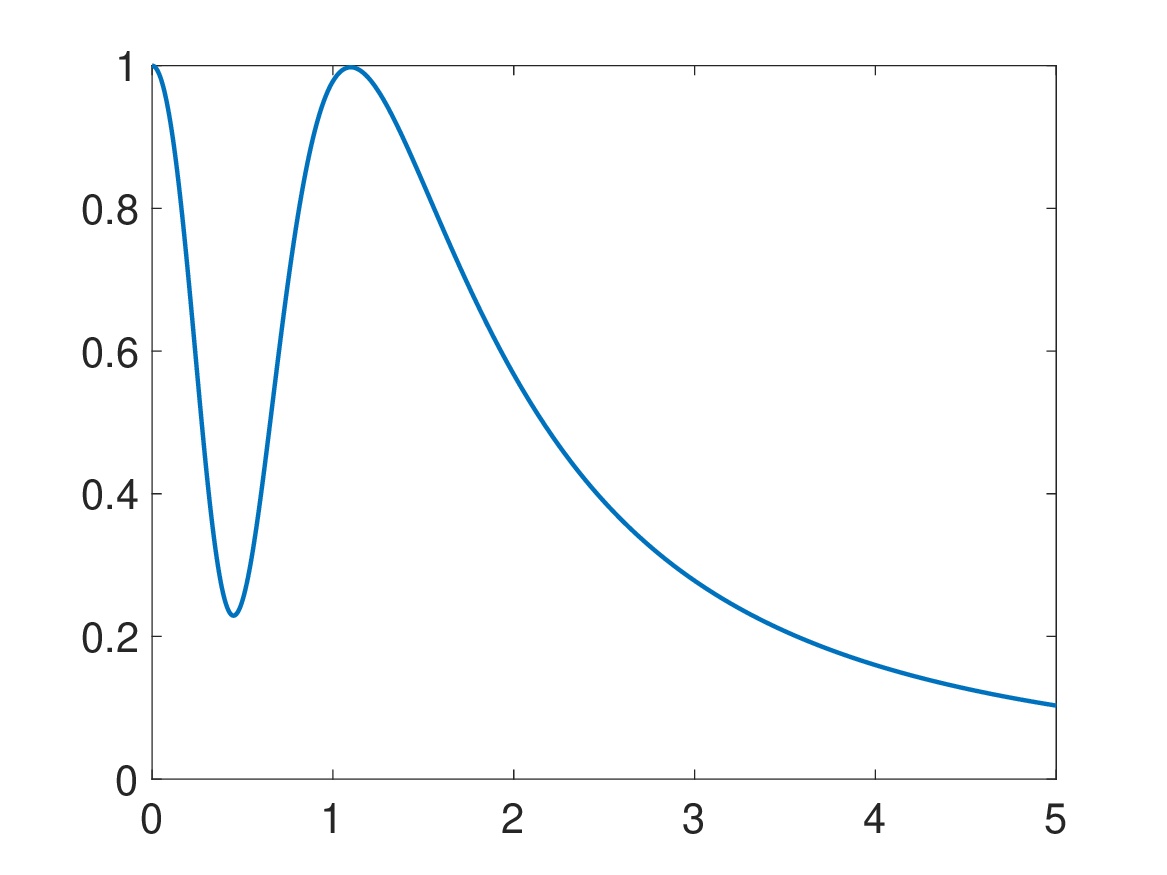}
\caption{IMEX-RK3, $\tau_0=3.89$}
 \end{subfigure}
 \caption{The square of growth factor $|\widehat{G}(\tau_0,\xi)|^2$ versus $\xi$. 
 Since $|\widehat{G}(\tau_0,\xi)|^2$ is an even function with respect to $\xi$, we present only $\xi>0$.
 The value of $\tau_0=2$ for IMEX-RK1 is sharp.
 The searched values of $\tau_0$ for IMEX-RK2 and IMEX-RK3 are valid up to the last digit shown.}
 \label{fig:G2vsxi_IMEX}
\end{figure}

\begin{table}[h!]
\centering
\begin{tabular}{|c|c|c|c|}
\hline
method & IMEX-RK1 & IMEX-RK2 & IMEX-RK3 \\ \hline
$\tau_0$ & 2 & 1.38 & 3.89  \\ \hline
\end{tabular}
\caption{Stable $\tau_0$ for $\sup_{\xi\in[-\pi,\pi]}\rho(\widehat{G}(\tau,h,\xi))\leq 1 \ \forall h>0$.}\label{tab:tau_0_IMEX}
\end{table}
Finally, we visualize the square of the growth factor, $\rho(\widehat{G}(\tau_0,h,\xi))^2$, as a function of $\xi$ for a specific spatial discretization setting: central DG flux for the advection combined with LDG discretization for diffusion, using a $\mathbb{P}^4$ polynomial space and $h = \frac{\pi}{10^{6}}$. 
The results for other spatial discretization choices are close.
From Figure \ref{fig:G2_DG_vsxi_IMEX}, we observe that the pattern of $\rho(\widehat{G}(\tau_0,h,\xi))^2$ closely resembles their semi-discrete counterparts $|\widehat{G}(\tau_0,\xi)|^2$.
\begin{figure}[!htbp]
 \centering
 \begin{subfigure}[b]{0.25\textwidth}
  \includegraphics[width=\textwidth]{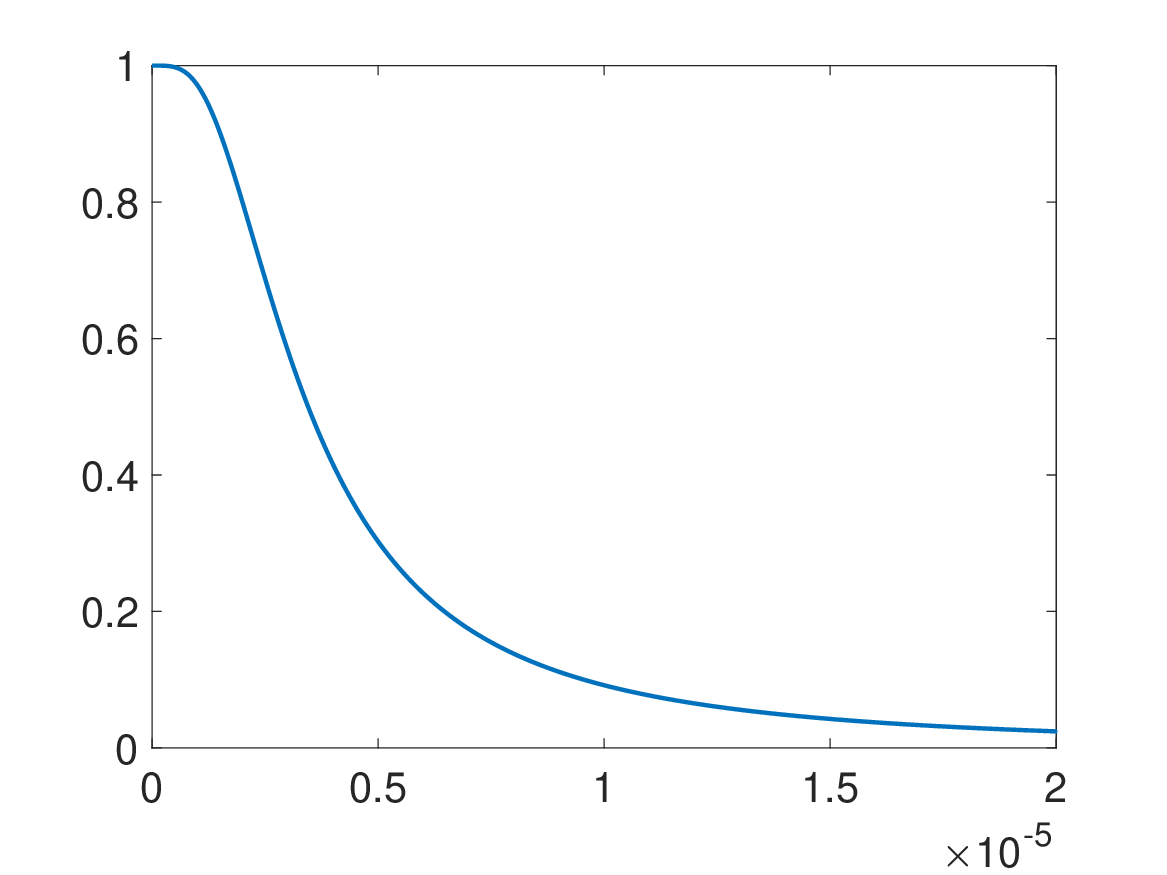}  \caption{IMEX-RK1, $\tau_0=2$}
 \end{subfigure}
  \begin{subfigure}[b]{0.25\textwidth}
  \includegraphics[width=\textwidth]{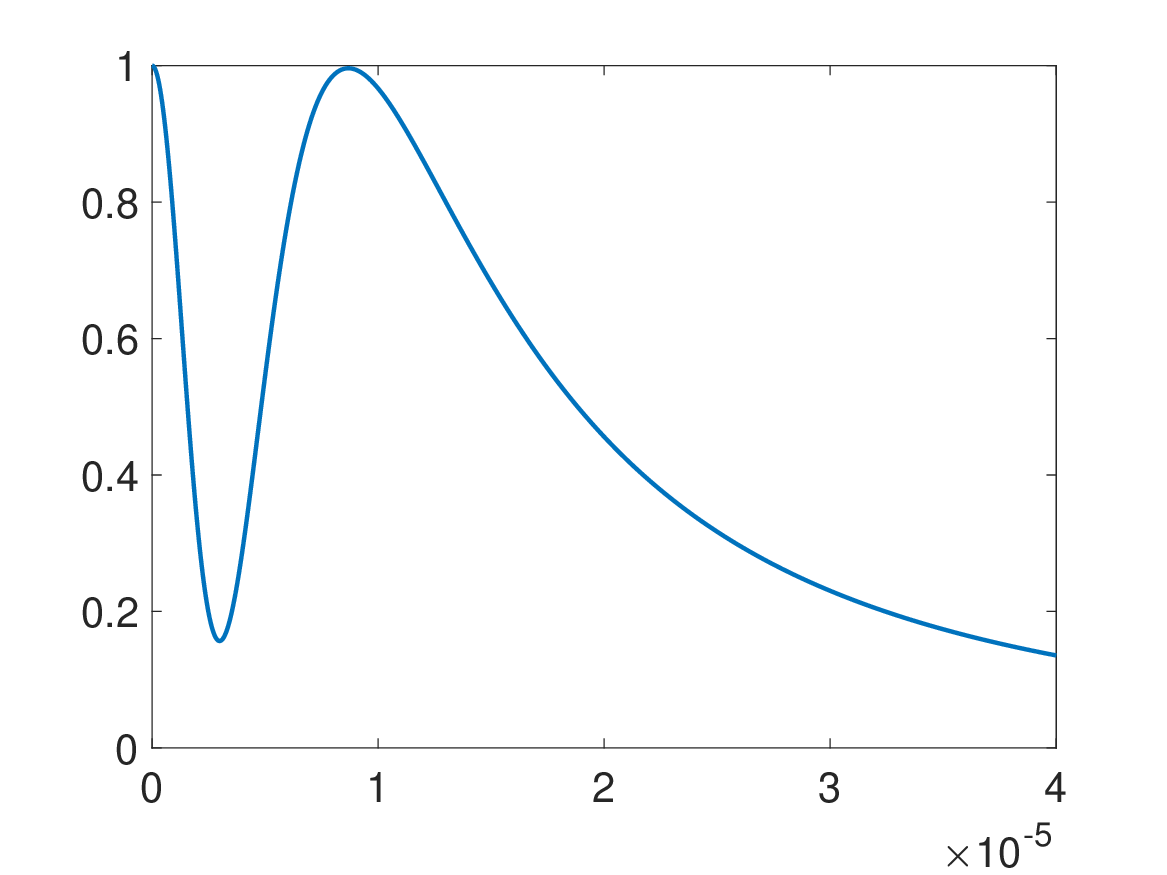}
\caption{IMEX-RK2, $\tau_0=1.38$}
 \end{subfigure}
 \begin{subfigure}[b]{0.25\textwidth}
  \includegraphics[width=\textwidth]{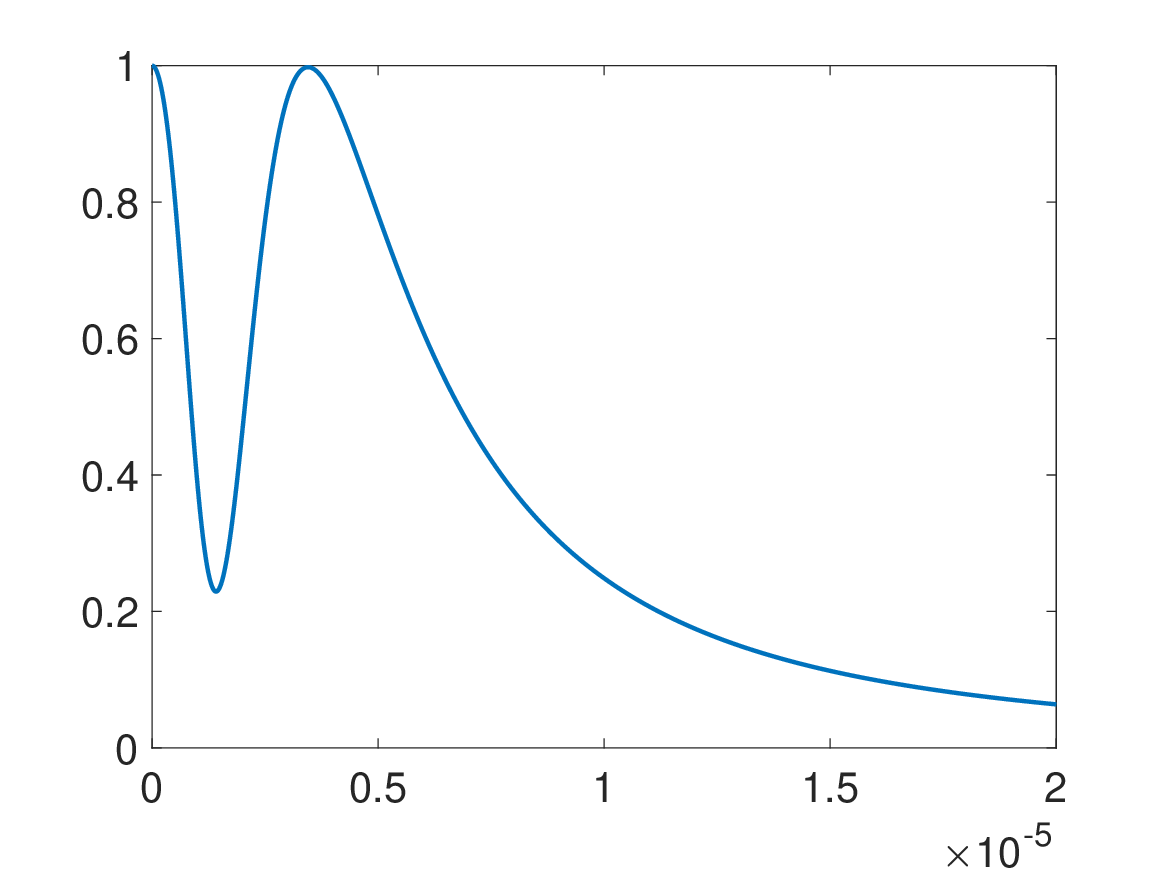}
\caption{IMEX-RK3, $\tau_0=3.89$}
 \end{subfigure}
 \caption{The square of growth factor $\rho(\widehat{G}(\tau_0,h,\xi))^2$ versus $\xi=\omega h$ for a specific spatial discretization setting: central DG flux for the advection combined with LDG discretization for diffusion, using a $\mathbb{P}^4$ polynomial space and $h = \frac{\pi}{10^{6}}$.
 The results for other spatial discretization choices are close.}
 \label{fig:G2_DG_vsxi_IMEX}
\end{figure}
\begin{rem}
A previous study \cite{wang2023uniform} on the stability of IMEX-LDG methods for the linear advection-diffusion equation derived the sufficient condition  
\begin{equation}\label{eq:IMEXCFL}  
\tau \leq \max\left\{\tau_0 \frac{d}{a^2}, \min\left\{c_0 \frac{h}{a}, \rho \frac{h^2}{d} \right\} \right\}.  
\end{equation}  
Here, \(\tau_0\) and \(c_0\) have the same meanings as in this paper, while \(\rho\) is an unknown positive constant independent of \(a\), \(d\), and \(h\). This condition includes an additional constraint and appears more restrictive than our conclusion.

To further investigate the necessity of the additional condition with the $\rho \frac{h^2}{d}$ term, we conducted additional validation using the dimensionless-form equation with \(a = d = 1\). In this case, \eqref{eq:IMEXCFL} simplifies to $\tau \leq \max\{\tau_0, \min\{c_0h,\rho h^2\}\}$, and the $\rho h^2$ condition is activated when $\tau_0 \leq \rho h^2 \leq c_0h$. Note that $\{h : \tau_0 \leq \rho h^2 \leq c_0h\} \subseteq \{h : h \geq \frac{\tau_0}{c_0}\}$. Thus, we tested a wider range of values with \(h \in \left[\frac{\tau_0}{c_0}, \infty\right)\).  
With \(\tau = c_0 h\), we did not observe any instance where \(\rho(\widehat{G}(\tau, h, \xi)) > 1\). This seems to suggest that the $\rho \frac{h^2}{d}$ condition can always be replaced by the $c_0 \frac{h}{a}$ condition for stability.

\end{rem}

\end{appendices}

\bigskip
\noindent{\bf Data Availability}

\noindent No data sets were generated during the current study of this paper.

\bigskip
\noindent{\bf \Large Declarations}

\bigskip
\noindent{\bf Conflict of interest}

\noindent The authors declare that we have no conflict of interest.


\end{document}